    \def\step{
    \@ifnextchar[ \@step{\@noitemargtrue\@step[\@itemlabel]}}
    \def\@step[#1]{
        \item[#1]\textit{}\hspace*{\dimexpr-\labelwidth-\labelsep}
        }
\begin{document}
\title[]{Entire sign-changing solutions to the  fractional critical Schr{\"o}dinger equation}

\author[]{Xingdong Tang}
 \address{\hskip-1.15em Xingdong Tang
 \hfill\newline
 School of Mathematics and Statistics,
 \hfill\newline
 Nanjing University of Information Science and Technology,
 \hfill\newline
 Nanjing, 210044, People's Republic of China.}
 \email{txd@nuist.edu.cn}

\author[]{Guixiang Xu}
 \address{\hskip-1.15em Guixiang Xu
 	 	\hfill\newline Laboratory of Mathematics and Complex Systems (Ministry of Education),
 	\hfill\newline School of Mathematical Sciences,
 	\hfill\newline Beijing Normal University,
 	\hfill\newline Beijing, 100875, People's Republic of China.}
 \email{guixiang@bnu.edu.cn}

\author[]{Chunyan Zhang}
 \address{\hskip-1.15em Chunyan Zhang \hfill\newline School of Mathematical Sciences,
 	\hfill\newline Nanjing Normal University,
 	\hfill\newline Nanjing, 210046, People's Republic of China.}
 \email{chunyan7755@126.com }

\author[]{Jihui Zhang}
 \address{\hskip-1.15em Jihui Zhang \hfill\newline School of Mathematical Sciences,
 	\hfill\newline Nanjing Normal University,
 	\hfill\newline Nanjing, 210046, People's Republic of China.}
 \email{zhangjihui@njnu.edu.cn }

\subjclass[2010]{Primary: 35A15, 35J91; Secondary: 35R11}
\begin{abstract}
We consider the fractional  critical Schr{\"o}dinger equation (FCSE)
\begin{align*}
  \slaplace{u}-\abs{u}^{2^{\ast}_{s}-2}u=0,
\end{align*}
where $u \in \dot H^s( \R^N)$,  $N\geq 2$,  $0<s<1$  and $2^{\ast}_{s}=\frac{2N}{N-2s}$.
By virtue of
    the mini-max theory
   and the concentration compactness principle
    with  the  equivariant group action,
we obtain the new type of non-radial, sign-changing solutions of (FCSE) in the energy space $\dot H^s(\R^N)$. The key component is that we use the equivariant group to partion $\dot H^s(\R^N)$ into several connected components, then combine  the concentration compactness argument  to show the compactness property of Palais-Smale sequences in each component and obtain many solutions of (FCSE) in $\dot H^s(\R^N)$. Both the solutions and the argument here are different from those by
Garrido, Musso in \cite{GM2016pjm}
 and by
Abreu, Barbosa and Ramirez in \cite{ABR2019arxiv}.

\end{abstract}
\keywords{Action of the Equivariant Group; Concentration-compactness Principle;  Fractional Critical Schr\"odinger Equation; Sign-changing Solution.}
\maketitle
\section{Introduction}
This paper is concerned with the existence of sign-changing solutions to
the following fractional critical Schr\"odinger equation
\begin{align}\label{nls}
\begin{cases}
  \slaplace{u}-\abs{u}^{2^{\ast}_{s}-2}u=0, & \mbox{in } \R^N, \\
  u\in \dot{H}^{s}\sts{\R^N}, & \
\end{cases}
\end{align}
where $N\geq2$, $0<s<1$, $2^{\ast}_{s}=\frac{2N}{N-2s}$, $\slaplace$ denotes the usual fractional Laplace operator
and
$\dot{H}^{s}\sts{\R^N}$ denotes the homogenous Sobolev space of real-valued functions whose energy
associated to $ \slaplace $ is finite, i.e.
\begin{equation}\label{hs}
  \dot{H}^{s}\sts{\R^N} =
  \setcdt{ u\in \Scal'\sts{\R^N}
  }{
  \norm{ u }_{\dot{H}^s}<+\infty
    },
\end{equation}
with
$$
    \norm{ u }_{\dot{H}^s}^2 = \int_{\R^N}\abs{ \xi}^2\abs{\sts{\Fcal{u}}\sts{\xi} }^2\d\xi,
$$
where
 $\Fcal{u}$ denotes the Fourier transform of $u$:
\begin{equation*}
  \Fcal{u}\sts{\xi}=\frac{1}{\sts{2\pi}^{ \frac{N}{2} }}\int_{\R^N}u\sts{x}\e^{-\i x\xi}\d x.
\end{equation*}

Fractional Schr\"odinger equations \eqref{nls} arise as models in the fractional quantum mechanics, including
path integral over the L{\'e}vy flights paths (see for instance \cite{Laskin2000pre,Laskin2018,Laskin2000}),
and
as Euler-Lagrange equations for the Hardy-Littlewood-Sobolev inequalities (e.g., see \cite{CT2004jmaa,FL2010cvpde,Lieb1983am}).


The problem about the positive solutions to \eqref{nls} has attracted lots of attention. On the one hand, the existence of positive solutions to \eqref{nls} is  related to
the existence of extremizers to the Hardy-Littlewood-Sobolev inequalities. Lieb considered the following Hardy-Littlewood-Sobolev inequality in \cite{Lieb1983am}
\begin{equation}
\label{liebhls}
  \norm{u}_{L^{\frac{2N}{N-2s}}}^2\leq S\sts{N,s}\norm{ u }_{\dot{H}^{s}\sts{\R^N}}^2,
\end{equation}
and obtained that $\omega_{\mu,\lambda,x_0}$ is the extremizer to \eqref{liebhls} if and only if
\begin{equation}
\label{liebextremals}
  \omega_{\mu,\lambda,x_0}\sts{x}=\frac{ \mu }{\lambda^{\frac{N-2s}{2} }}
  \sts{ \frac{1}{ 1+\frac{ \abs{x-x_0}^2 }{ \lambda^2 } } }^{ \frac{N-2s}{2} },
  \quad
  \mu\neq 0,
  \quad
  \lambda>0
  \text{~~and~~}
  x_0\in\R^N,
\end{equation}
by the layer cake representation technique. Note that \eqref{liebextremals} also solves \eqref{nls} by taking suitable choices of $\mu$. We can refer to \cite{CL1990jfa, CT2004jmaa, FL2010cvpde}, et.al.. for more references.
On the other hand, up to the symmetries of \eqref{nls},
Chen, Li and Ou made use of the moving plane method to show that  \eqref{liebextremals} are the only positive (negative) solutions
to \eqref{nls}  in $L^{ 2N/(N-2s)}_{\text{loc}}\sts{\R^N}$ in \cite{CLO2006cpam}.  Moreover,   D{\'{a}}vila,  del Pino and  Sire  obtained the nondegeneracy of
the extremizer \eqref{liebextremals} for the Hardy-Littlewood-Sobolev inequality \eqref{liebhls} in \cite{DPS2013tams}.

Sign-changing solutions to \eqref{nls}, in the case $s=1$, has been intensively studied in \cite{Clapp2016jde, CS2020cpde, PMP2013adsns, ding1986cmp, HV1994jfa, MMW2019jfa}, et al. As far as the authors known,
there are two different ways to study sign-changing solutions of \eqref{nls}.
On the one hand, Ding  obtained infinitely many sign-changing solutions by making use of variational methods restricted to the space of group invariant functions in \cite{ding1986cmp}. Clapp showed the multiplicity
of sign-changing solutions by making use of minimax argument restricted to the space of group equivariant functions in  \cite{Clapp2016jde}.  We can also refer to \cite{CS2020cpde} for the application in critical Lane-Emden systems.  On the other hand,
del Pino, Musso, Pacard and Pistoia constructed sign-changing solutions
by the Lyapunov–Schmidt reduction argument in \cite{PMPP2011jde, PMP2013adsns, MM2020arxiv}.
Recently, Medina and Musso also constructed
some kind of sign-changing solutions with maximal rank in \cite{MM2020arxiv}.

Our main result  is the following.
\begin{theo}
\label{thm:main}
Let $N=4n+m$ with $n\geq1$ and $m\in\{0,1,2,3\}$. Then for any $0<s<1$, the problem \eqref{nls} has at least $n$ non-radial sign-changing solutions.
\end{theo}

Both the result and the argument in this paper are different from those
in \cite{ABR2019arxiv, GM2016pjm}.  Abreu, Barbosa and Ramirez obtained infinitely many sign-changing solutions of \eqref{nls} by the Ljusternik-Schnirelman type mini-max method and group invariant technique in   \cite{ABR2019arxiv}.
Garrido and Musso  constructed the sign-changing solutions of \eqref{nls}
by the  Lyapunov–Schmidt reduction argument in \cite{GM2016pjm}.  The key idea here is that we use the equivariant group to partion $\dot H^s(\R^N)$ into several connected components, then combine  the concentration compactness argument  to show the compactness property of Palais-Smale sequences in each component and obtain many solutions of \eqref{nls} in $\dot H^s(\R^N)$, where the compactness property of the Palais-Smale sequences is nontrivial for the fractional case $0<s<1$, please see \Cref{sec:ps} for more details.

%
%

%

The remainder of the paper is organized as follows: In \Cref{sec:preliminaries},
we introduce some well-known facts about the  fractional Schr\"odinger equations \eqref{nls}, which will be used throughout the paper. In \Cref{sec:ps}, we present some compactness property of the Palais-Smale sequences. In \Cref{sec:mainresult},
we prove the main result \Cref{thm:main}.

\subsection*{Acknowledgements.}
The authors have been partially supported by the NSF grant of China (No. 11671046, and  No. 11831004).

\section{Preliminaries}
\label{sec:preliminaries}
In this section, we begin with some notation that will be useful throughout this paper.
\subsection{Notation}

There are two ways to define the fractional Laplacian $\slaplace \varphi$ for the real-valued functions
$\varphi\in \dot{H}^{s}\sts{\R^N}$ with $0<s<1$. On the one hand,  the fractional Laplacian of $\varphi$ can be defined by the  Fourier transform as
\begin{equation*}
  \Fcal\sts{\slaplace \varphi}\sts{\xi} = \abs{\xi}^{2s}\sts{\Fcal\varphi}\sts{\xi}.
\end{equation*}
On the other hand, for $\varphi\in \dot{H}^{s}\sts{\R^N}$ with $0<s<1$,
 one can obtain by the fractional heat kernel that
\begin{equation}\label{slaplace}
  \slaplace \varphi\sts{x}
  =
  \frac{1}{C\sts{N,s}}\;
  \int_{\R^N}\frac{ \varphi\sts{x}-\varphi\sts{y} }{ \abs{x-y}^{N+2s} }\d y,
\end{equation}
where
\begin{equation}\label{cstt}
  {C\sts{N,s}} =
   \int_{\R^N}\frac{ 1-\cos\sts{ \eta_1 }}{ \abs{\eta}^{N+2s} }\d\eta.
\end{equation}

Let $O\sts{N}$ be the orthogonal group in $\R^N$, and $G$ be a closed subgroup of the group $O\sts{N}$.
Let $\Z_2=\{1,-1\}$ be the group of 2nd roots of unity, and $\sigma$ be a continuous group homomorphism
from $G$ to $\Z_2$.

For each $x\in\R^N$, let $G\cdot x$ denote the $G-$orbit of the point $x$,
and $G_{x}$ denote the stabilizer subgroup of the group $G$ with respect to the point $x$
, i.e.
\begin{equation*}
  G\cdot x = \setcdt{gx}{g\in G},
  \quad\text{and}\quad
  G_{x}= \setcdt{g\in G}{gx=x}.
\end{equation*}
The domain $\Omega$ in $\R^N$ is said to be \textit{$G$-invariant}, if for each $x\in \Omega$, $G\cdot x\subseteq \Omega$.
For any $G$-invariant domain $\Omega$, we denote
\begin{equation*}
  \Omega^{G}:=
  \setcdt{ x\in\Omega }{  ~~gx=x \text{~~for all~~} g\in G }.
\end{equation*}
Any function $u$ on the $G$-invariant domain $\Omega$ is said to be \textit{$\sigma$-equivariant} if
\begin{equation*}
  u\sts{g x}=\sigma\sts{g}u\sts{x},\quad\text{for all~~} g\in G \text{~~and~~} x\in\Omega.
\end{equation*}

Now for each $s$ with $0<s<1$,
we can obtain the representation for the norm on $\dot{H}^{s}\sts{\R^N}$ by the fractional heat kernel (see \cite{LiebLoss:book, DPV2012, tartar2007} for instance):
\begin{equation}
\label{norm}
    \norm{u}^2
    := \iint_{\R^N\times \R^N}
  \frac{
    \abs{ u\sts{x}-u\sts{y} }^2
  }{
    \abs{x-y}^{N+2s}
  }\dx\dy
  =
  {2 C\sts{N,s}}\int_{\R^N} \abs{\xi}^{2s}\abs{\sts{\Fcal u}\sts{\xi}}^2\d\xi,
\end{equation}
where
$
   {C\sts{N,s}}
$ is defined by \eqref{cstt}.
Moreover, for $0<s<\frac{N}{2}$ and any $u\in \dot{H}^{s}\sts{\R^N}$,  by the Sobolev embedding inequality in \cite{ CT2004jmaa,FJX2018cvpde,FL2010cvpde, LiebLoss:book}, we have
$
\dot{H}^{s}\sts{\R^N}\hookrightarrow L^{ 2^{\ast}_{s} }\sts{\R^N}.
$ More preceisely,
\begin{align}\label{sblv:sharp}
\sts{\int_{\R^N}\abs{u\sts{x}}^{ 2^{\ast}_{s} }\dx }^{ \frac{ 2 }{ 2^{\ast}_{s} } }
\leq
\mathrm{S}\sts{N,s} \iint_{\R^N\times \R^N}\frac{ \abs{ u\sts{x}-u\sts{x} }^2 }{ \abs{x-y}^{N+2s} }\dx\dy,
\end{align}
where
\begin{equation}\label{cstt:sharp}
\mathrm{S}\sts{N,s}
=
2^{-2s}\pi^{-s}\frac{ \Gamma\sts{ \frac{ N-2s }{ 2 } } }{ \Gamma\sts{ \frac{ N+2s }{ 2 } } }\sts{ \frac{ \Gamma\sts{N} }{ \Gamma\sts{\frac{N}{2}} } }^{\frac{2s}{N}}.
\end{equation}
For any domain $\Omega \subset \R^N$ with smooth boundary,
let $\dot{H}^{s}\sts{\Omega}$ denote the Sobolev space
which is defined as the completion of $C_{c}^{\infty}\sts{\Omega}$ under the norm which is defined by \eqref{norm}. Since
$\partial\Omega$ is smooth, we have (see for instance \cite{BS2019ampa,grisvard2011,tartar2007})
\begin{equation*}
  \dot{H}^s\sts{\Omega}=\setcdt{ u\in \dot{H}^s\sts{\R^N} }{ u\sts{x}=0\text{~~for~~a.e.~~} x\in\R^N\setminus\Omega   }.
\end{equation*}

For any closed subgroup $G$ of $O\sts{N}$
and
any continuous group homomorphism $\sigma:G\mapsto\Z_2$,
we define the subspace of $ \dot{H}^s\sts{\Omega} $ which coincides with all
$\sigma$-equivariant functions under the group $G$ as follows,
\begin{equation*}
  \dot{H}^s\sts{\Omega}^{\sigma}_{G}
  =
  \setcdt{
    u\in \dot{H}^s\sts{\Omega}
  }{
    u\sts{g x}=\sigma\sts{g}u\sts{x},\text{~~for all~~} g\in G \text{~~and~~} x\in\Omega
  }.
\end{equation*}
In what follows, we will always assume that the group homomorphism $\sigma$ is surjective,
and the group $G$ satisfies
\begin{enumerate}[label=$\bm{(}$\textbf{G}\textbf{\arabic{*}}$\bm{)}$, ref=$\bm{(}$\textbf{G}\textbf{\arabic{*}}$\bm{)}$]
  \item\label{assum:G:orbit} For every $x\in\R^N$, either $\dim\sts{G\cdot x}>0$ or $G\cdot x={x}$.
  \item\label{assum:G:null} There exists at least one point $\xi\in\R^N$ such that $\sigma\sts{G_{\xi}}=\{1\}$.
\end{enumerate}


\begin{lemm}[{\cite[page 195]{BCM2005tmna}}]
	\label{lem:ifntdim}
	Let $G$ be  closed subgroup of the group $O\sts{N}$, $\Omega$ be a $G$-invariant domain in $\R^N$, and $\sigma:G\mapsto\Z_2$ be a continuous group homomorphism.
	If the group $G$ satisfies \ref{assum:G:null}, then
	the space $\dot{H}^s\sts{\Omega}^{\sigma}_{G}$ is infinite dimensional.
\end{lemm}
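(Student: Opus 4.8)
The plan is to show that $\dot H^s(\Omega)^\sigma_G$ already contains an infinite linearly independent family of test functions: it suffices to produce $u_1,u_2,\dots\in C_c^\infty(\Omega)$ that are linearly independent and each $\sigma$-equivariant under $G$, since $C_c^\infty(\Omega)$ embeds linearly into $\dot H^s(\Omega)$ (the $\dot H^s$-norm is nondegenerate on compactly supported functions). These functions will be obtained by equivariantly propagating radial bumps supported on a disk transverse to a single, carefully chosen $G$-orbit.

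\emph{Step 1: choosing the orbit.} First I would locate a point $\xi_0\in\Omega$ whose stabilizer is annihilated by $\sigma$, that is, $\sigma(G_{\xi_0})=\{1\}$. Assumption \ref{assum:G:null} yields such a point $\xi$ in $\R^N$, and it can be moved into $\Omega$ as follows. The group $G$, being closed in $O(N)$, is a compact Lie group; by the principal orbit type theorem for the action of $G$ on the connected manifold $\R^N$, the principal isotropy subgroup $H$ is, up to conjugacy, contained in every isotropy subgroup, in particular in $G_\xi$, and since $\Z_2$ is abelian the homomorphism $\sigma$ is invariant under conjugation in $G$, so $\sigma(H)=\sigma(gHg^{-1})\subseteq\sigma(G_\xi)=\{1\}$ for a suitable $g$. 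Hence every principal point $x$ satisfies $\sigma(G_x)=\sigma(H)=\{1\}$, and as the set of principal points is dense in $\R^N$ while $\Omega$ is open and nonempty, we may choose $\xi_0\in\Omega$ with $\sigma(G_{\xi_0})=\{1\}$. (When $\Omega=\R^N$, the case relevant to \Cref{thm:main}, one simply takes $\xi_0=\xi$.)

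\emph{Step 2: the equivariant extension.} Let $V\subset\R^N$ be the orthogonal complement of the tangent space to the compact orbit $G\cdot\xi_0\subseteq\Omega$; then $G_{\xi_0}$ acts orthogonally on $V$, and since $G\cdot\xi_0$ is contained in a sphere (or equals $\{0\}$) we have $\dim V\geq 1$. I would then apply the equivariant tubular neighborhood (slice) theorem: for a small enough $G_{\xi_0}$-invariant ball $B\subseteq V$ there is a $G$-equivariant diffeomorphism $\Phi\colon G\times_{G_{\xi_0}}B\to\mathcal U$ onto a $G$-invariant open set $\mathcal U$ with $\overline{\mathcal U}\subseteq\Omega$. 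Given any radial $f\in C_c^\infty(B)$ — radiality making $f$ automatically $G_{\xi_0}$-invariant — define $u_f(x)=\sigma(g)f(v)$ for $x=\Phi([g,v])\in\mathcal U$ and $u_f(x)=0$ otherwise. Then $u_f$ is well defined, because $k\in G_{\xi_0}$ has $\sigma(k)=1$ and $f(kv)=f(v)$, so $\sigma(g)f(v)$ depends only on the class $[g,v]$; it is smooth, because $\sigma$ is locally constant on $G$ and $\Phi$ is a diffeomorphism; it is compactly supported in $\mathcal U\subseteq\Omega$; and it is $\sigma$-equivariant, since $u_f(hx)=\sigma(hg)f(v)=\sigma(h)u_f(x)$ for $x\in\mathcal U$, both sides vanishing for $x\notin\mathcal U$. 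Thus $u_f\in C_c^\infty(\Omega)$ is $\sigma$-equivariant under $G$.

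\emph{Step 3: conclusion.} Because $\dim V\geq 1$, I can pick radial bumps $f_1,f_2,\dots\in C_c^\infty(B)\setminus\{0\}$ with pairwise disjoint supports; these are linearly independent, and the map $f\mapsto u_f$ is linear and injective (indeed $f(v)=u_f(\Phi([e,v]))$ recovers $f$, where $e\in G$ is the identity). Hence $\{u_{f_j}\}_{j\geq1}$ is an infinite linearly independent subset of $\dot H^s(\Omega)^\sigma_G$, so that space is infinite dimensional. The two steps demanding care are the passage in Step 1 from the abstract point of \ref{assum:G:null} to a point actually lying in $\Omega$ (handled by the principal orbit theorem) and the well-definedness and smoothness of $u_f$ where different translates $gB$ overlap (handled by $G_{\xi_0}\subseteq\ker\sigma$ together with the $G_{\xi_0}$-invariance of radial functions on $V$); the remaining verifications are routine.
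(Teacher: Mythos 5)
Your proof is correct, and it fills in a statement the paper leaves entirely to the reference \cite{BCM2005tmna}, so there is no in-text argument to compare against. The two nontrivial moves — using the principal orbit type theorem together with the abelianness of $\Z_2$ (so that $\sigma$ is conjugation-invariant) to push the point guaranteed by \ref{assum:G:null} into the open set $\Omega$, and the slice theorem to extend a $G_{\xi_0}$-invariant bump on the normal disk $B\subset V$ to a $\sigma$-equivariant element of $C_c^\infty(\Omega)$ — are exactly the right tools, and the well-definedness and injectivity checks (using $G_{\xi_0}\subseteq\ker\sigma$, radiality of $f$, and $f(v)=u_f(\Phi([e,v]))$) are sound. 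One could shorten Step~3 by observing that $f\mapsto u_f$ is an injective linear map from the infinite-dimensional space of radial $C_c^\infty(B)$ functions, making the disjoint-support choice unnecessary, and in the paper's actual application ($\Omega$ a ball centered at $0$) Step~1 simplifies to rescaling $\xi$ along its ray since orthogonal actions preserve norms; but as written your argument covers the general $G$-invariant domain claimed in the lemma.
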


\subsection{The $\sigma$-equivariant solutions of \eqref{nls} vanishing outside a $G$-invariant domain}
Let $G$ be  closed subgroup of the group $O\sts{N}$, $\Omega$ be a $G$-invariant domain in $\R^N$, and $\sigma:G\mapsto\Z_2$ be a continuous group homomorphism.
Now, we  consider
the fractional critical Schr\"odinger equation,
\begin{align}\label{dnls}
\begin{cases}
  \slaplace{u}-\abs{u}^{2^{\ast}_{s}-2}u=0, & \mbox{in } \R^N, \\
  u\in\dot{H}^s\sts{\Omega}^{\sigma}_{G}. & \
\end{cases}
\end{align}

By the critical point theory (for instance, see \cite{Chang2005, Struwe2008, Willem1997}),
the function $u$ satisfies \eqref{dnls} if and only if $u\in\dot{H}^s\sts{\Omega}^{\sigma}_{G}$ is a critical point of the Lagrange functional as follows:
\begin{equation}\label{func:e}
  \Ecal\sts{u\, ; \,\Omega }
  =
  \frac{1}{2}\iint_{\R^N\times \R^N}\frac{ \abs{ u\sts{x}-u\sts{y} }^2 }{ \abs{x-y}^{N+2s}}\dx\dy
  -
  \frac{1}{ 2^{\ast}_{s} }\int_{ \Omega }\abs{u\sts{x}}^{ 2^{\ast}_{s} }\dx.
\end{equation}
Moreover, if $u$ is a nontrivial solution to \eqref{dnls}, then $u$ also belongs to the \textit{Nehari manifold} $\Nscr\sts{\Omega}^{\sigma}_{G}$,
\begin{equation}
\label{nehari:omega}
  \Nscr\sts{\Omega}^{\sigma}_{G}
  =
  \setcdt{u
  }{
  u\in \dot{H}^s\sts{\Omega}^{\sigma}_{G}\setminus\ltl{0},  \Ncal\sts{u\, ; \,\Omega }=0
     },
\end{equation}
where the \textit{Nehari functional} $  \Ncal\sts{u\, ; \,\Omega }$ is defined by
\begin{equation}\label{func:n}
  \Ncal\sts{u\, ; \,\Omega }
  =
  \iint_{\R^N\times \R^N}\frac{ \mid u\sts{x}-u\sts{y}\mid^{2} }{ \abs{x-y}^{N+2s}}\dx\dy
  -
  \int_{\Omega}\abs{u\sts{x}}^{ 2^{\ast}_{s} }\dx.
\end{equation}
Now, we can reduce the variational problem for \eqref{dnls}  to seek the critical points of $  \Ecal\sts{\cdot\, ; \,\Omega } $
restricted to the subspace $\dot{H}^s\sts{\Omega}^{\sigma}_{G}$ by the following lemma.
\begin{lemm}
\label{lem:weaksln}
Under the assumptions of \Cref{lem:ifntdim},
if $u\in \dot{H}^s\sts{\Omega}^{\sigma}_{G}$ satisfies
\begin{equation*}
  \action{ \nabla\Ecal\sts{u\, ; \,\Omega } }{ \varphi }=0,
  \quad
  \text{~~for all~~} \varphi\in C_{c}^{\infty}\sts{\Omega}^{\sigma}_{G},
\end{equation*}
where
\begin{equation}
\label{eq:ccifnt}
  C_{c}^{\infty}\sts{\Omega}^{\sigma}_{G}
  =
  \setcdt{
    u\in C_{c}^{\infty}\sts{\Omega}
  }{
    u\sts{g x}=\sigma\sts{g}u\sts{x},\text{~~for all~~} g\in G \text{~~and~~} x\in\Omega
  },
\end{equation}
then
\begin{equation*}
  \action{ \nabla\Ecal\sts{u\, ; \,\Omega } }{ \tphi }=0,
  \quad
  \text{~~for all~~} \tphi\in \dot{H}^s\sts{\Omega}.
\end{equation*}
\end{lemm}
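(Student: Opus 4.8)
The plan is to combine a principle of symmetric criticality for the nonlocal functional $\Ecal(\cdot\,;\Omega)$ with a density argument. Since $G$ is a closed subgroup of $O(N)$ it is compact; let $\mu$ be its normalized Haar measure, let $G$ act on functions by $(g\cdot v)(x)=\sigma(g)\,v(g^{-1}x)$, and define the averaging operator
\[
  (Pv)(x)=\int_G \sigma(g)\,v(g^{-1}x)\,\d\mu(g),\qquad v\in\dot H^s(\R^N),
\]
the integral converging in $\dot H^s(\R^N)$ because $g\mapsto g\cdot v$ is continuous and $G$ is compact. First I would record the properties of $P$ that are used. Each $g\in G$ acts as a linear isometry of $\dot H^s(\R^N)$, since the norm in \eqref{norm} is invariant under orthogonal changes of variables and $\sigma(g)^2=1$; hence $\norm{Pv}\le\norm{v}$ by Minkowski's integral inequality. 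Left invariance of $\mu$ and the homomorphism property of $\sigma$ show that $Pv$ is $\sigma$-equivariant and that $Pv=v$ when $v$ is already $\sigma$-equivariant, so $P$ is a bounded projection of $\dot H^s(\R^N)$ onto its $\sigma$-equivariant subspace. Because $\Omega$ is $G$-invariant, $P$ maps $\dot H^s(\Omega)$ into $\dot H^s(\Omega)^\sigma_G$; and, differentiating under the Haar integral (legitimate since $\sigma$ is $\Z_2$-valued and continuous, hence locally constant, while $G$ is compact) together with the fact that $G\cdot K$ is a compact subset of $\Omega$ for every compact $K\subset\Omega$, $P$ maps $C_c^\infty(\Omega)$ into $C_c^\infty(\Omega)^\sigma_G$.

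Next I would upgrade the hypothesis from test functions in $C_c^\infty(\Omega)^\sigma_G$ to test functions in $\dot H^s(\Omega)^\sigma_G$. By the very definition of $\dot H^s(\Omega)$, the space $C_c^\infty(\Omega)$ is dense in it; applying the bounded operator $P$ and the previous step, $C_c^\infty(\Omega)^\sigma_G=P\bigl(C_c^\infty(\Omega)\bigr)$ is dense in $P\bigl(\dot H^s(\Omega)\bigr)=\dot H^s(\Omega)^\sigma_G$. Since $\varphi\mapsto\action{\nabla\Ecal(u\,;\Omega)}{\varphi}$ is a bounded linear functional on $\dot H^s(\R^N)$ --- its quadratic part estimated by the Cauchy--Schwarz inequality and its nonlinear part by H\"older's inequality together with the Sobolev embedding \eqref{sblv:sharp} --- the hypothesis $\action{\nabla\Ecal(u\,;\Omega)}{\varphi}=0$ on $C_c^\infty(\Omega)^\sigma_G$ extends by density to $\action{\nabla\Ecal(u\,;\Omega)}{v}=0$ for all $v\in\dot H^s(\Omega)^\sigma_G$.

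The last step is the symmetric-criticality identity. A direct change of variables gives $\Ecal(g\cdot v\,;\Omega)=\Ecal(v\,;\Omega)$ for every $g\in G$ and $v\in\dot H^s(\R^N)$: orthogonality of $g$ leaves the first term of \eqref{func:e} invariant, and $G$-invariance of $\Omega$ leaves the second term invariant. Fix $\tilde\varphi\in\dot H^s(\Omega)$; since $u$ is $\sigma$-equivariant we have $g\cdot u=u$ for all $g\in G$, hence $\Ecal\bigl(u+t\,(g\cdot\tilde\varphi)\,;\Omega\bigr)=\Ecal\bigl(g\cdot(u+t\tilde\varphi)\,;\Omega\bigr)=\Ecal\bigl(u+t\tilde\varphi\,;\Omega\bigr)$ for all $t\in\R$, and differentiating at $t=0$ gives $\action{\nabla\Ecal(u\,;\Omega)}{g\cdot\tilde\varphi}=\action{\nabla\Ecal(u\,;\Omega)}{\tilde\varphi}$. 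Integrating this over $g\in G$ against $\mu$ and interchanging the Haar integral with the bounded linear functional yields $\action{\nabla\Ecal(u\,;\Omega)}{\tilde\varphi}=\action{\nabla\Ecal(u\,;\Omega)}{P\tilde\varphi}$. As $P\tilde\varphi\in\dot H^s(\Omega)^\sigma_G$, the right-hand side vanishes by the second step, which is precisely the assertion of the lemma.

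I expect the main obstacle to be the regularity claim in the first step, that the $G$-average of a function in $C_c^\infty(\Omega)$ again lies in $C_c^\infty(\Omega)$: this is immediate for finite $G$, but for positive-dimensional $G$ one has to justify differentiation under the Haar integral uniformly in $g$ and verify that the support remains a compact subset of $\Omega$. The remaining points --- boundedness of $\nabla\Ecal(u\,;\Omega)$, the change of variables underlying the invariance of $\Ecal$, and the interchange of a Haar integral with a continuous linear functional --- are routine.
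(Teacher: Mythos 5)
Your proof follows the same underlying idea as the paper's: average the test function $\tilde\varphi$ over $G$ against $\sigma$ to produce $P\tilde\varphi$, and use the $\sigma$-equivariance of $u$ together with the $G$-invariance of $|x-y|$ and of $\Omega$ to identify $\action{\nabla\Ecal(u\,;\Omega)}{\tilde\varphi}$ with $\action{\nabla\Ecal(u\,;\Omega)}{P\tilde\varphi}$. The paper establishes this identity by a direct Fubini--plus--change-of-variables computation in the double integral, whereas you derive it more conceptually by differentiating the $G$-invariance $\Ecal(g\cdot v\,;\Omega)=\Ecal(v\,;\Omega)$ at $u$ in the direction $\tilde\varphi$; the two computations are equivalent. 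Your version does contain one genuine improvement over the paper's write-up: the paper takes $\tilde\varphi\in\dot H^s(\Omega)$, averages it, and then asserts $P\tilde\varphi\in C_c^\infty(\Omega)^\sigma_G$ in order to invoke the hypothesis directly, but this is false in general (averaging preserves $\dot H^s$, not smoothness or compact support). The step you insert --- using that $P$ is a bounded projection mapping $C_c^\infty(\Omega)$ into $C_c^\infty(\Omega)^\sigma_G$ and the density of $C_c^\infty(\Omega)$ in $\dot H^s(\Omega)$ to extend the vanishing hypothesis from $C_c^\infty(\Omega)^\sigma_G$ to all of $\dot H^s(\Omega)^\sigma_G$ by continuity of $\nabla\Ecal(u\,;\Omega)$ --- is exactly what is needed to make the argument rigorous, and it closes that gap.
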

\begin{proof}
Let $\widetilde{\varphi}\in \dot{H}^s\sts{\Omega}$. Define
\begin{equation*}
  \varphi(x)=\frac{1}{\mu(G)}\int_{G}\sigma(g)\widetilde{\varphi}(gx)\,d\mu,
\end{equation*}
where $\mu$ is the Haar measure on $G$. Then $\varphi \in C_{c}^{\infty}\sts{\Omega}^{\sigma}_{G}$, and note that $ \action{ \nabla\Ecal\sts{u\, ; \,\Omega } }{ \varphi }=0$, therefore, by Fubini's theorem and  a change of variable, we obtain
\begin{equation*}
\begin{aligned}
0=&\iint_{\R^N\times \R^N}\frac{ ( u\sts{x}-u\sts{y})(\varphi(x)-\varphi(y)) }{ \abs{x-y}^{N+2s}}\dx\dy
  -
\int_{\Omega}\abs{u\sts{x}}^{ 2^{\ast}_{s}-2 }u(x)\varphi(x)\dx\\
=&\frac{1}{\mu(G)}\iint_{\R^N\times \R^N}\int_{G}\frac{ \sigma(g)( u\sts{x}-u\sts{y})(\widetilde{\varphi}(gx)-\widetilde{\varphi}(gy)) }{ \abs{x-y}^{N+2s}}\d\mu\dx\dy\\
&\quad -\frac{1}{\mu(G)}\int_{\Omega}\int_{G}\abs{u\sts{x}}^{ 2^{\ast}_{s}-2 }u(x)\sigma(g)\widetilde{\varphi}(gx)\d\mu\dx\\
=&\frac{1}{\mu(G)}\iint_{\R^N\times \R^N}\int_{G}\frac{ ( u\sts{gx}-u\sts{gy})(\widetilde{\varphi}(gx)-\widetilde{\varphi}(gy)) }{ \abs{x-y}^{N+2s}}\d\mu\dx\dy\\
&\quad -\frac{1}{\mu(G)}\int_{\Omega}\int_{G}\abs{u\sts{gx}}^{ 2^{\ast}_{s}-2 }u(gx)\widetilde{\varphi}(gx)\d\mu\dx\\
=&\frac{1}{\mu(G)}\int_{G}\iint_{\R^N\times \R^N}\frac{ ( u\sts{gx}-u\sts{gy})(\widetilde{\varphi}(gx)-\widetilde{\varphi}(gy)) }{ \abs{x-y}^{N+2s}}\dx\dy\ \d\mu\\
&\quad-\frac{1}{\mu(G)}\int_{G}\int_{\Omega}\abs{u\sts{gx}}^{ 2^{\ast}_{s}-2 }u(gx)\widetilde{\varphi}(gx)\dx\d\mu\\
=&\frac{1}{\mu(G)}\int_{G}\d\mu\iint_{\R^N\times \R^N}\frac{ ( u\sts{x}-u\sts{y})(\widetilde{\varphi}(x)-\widetilde{\varphi}(y)) }{ \abs{x-y}^{N+2s}}\dx\dy\\
&\quad -\frac{1}{\mu(G)}\int_{G}\d\mu\int_{\Omega}\abs{u\sts{x}}^{ 2^{\ast}_{s}-2 }u(x)\widetilde{\varphi}(x)\dx\\
 =&\iint_{\R^N\times \R^N}\frac{ ( u\sts{x}-u\sts{y})(\widetilde{\varphi}(x)-\widetilde{\varphi}(y)) }{ \abs{x-y}^{N+2s}}\dx\dy
 -\int_{\Omega}\abs{u\sts{x}}^{ 2^{\ast}_{s}-2 }u(x)\widetilde{\varphi}(x)\dx\\
=&\action{ \nabla\Ecal\sts{u\, ; \,\Omega } }{ \widetilde{\varphi} }.
\end{aligned}
\end{equation*}
This completes the proof.
\end{proof}

\begin{lemm}
\label{lem:nehari}
Let the functionals $\Ecal\sts{\cdot\, ; \,\Omega }$, $\Ncal\sts{\cdot\, ; \,\Omega }$ be defined by \eqref{func:e}, \eqref{func:n} respectively, and ${\mathrm{S}\sts{N,s}}$
be given by \eqref{cstt:sharp}.
Under the assumptions of \Cref{lem:ifntdim},
the following statements hold:
\begin{enumerate}[label={(}\texttt{\alph{*}}{)}, ref={(}{\alph{*}}{)}]
\item\label{lem:nehari:bd} for any $u\in \dot{H}^s\sts{\Omega}^{\sigma}_{G}\setminus\ltl{0}$ satisfying
    $
      \norm{u}< {\mathrm{S}\sts{N,s}}^{ -\frac{N}{4s} }
    $
we have
    $\Ncal\sts{u\, ; \,\Omega }>0$;
\item\label{lem:pstv:e} for any $u\in \dot{H}^s\sts{\Omega}^{\sigma}_{G}\setminus\ltl{0}$ satisfying
    $ \norm{u}<\sts{ \frac{ 2^{\ast}_{s} }{ 2 }{\mathrm{S}\sts{N,s}}^{ -\frac{ 2^{\ast}_{s} }{ 2 } }  }^{ \frac{1}{ 2^{\ast}_{s}-2 } } $,
    we have
    $\Ecal\sts{u\, ; \,\Omega }>0$.
\item\label{lem:ngtv:nahari} for any $u\in \dot{H}^s\sts{\Omega}^{\sigma}_{G}\setminus\ltl{0}$ satisfying
    $\Ecal\sts{u\, ; \,\Omega }\leq 0$,
    we have
    $\Ncal\sts{u\, ; \,\Omega }< 0$.
\end{enumerate}

\end{lemm}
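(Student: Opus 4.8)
The plan is to obtain all three statements directly from the sharp Sobolev embedding \eqref{sblv:sharp}; there is no genuine difficulty beyond keeping track of exponents. First I would record the single estimate that does the work: since $\Omega\subseteq\R^N$ and the integrand $\abs{u\sts{x}}^{ 2^{\ast}_{s} }$ is nonnegative, for every $u\in \dot{H}^s\sts{\Omega}^{\sigma}_{G}$ one has
\begin{equation*}
  \int_{\Omega}\abs{u\sts{x}}^{ 2^{\ast}_{s} }\dx
  \leq \int_{\R^N}\abs{u\sts{x}}^{ 2^{\ast}_{s} }\dx
  \leq {\mathrm{S}\sts{N,s}}^{ 2^{\ast}_{s}/2 }\,\norm{u}^{ 2^{\ast}_{s} },
\end{equation*}
the last step being \eqref{sblv:sharp} raised to the power $2^{\ast}_{s}/2$ together with the identity \eqref{norm}. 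I would also observe that $u\neq0$ forces $\norm{u}>0$: if $\norm{u}=0$, the displayed chain gives $\int_{\R^N}\abs{u}^{ 2^{\ast}_{s} }\dx=0$, hence $u=0$ a.e., a contradiction; in particular one may divide freely by $\norm{u}^2$ below.

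For \ref{lem:nehari:bd}, substituting the estimate into \eqref{func:n} yields
\begin{equation*}
  \Ncal\sts{u\, ; \,\Omega }
  \geq \norm{u}^2\Bigl( 1-{\mathrm{S}\sts{N,s}}^{ 2^{\ast}_{s}/2 }\,\norm{u}^{ 2^{\ast}_{s}-2 } \Bigr),
\end{equation*}
which is strictly positive as soon as $\norm{u}^{ 2^{\ast}_{s}-2 }<{\mathrm{S}\sts{N,s}}^{ -2^{\ast}_{s}/2 }$. The only point to check is the exponent arithmetic: with $2^{\ast}_{s}=\frac{2N}{N-2s}$ one has $\frac{2^{\ast}_{s}}{2}=\frac{N}{N-2s}$ and $2^{\ast}_{s}-2=\frac{4s}{N-2s}$, so $\frac{2^{\ast}_{s}/2}{2^{\ast}_{s}-2}=\frac{N}{4s}$ and the condition reads exactly $\norm{u}<{\mathrm{S}\sts{N,s}}^{ -N/4s }$. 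Part \ref{lem:pstv:e} is identical in spirit; from \eqref{func:e} and the same estimate,
\begin{equation*}
  \Ecal\sts{u\, ; \,\Omega }
  \geq \norm{u}^2\Bigl( \tfrac12-\tfrac{1}{ 2^{\ast}_{s} }{\mathrm{S}\sts{N,s}}^{ 2^{\ast}_{s}/2 }\,\norm{u}^{ 2^{\ast}_{s}-2 } \Bigr) > 0
\end{equation*}
as soon as $\norm{u}^{ 2^{\ast}_{s}-2 }<\frac{ 2^{\ast}_{s} }{2}{\mathrm{S}\sts{N,s}}^{ -2^{\ast}_{s}/2 }$, i.e.\ $\norm{u}<\bigl( \frac{ 2^{\ast}_{s} }{2}{\mathrm{S}\sts{N,s}}^{ -2^{\ast}_{s}/2 } \bigr)^{ 1/(2^{\ast}_{s}-2) }$, which is the stated threshold.

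For \ref{lem:ngtv:nahari} I would argue in the opposite direction, without appealing to the Sobolev inequality at all. The hypothesis $\Ecal\sts{u\, ; \,\Omega }\leq 0$ reads $\tfrac12\norm{u}^2\leq \tfrac{1}{ 2^{\ast}_{s} }\int_{\Omega}\abs{u}^{ 2^{\ast}_{s} }\dx$, that is $\int_{\Omega}\abs{u}^{ 2^{\ast}_{s} }\dx\geq \tfrac{ 2^{\ast}_{s} }{2}\norm{u}^2$; feeding this into \eqref{func:n} gives
\begin{equation*}
  \Ncal\sts{u\, ; \,\Omega }
  = \norm{u}^2-\int_{\Omega}\abs{u}^{ 2^{\ast}_{s} }\dx
  \leq \Bigl( 1-\tfrac{ 2^{\ast}_{s} }{2} \Bigr)\norm{u}^2 .
\end{equation*}
Since $0<s<1$ forces $2^{\ast}_{s}=\frac{2N}{N-2s}>2$, the factor $1-\frac{ 2^{\ast}_{s} }{2}$ is strictly negative, and together with $\norm{u}^2>0$ this gives $\Ncal\sts{u\, ; \,\Omega }<0$, as claimed. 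Of the three parts, the step requiring the most care is the exponent bookkeeping in \ref{lem:nehari:bd} --- identifying $\tfrac{2^{\ast}_{s}/2}{2^{\ast}_{s}-2}$ with $\tfrac{N}{4s}$ --- but no real obstacle arises: the lemma is a direct, quantitative reading of the sharp embedding \eqref{sblv:sharp}.
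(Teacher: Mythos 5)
Your proof is correct and follows essentially the same route as the paper: apply the sharp Sobolev embedding \eqref{sblv:sharp} raised to the power $2^{\ast}_{s}/2$ to control $\int_{\Omega}\abs{u}^{2^{\ast}_{s}}\dx$ for parts \ref{lem:nehari:bd} and \ref{lem:pstv:e}, and use the inequality $\int_{\Omega}\abs{u}^{2^{\ast}_{s}}\dx\geq\frac{2^{\ast}_{s}}{2}\norm{u}^2$ implied by $\Ecal\leq 0$ for part \ref{lem:ngtv:nahari}. The only difference is that you spell out the exponent arithmetic $\tfrac{2^{\ast}_{s}/2}{2^{\ast}_{s}-2}=\tfrac{N}{4s}$ and the observation that $u\neq 0$ implies $\norm{u}>0$, both of which the paper leaves implicit.
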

\begin{proof}\ref{lem:nehari:bd}.
By \eqref{sblv:sharp}, we have
  \begin{align}\label{eq:002}
    \int_{\Omega}\abs{u\sts{x}}^{ 2^{\ast}_{s} }\dx
    \leq
     {\mathrm{S}\sts{N,s}}^{\frac{ 2^{\ast}_{s} }{ 2 }}\norm{u}^{ 2^{\ast}_{s} }
    <
    \norm{u}^{ 2 }.
  \end{align}
  Inserting \eqref{eq:002} into \eqref{func:n}, we obtain that
  \begin{align*}
    \Ncal\sts{u\, ; \,\Omega }>0.
  \end{align*}

\ref{lem:pstv:e}.
    By \eqref{sblv:sharp}, we get
\begin{align*}
  \notag
    \frac{ 1 }{ 2^{\ast}_{s} }\int_{\Omega}\abs{u\sts{x}}^{ 2^{\ast}_{s} }\dx
    \leq
    \frac{ 1 }{ 2^{\ast}_{s} }
    {\mathrm{S}\sts{N,s}}^{\frac{ 2^{\ast}_{s} }{ 2 }}\norm{u}^{ 2^{\ast}_{s} }
    <
    \frac{ 1 }{ 2 }\norm{u}^{ 2 },
  \end{align*}
  which implies that $\Ecal\sts{u\, ; \,\Omega }>0$.

\ref{lem:ngtv:nahari}.
Since for any $u\in \dot{H}^s\sts{\Omega}^{\sigma}_{G}\setminus\ltl{0}$ with
$
  \Ecal\sts{u\, ; \,\Omega }\leq 0
$,
we have
$
  \frac{2^{\ast}_{s}}{2}\norm{u}^2 \leq \int_{\Omega}\abs{u\sts{x}}^{ 2^{\ast}_{s} }\dx.
$
Therefore, we obtain
\begin{align*}
  \Ncal\sts{u\, ; \,\Omega }
  \leq
  &
  \sts{ 1- \frac{2^{\ast}_{s}}{2} }\norm{u}^2 < 0.
\end{align*}
This ends the proof of \Cref{lem:nehari}.
\end{proof}
As a corollary of \Cref{lem:nehari} that under the assumptions of \Cref{lem:ifntdim}, for any $u\in \dot{H}^s\sts{\Omega}^{\sigma}_{G}\setminus\ltl{0}$ satisfying
$\Ncal\sts{u\, ; \,\Omega }=0$, we have
$
  \Ecal\sts{u\, ; \,\Omega }>0,
$
which enables us to minimize the functional $ \Ecal\sts{\cdot\, ; \,\Omega } $ constrained on the Nehari manifold $ \Nscr\sts{\Omega }^{\sigma} _{G}$. More precisely, let us define
\begin{equation}
\label{m:nehari}
  \mathrm{m}\sts{\Omega}^{\sigma}_{ \Nscr,G } = \inf\setcdt{ \Ecal\sts{u\, ; \,\Omega } }{ u\in \Nscr\sts{\Omega }^{\sigma}_{G} },
\end{equation}
then $\mathrm{m}\sts{\Omega}^{\sigma}_{ \Nscr,G }\geq 0$.
In fact, the Nehari manifold $\Nscr\sts{\Omega }^{\sigma}_{G}$ is a natural constraint for the minimizer to the functional  $\Ecal\sts{\cdot\, ; \,\Omega }$. More precisely, we have :
\begin{lemm}\label{lem:natcstr}
Let $\mathrm{m}\sts{\Omega}^{\sigma}_{ \Nscr,G }$ be defined by \eqref{m:nehari}. Under the assumptions of \Cref{lem:ifntdim},
if $\varphi\in \Nscr\sts{\Omega }^{\sigma}_{G}$ satisfies
$
    \Ecal\sts{\varphi\, ; \,\Omega } = \mathrm{m}\sts{\Omega}^{\sigma}_{ \Nscr,G },
$
then
$$\action{ \grad{\Ecal}\sts{\varphi\, ; \,\Omega } }{ u }=0, \text{~~for~~all~~} u\in \dot{H}^s\sts{\Omega}^{\sigma}_{G}.$$
\end{lemm}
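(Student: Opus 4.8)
The plan is to show that the Nehari manifold $\Nscr(\Omega)^{\sigma}_{G}$ is a natural constraint, i.e.\ that a minimizer $\varphi$ of $\Ecal(\cdot\,;\,\Omega)$ on it is automatically a free critical point in $\dot H^s(\Omega)^{\sigma}_{G}$. The standard Lagrange-multiplier argument gives, at the constrained minimum, a scalar $\lambda$ with
\begin{equation*}
  \grad\Ecal(\varphi\,;\,\Omega) = \lambda\,\grad\Ncal(\varphi\,;\,\Omega)
  \quad\text{in }\bigl(\dot H^s(\Omega)^{\sigma}_{G}\bigr)'.
\end{equation*}
First I would justify this: $\Ncal(\cdot\,;\,\Omega)$ is $C^1$ on $\dot H^s(\Omega)^{\sigma}_{G}$, and on $\Nscr(\Omega)^{\sigma}_{G}$ its derivative does not vanish, so $\Nscr(\Omega)^{\sigma}_{G}$ is a genuine $C^1$ submanifold of codimension one and the multiplier rule applies. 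Concretely, pairing $\grad\Ncal(\varphi\,;\,\Omega)$ with $\varphi$ itself and using $\Ncal(\varphi\,;\,\Omega)=0$ one computes
\begin{equation*}
  \action{\grad\Ncal(\varphi\,;\,\Omega)}{\varphi}
  = 2\norm{\varphi}^2 - 2^{\ast}_{s}\int_{\Omega}\abs{\varphi(x)}^{2^{\ast}_{s}}\dx
  = (2-2^{\ast}_{s})\norm{\varphi}^2 < 0,
\end{equation*}
since $2^{\ast}_{s}>2$ and $\varphi\neq 0$; in particular $\grad\Ncal(\varphi\,;\,\Omega)\neq 0$, confirming the manifold structure and that the multiplier exists.

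Next I would pin down $\lambda$. Pairing the Lagrange identity with $\varphi$ and using $\Ncal(\varphi\,;\,\Omega)=0$ gives
\begin{equation*}
  0 = \action{\grad\Ecal(\varphi\,;\,\Omega)}{\varphi}
    = \lambda\,\action{\grad\Ncal(\varphi\,;\,\Omega)}{\varphi}
    = \lambda\,(2-2^{\ast}_{s})\norm{\varphi}^2,
\end{equation*}
where the first equality holds because $\action{\grad\Ecal(\varphi\,;\,\Omega)}{\varphi} = \norm{\varphi}^2 - \int_{\Omega}\abs{\varphi}^{2^{\ast}_{s}}\dx = \Ncal(\varphi\,;\,\Omega) = 0$. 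Since $(2-2^{\ast}_{s})\norm{\varphi}^2\neq 0$, we conclude $\lambda=0$, hence $\grad\Ecal(\varphi\,;\,\Omega)=0$ as an element of $\bigl(\dot H^s(\Omega)^{\sigma}_{G}\bigr)'$, i.e.\ $\action{\grad\Ecal(\varphi\,;\,\Omega)}{u}=0$ for all $u\in\dot H^s(\Omega)^{\sigma}_{G}$, which is the claim. (Note that one does \emph{not} need the minimality of $\varphi$ beyond the existence of the Lagrange multiplier; the vanishing of $\lambda$ is purely algebraic and would hold at any constrained critical point.)

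The only genuinely delicate point is the differentiability and the codimension-one manifold claim for $\Nscr(\Omega)^{\sigma}_{G}$: one must check that $u\mapsto\int_{\Omega}\abs{u}^{2^{\ast}_{s}}\dx$ is $C^1$ on $\dot H^s(\Omega)^{\sigma}_{G}$, which follows from the Sobolev embedding $\dot H^s(\R^N)\hookrightarrow L^{2^{\ast}_{s}}(\R^N)$ in \eqref{sblv:sharp} together with the continuity of the Nemytskii map $u\mapsto\abs{u}^{2^{\ast}_{s}-2}u$ from $L^{2^{\ast}_{s}}$ to $L^{(2^{\ast}_{s})'}$; the constrained critical point theory then applies on the Hilbert manifold $\dot H^s(\Omega)^{\sigma}_{G}$ (which is infinite dimensional by \Cref{lem:ifntdim}, so nontrivial). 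Everything else is the short algebraic computation above, and the sign $2^{\ast}_{s}-2>0$ is what makes it work.
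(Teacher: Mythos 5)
Your proof is correct and follows the same argument as the paper: apply the Lagrange multiplier rule for the constrained minimizer on $\Nscr\sts{\Omega}^{\sigma}_{G}$, pair the identity $\grad\Ecal(\varphi;\Omega)=\lambda\grad\Ncal(\varphi;\Omega)$ with $\varphi$, and use $\Ncal(\varphi;\Omega)=0$ together with $\varphi\neq 0$ to force $\lambda=0$. The only difference is that you supply the (correct) justification that $\grad\Ncal(\varphi;\Omega)\neq 0$, so the multiplier rule is actually applicable, a detail the paper tacitly assumes.
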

\begin{proof}
  Since $\varphi\in \Nscr\sts{\Omega }^{\sigma}_{G}$ is a minimizer of the functional $\Ecal\sts{u\, ; \,\Omega }$ subject to $ \Ncal\sts{\varphi\, ; \,\Omega }=0 $,
there exists a Lagrange multiplier $\lambda\in\R$ such that
\begin{equation}\label{eq:005}
  \action{ \grad{\Ecal}\sts{\varphi\, ; \,\Omega } }{ u }=\lambda \action{ \grad{\Ncal}\sts{\varphi\, ; \,\Omega } }{ u },
  \quad
  \text{ for all } u\in \dot{H}^s\sts{\Omega}^{\sigma}_{G}.
\end{equation}
By choosing $u=\varphi$ in \eqref{eq:005}, and using the fact that
$$  \action{ \grad{\Ecal}\sts{\varphi\, ; \,\Omega } }{ \varphi } = {\Ncal}\sts{\varphi\, ; \,\Omega } ,$$
we have
\begin{align}\label{eq:006}
\notag
  0 & = \action{ \grad{\Ecal}\sts{\varphi\, ; \,\Omega } }{ \varphi }
  \\
\notag
  & = \lambda \action{ \grad{\Ncal}\sts{\varphi\, ; \,\Omega } }{ \varphi }
  \\
  & = \lambda\sts{ 2\norm{ \varphi }^2 - 2^{\ast}_{s} \int_{\Omega}\abs{\varphi\sts{x}}^{ 2^{\ast}_{s} }\dx  },
\end{align}
which, together with ${\Ncal}\sts{\varphi\, ; \,\Omega }=0$ and $\varphi\neq 0$, implies that
$
  \lambda = 0.
$
This completes the proof.
\end{proof}

The following lemma shows that  $\mathrm{m}\sts{\Omega}^{\sigma}_{ \Nscr,G } $  defined by \eqref{m:nehari}
coincides with the critical value which is characterized via the well-known mountain pass theorem.
\begin{lemm}
\label{lem:hhr2mp}
Suppose that the assumptions of \Cref{lem:ifntdim} hold, and let
\begin{equation}
\label{eq:016}
  \mathrm{m}\sts{\Omega}_{ \mathrm{MP},G }^{\sigma}=\inf_{\gamma\in \Theta}\max_{t\in\left[0,1\right] }\Ecal\sts{\gamma\sts{t}\, ; \,\Omega }.
\end{equation}
where
\begin{equation*}
\Theta =\setcdt{ \gamma\in \Ccal\sts{ \left[0,1\right]\, ,\, \dot{H}^s\sts{\Omega}^{\sigma}_{ G } } }{ \gamma\sts{0}=0,\, \gamma\sts{1}\neq 0,\,  \Ecal\sts{\gamma\sts{1}\, ; \,\Omega }\leq 0 },
\end{equation*}
then we have
\begin{equation*}
  \mathrm{m}\sts{\Omega}_{ \mathrm{MP},G }^{\sigma}=  \mathrm{m}\sts{\Omega}^{\sigma}_{ \Nscr,G }.
\end{equation*}
\end{lemm}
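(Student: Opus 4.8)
The plan is to prove the equality $\mathrm{m}\sts{\Omega}_{\mathrm{MP},G}^{\sigma} = \mathrm{m}\sts{\Omega}^{\sigma}_{\Nscr,G}$ by establishing the two inequalities separately, exploiting the homogeneity structure of $\Ecal$ and $\Ncal$ under the scaling $u \mapsto tu$. The key elementary observation is that for fixed $u \in \dot H^s(\Omega)^\sigma_G \setminus \{0\}$, the function $t \mapsto \Ecal(tu;\Omega) = \frac{t^2}{2}\norm{u}^2 - \frac{t^{2^\ast_s}}{2^\ast_s}\int_\Omega |u|^{2^\ast_s}\dx$ has exactly one positive critical point $t_u = \bigl(\norm{u}^2 / \int_\Omega |u|^{2^\ast_s}\dx\bigr)^{1/(2^\ast_s - 2)}$, at which $\Ecal$ attains its maximum over the ray $\{tu : t \geq 0\}$, this maximum value equals $\bigl(\tfrac12 - \tfrac{1}{2^\ast_s}\bigr)\norm{t_u u}^2$, and moreover $t_u u \in \Nscr(\Omega)^\sigma_G$ since $\Ncal(tu;\Omega) = t^2\norm{u}^2 - t^{2^\ast_s}\int_\Omega|u|^{2^\ast_s}\dx$ vanishes precisely at $t = t_u$ among positive $t$.

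For the inequality $\mathrm{m}\sts{\Omega}_{\mathrm{MP},G}^{\sigma} \leq \mathrm{m}\sts{\Omega}^{\sigma}_{\Nscr,G}$, I would take an arbitrary $v \in \Nscr(\Omega)^\sigma_G$ and construct a path $\gamma \in \Theta$ through $v$ of the form $\gamma(t) = (tT)v$ for a suitably large $T$; I must check that $\gamma(0)=0$, $\gamma(1) = Tv \neq 0$, that $\Ecal(Tv;\Omega) \leq 0$ (which holds for $T$ large since $2^\ast_s > 2$), that $\gamma$ is continuous into $\dot H^s(\Omega)^\sigma_G$, and crucially that $\max_{t\in[0,1]}\Ecal(\gamma(t);\Omega) = \Ecal(v;\Omega)$. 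This last point follows from the single-peak structure above: the maximum along the ray is attained exactly at the Nehari point $v = 1\cdot v$, so enlarging the endpoint to $Tv$ does not increase the max. Taking the infimum over $v \in \Nscr(\Omega)^\sigma_G$ gives the bound. Note this uses \Cref{lem:nehari}, which guarantees $\Ecal(v;\Omega) > 0$ for $v$ on the Nehari manifold, so the path genuinely climbs.

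For the reverse inequality $\mathrm{m}\sts{\Omega}_{\mathrm{MP},G}^{\sigma} \geq \mathrm{m}\sts{\Omega}^{\sigma}_{\Nscr,G}$, I would show that every path $\gamma \in \Theta$ must cross the Nehari manifold. Given $\gamma \in \Theta$, consider the continuous function $t \mapsto \Ncal(\gamma(t);\Omega)$. At $t$ near $0$, $\gamma(t)$ is small in norm, so by \Cref{lem:nehari}\ref{lem:nehari:bd} we have $\Ncal(\gamma(t);\Omega) > 0$ once $\norm{\gamma(t)} < \mathrm{S}(N,s)^{-N/4s}$, which holds for $t$ small by continuity and $\gamma(0)=0$; at $t=1$, since $\Ecal(\gamma(1);\Omega) \leq 0$ and $\gamma(1)\neq 0$, \Cref{lem:nehari}\ref{lem:ngtv:nahari} gives $\Ncal(\gamma(1);\Omega) < 0$. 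By the intermediate value theorem there is $t_0 \in (0,1)$ with $\Ncal(\gamma(t_0);\Omega) = 0$ and $\gamma(t_0) \neq 0$ (it cannot be zero since $\Ncal$ would then be zero only trivially — more carefully, one picks the first such crossing where the function was positive just before, so $\gamma(t_0)\neq 0$), hence $\gamma(t_0) \in \Nscr(\Omega)^\sigma_G$ and $\max_{t\in[0,1]}\Ecal(\gamma(t);\Omega) \geq \Ecal(\gamma(t_0);\Omega) \geq \mathrm{m}\sts{\Omega}^{\sigma}_{\Nscr,G}$. Taking the infimum over $\gamma \in \Theta$ finishes the proof.

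The main obstacle I anticipate is not any single estimate but the care needed in the intermediate-value step: one must ensure the crossing point $t_0$ yields a nonzero function (so it actually lies on the Nehari manifold rather than being the trivial zero of $\Ncal$), which requires choosing $t_0$ as, say, the supremum of times $t$ for which $\Ncal(\gamma(\tau);\Omega) > 0$ for all $\tau \in (0,t)$ together with the lower bound $\Ecal \geq (\tfrac12 - \tfrac1{2^\ast_s})\norm{\cdot}^2$ and the norm threshold from \Cref{lem:nehari}\ref{lem:nehari:bd} to keep the function bounded away from $0$ on that initial stretch; and in the construction of the competitor path one must verify continuity of $t \mapsto (tT)v$ in the $\dot H^s$ norm, which is immediate, and that the endpoint condition $\Ecal(Tv;\Omega)\leq 0$ is compatible with $v$ being on the Nehari manifold. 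Everything else is direct bookkeeping with the two displayed one-variable functions.
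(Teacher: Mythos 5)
Your proposal is correct and follows essentially the same route as the paper's own proof: for one inequality you take a Nehari-manifold element $v$ and run the straight-line path $t\mapsto tTv$ (the paper makes the specific choice $T=(2^\ast_s/2)^{1/(2^\ast_s-2)}$, which achieves $\Ecal=0$ exactly at the endpoint), observing that since $\Ncal(v;\Omega)=0$ the maximum of $\Ecal$ along the ray is attained at $v$ itself; for the other you use \Cref{lem:nehari}\ref{lem:nehari:bd} near $t=0$ and \Cref{lem:nehari}\ref{lem:ngtv:nahari} at $t=1$ together with the intermediate value theorem to force every admissible path to cross the Nehari manifold. Your extra care about the crossing point being nonzero is a legitimate concern that the paper glosses over, and the fix you sketch (take the last time $\Ncal\circ\gamma\geq 0$ and use the norm threshold from \Cref{lem:nehari}\ref{lem:nehari:bd} to keep $\gamma(t_0)$ bounded away from $0$) is the right way to close it, but this is a refinement of the same argument rather than a different one.
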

\begin{proof}
Firstly, we show that
\begin{equation*}
  \mathrm{m}\sts{\Omega}^{\sigma}_{ \Nscr,G } \geq \mathrm{m}\sts{\Omega}_{ \mathrm{MP},G }^{\sigma}.
\end{equation*}
For each $u\in \Nscr\sts{\Omega}^{\sigma}_{G}$, we define
$
  \gamma_{\Nscr,u}\sts{t}\doteq\sts{ \frac{ 2^{\ast}_{s} }{ 2 } }^{\frac{ 1 }{ 2^{\ast}_{s}-2 } }t\cdot u,
$
and
$
  \Theta_{\Nscr}
  \doteq
  \setcdt{ \gamma_{\Nscr,u} }{ u\in {\Nscr\sts{\Omega}^{\sigma}_{G}} }.
$
Obviously, we have
\begin{align*}
  \gamma_{\Nscr,u}\in  \Ccal\sts{ \left[0,1\right]\, ,\, \dot{H}^s\sts{\Omega}^{\sigma}_{G} }
  \text{~~with~~}
  \gamma_{\Nscr,u}\sts{0}=0,
  \text{~~and~~}
  \gamma_{\Nscr,u}\sts{1}\neq 0.
\end{align*}
On the one hand, a direct computation shows that
\begin{align*}
  \Ecal\sts{ \gamma_{\Nscr,u}\sts{1} \, ; \,\Omega }
  = &
  \sts{ \frac{ {2^{\ast}_{s}}^{ { 2 } } }{ 2^{ { 2^{\ast}_{s} } } } }^{ \frac{ 1 }{ 2^{\ast}_{s}-2 } }\Ncal\sts{u}=0,
\end{align*}
which implies that,
$
  \Theta_{\Nscr} \subseteq \Theta.
$
On the other hand, for all $t\in\left[0,1\right]$, we have
\begin{align*}
  \Ecal\sts{ \gamma_{\Nscr,u}\sts{t} \, ; \,\Omega }
  =&
  \sts{ \frac{ {2^{\ast}_{s}}^{ { 2 } } }{ 2^{ { 2^{\ast}_{s} } } } }^{ \frac{ 1 }{ 2^{\ast}_{s}-2 } }
  \norm{  u }^2
  \sts{
    t^2
      -
    t^{ 2^{\ast}_{s} }
  }.
\end{align*}
By the elementary fact that
\begin{equation*}
  t^2
      -
    t^{ 2^{\ast}_{s} }
  \leq
    \sts{ { \sts{\frac{2}{2^{\ast}_{s}} }^{ \frac{1}{ 2^{\ast}_{s}-2 } } } }^2
      -
    \sts{ { \sts{\frac{2}{2^{\ast}_{s}} }^{ \frac{1}{ 2^{\ast}_{s}-2 } } } }^{ 2^{\ast}_{s} },
\end{equation*}
we have
\begin{align*}
  \Ecal\sts{ \gamma_{\Nscr,u}\sts{t} \, ; \,\Omega }
  \leq
  &
  \Ecal\sts{ \gamma_{\Nscr,u}\sts{ \sts{\frac{2}{2^{\ast}_{s}} }^{ \frac{1}{ 2^{\ast}_{s}-2 } } } \, ; \,\Omega }
  \\
  =
  &
  \Ecal\sts{ u \, ; \,\Omega }.
\end{align*}
Hence,
\begin{equation}
\begin{aligned}[b]
\label{eq:003}
  \mathrm{m}\sts{\Omega}^{\sigma}_{ \Nscr,G }
  =
  &
  \inf\setcdt{ \Ecal\sts{u\, ; \,\Omega } }{ u\in \Nscr\sts{\Omega }^{\sigma}_{G} }
  \\
  =
  &
  \inf_{ \gamma_{\Nscr,u}\in \Theta_{\Nscr} }\max_{ t\in\left[0,1\right] }
  \Ecal\sts{ \gamma_{\Nscr,u}\sts{t} \, ; \,\Omega }
  \\
  \geq
  &
  \inf_{ \gamma\in \Theta }\max_{ t\in\left[0,1\right] }
  \Ecal\sts{ \gamma\sts{t} \, ; \,\Omega }
  \\
  =
  &
  \mathrm{m}\sts{\Omega}_{ \mathrm{MP},G }^{\sigma}.
\end{aligned}
\end{equation}

Next, we show that
\begin{equation*}
  \mathrm{m}\sts{\Omega}_{ \mathrm{MP},G }^{\sigma}\geq \mathrm{m}\sts{\Omega}^{\sigma}_{ \Nscr,G }.
\end{equation*}
Indeed, for each $ \gamma\in \Theta$, one has $\gamma\sts{0}=0$, by \ref{lem:nehari:bd} of \Cref{lem:nehari}, there exists $s_{u}\in\sts{0,1}$
such that for all $t\in \sts{ 0, s_{u} }$,
\begin{equation}
\label{eq:007}
  \Ncal\sts{ \gamma\sts{t} \, ; \,\Omega }>0.
\end{equation}
However $\Ecal\sts{ \gamma\sts{1} \, ; \,\Omega }\leq 0$, by \ref{lem:ngtv:nahari} of \Cref{lem:nehari}, we have
\begin{equation}\label{eq:008}
  \Ncal\sts{ \gamma\sts{1} \, ; \,\Omega }<0.
\end{equation}
By  \eqref{eq:007} and \eqref{eq:008}, there exists $t_{u,\max}\in \sts{s_{u},1  }\subset\left[0,1\right]$ such that
$
  \Ncal\sts{ \gamma\sts{ t_{u,\max} } \, ; \,\Omega }=0,
$
which means that $\gamma\sts{ t_{u,\max} } \in \Nscr\sts{\Omega }^{\sigma}_{G}$.  Therefore,
\begin{equation}
\begin{aligned}[b]
\label{eq:004}
  \mathrm{m}\sts{\Omega}_{ \mathrm{MP},G }^{\sigma}
  =
  &
  \inf_{ \gamma\in \Theta }\max_{ t\in\left[0,1\right] }
  \Ecal\sts{ \gamma\sts{t} \, ; \,\Omega }
  \\
  \geq
  &
  \inf_{ \gamma\in \Theta }
  \Ecal\sts{ \gamma\sts{ t_{u,\max} } \, ; \,\Omega }
  \\
  \geq
  &
  \inf\setcdt{ \Ecal\sts{u\, ; \,\Omega } }{ u\in \Nscr\sts{\Omega }^{\sigma}_{G} }
  \\
  =
  &
  \mathrm{m}\sts{\Omega}^{\sigma}_{ \Nscr,G }.
\end{aligned}
\end{equation}
Combining \eqref{eq:003} with \eqref{eq:004}, we hence complete the proof.
\end{proof}
In view of the above result, we will denote for short that
\begin{equation}
\label{eq:015}
  \mathrm{m}\sts{\Omega}_{ G }^{\sigma} :=  \mathrm{m}\sts{\Omega}_{ \mathrm{MP},G }^{\sigma} = \mathrm{m}\sts{\Omega}^{\sigma}_{ \Nscr,G }.
\end{equation}

\begin{lemm}
\label{lem:hari}
If $\Omega$ is a $G$-invariant domain in $\mathbb{R^{N}}$ and $\Omega^{G}\neq \emptyset$, then
\begin{equation*}
  \mathrm{m}\sts{\Omega}_{ G }^{\sigma} = \mathrm{m}\sts{\R^N}_{ G }^{\sigma}.
\end{equation*}
\end{lemm}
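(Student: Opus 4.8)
The plan is to establish the two inequalities $\mathrm{m}\sts{\Omega}_{G}^{\sigma}\ge\mathrm{m}\sts{\R^{N}}_{G}^{\sigma}$ and $\mathrm{m}\sts{\Omega}_{G}^{\sigma}\le\mathrm{m}\sts{\R^{N}}_{G}^{\sigma}$ separately. The first is elementary: by the characterization of $\dot{H}^{s}\sts{\Omega}$ recalled in \Cref{sec:preliminaries}, every $u\in\dot{H}^{s}\sts{\Omega}^{\sigma}_{G}$ vanishes a.e. outside $\Omega$, so $\int_{\Omega}\abs{u\sts{x}}^{2^{\ast}_{s}}\dx=\int_{\R^{N}}\abs{u\sts{x}}^{2^{\ast}_{s}}\dx$ and hence $\Ncal\sts{u\, ; \,\Omega}=\Ncal\sts{u\, ; \,\R^{N}}$ and $\Ecal\sts{u\, ; \,\Omega}=\Ecal\sts{u\, ; \,\R^{N}}$. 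Consequently $\Nscr\sts{\Omega}^{\sigma}_{G}\subseteq\Nscr\sts{\R^{N}}^{\sigma}_{G}$, and taking the infimum of the same functional $\Ecal\sts{\cdot\, ; \,\R^{N}}$ over a smaller set can only increase it; together with \eqref{eq:015} this yields $\mathrm{m}\sts{\Omega}_{G}^{\sigma}\ge\mathrm{m}\sts{\R^{N}}_{G}^{\sigma}$.

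For the reverse inequality I would run a concentration (critical rescaling) argument anchored at a fixed point of $G$. Fix $\xi_{0}\in\Omega^{G}$; since $\Omega$ is open there is $r>0$ with $B\sts{\xi_{0},r}\subseteq\Omega$, and this ball is automatically $G$-invariant because each $g\in G$ is an isometry with $g\xi_{0}=\xi_{0}$. Given $\varepsilon>0$, first pick $u\in\Nscr\sts{\R^{N}}^{\sigma}_{G}$ with $\Ecal\sts{u\, ; \,\R^{N}}<\mathrm{m}\sts{\R^{N}}_{G}^{\sigma}+\varepsilon/2$. Next I would replace $u$ by a smooth, compactly supported, $\sigma$-equivariant function that is still close to the Nehari level: approximating $u$ in $\dot{H}^{s}$ by functions in $C_{c}^{\infty}\sts{\R^{N}}$ and then averaging each by $w\mapsto\frac{1}{\mu(G)}\int_{G}\sigma\sts{g}w\sts{gx}\,\d\mu$ — exactly the device used in the proof of \Cref{lem:weaksln}, which preserves smoothness and compact support since $G$ is compact, and does not increase the $\dot{H}^{s}$ norm — gives $v\in C_{c}^{\infty}\sts{\R^{N}}^{\sigma}_{G}$ as close to $u$ in $\dot{H}^{s}$ as we wish. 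Rescaling by the unique $t_{v}>0$ with $t_{v}v\in\Nscr\sts{\R^{N}}^{\sigma}_{G}$ and using the continuity of $\Ecal\sts{\cdot\, ; \,\R^{N}}$ and $\Ncal\sts{\cdot\, ; \,\R^{N}}$ (the latter through the Sobolev embedding \eqref{sblv:sharp}, and noting $t_{v}\to1$ as $v\to u$), I obtain $w\doteq t_{v}v\in C_{c}^{\infty}\sts{\R^{N}}^{\sigma}_{G}$ with $\Ncal\sts{w\, ; \,\R^{N}}=0$ and $\Ecal\sts{w\, ; \,\R^{N}}<\mathrm{m}\sts{\R^{N}}_{G}^{\sigma}+\varepsilon$.

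The decisive step is to squeeze the support of $w$ into $\Omega$ by the critical scaling while keeping the function $\sigma$-equivariant and on the Nehari manifold. For $\lambda>0$ set $w_{\lambda}\sts{x}\doteq\lambda^{-\frac{N-2s}{2}}w\sts{\lambda^{-1}\sts{x-\xi_{0}}}$. Because $g\xi_{0}=\xi_{0}$ for all $g\in G$ one checks directly that $w_{\lambda}\sts{gx}=\sigma\sts{g}w_{\lambda}\sts{x}$, so $w_{\lambda}$ is $\sigma$-equivariant; and since $\tfrac{N-2s}{2}$ is exactly the critical scaling exponent, $\norm{w_{\lambda}}=\norm{w}$ and $\int_{\R^{N}}\abs{w_{\lambda}\sts{x}}^{2^{\ast}_{s}}\dx=\int_{\R^{N}}\abs{w\sts{x}}^{2^{\ast}_{s}}\dx$, whence $\Ncal\sts{w_{\lambda}\, ; \,\R^{N}}=\Ncal\sts{w\, ; \,\R^{N}}=0$ and $\Ecal\sts{w_{\lambda}\, ; \,\R^{N}}=\Ecal\sts{w\, ; \,\R^{N}}$. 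Finally $\operatorname{supp}w_{\lambda}=\xi_{0}+\lambda\operatorname{supp}w\subseteq B\sts{\xi_{0},r}\subseteq\Omega$ as soon as $\lambda$ is small enough, so $w_{\lambda}\in C_{c}^{\infty}\sts{\Omega}^{\sigma}_{G}$ and, by the first paragraph, $\Ncal\sts{w_{\lambda}\, ; \,\Omega}=\Ncal\sts{w_{\lambda}\, ; \,\R^{N}}=0$, i.e. $w_{\lambda}\in\Nscr\sts{\Omega}^{\sigma}_{G}$. Therefore $\mathrm{m}\sts{\Omega}_{G}^{\sigma}\le\Ecal\sts{w_{\lambda}\, ; \,\Omega}=\Ecal\sts{w\, ; \,\R^{N}}<\mathrm{m}\sts{\R^{N}}_{G}^{\sigma}+\varepsilon$, and letting $\varepsilon\downarrow0$ completes the proof. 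The only steps that are not pure bookkeeping are the $\sigma$-equivariance of $w_{\lambda}$ — which is precisely what forces the scaling centre to be a $G$-fixed point and thereby explains the hypothesis $\Omega^{G}\neq\emptyset$ — and the scale invariance of $\Ecal$ and $\Ncal$ at the critical exponent; I do not expect either of these, nor the density step, to present a genuine obstacle.
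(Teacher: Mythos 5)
Your proof is correct and takes essentially the same approach as the paper: one inequality comes from $\Nscr\sts{\Omega}^{\sigma}_{G}\subseteq\Nscr\sts{\R^{N}}^{\sigma}_{G}$, and the other comes from critically rescaling compactly supported $\sigma$-equivariant Nehari test functions about a $G$-fixed point $\xi_{0}\in\Omega^{G}$ so that the support shrinks into $\Omega$ while the energy, the Nehari constraint, and the equivariance are all preserved. The only difference is that you spell out the density step (approximating a near-minimizer in $\Nscr\sts{\R^{N}}^{\sigma}_{G}$ by functions in $C_{c}^{\infty}\sts{\R^{N}}^{\sigma}_{G}$ via $G$-averaging and then rescaling back onto the Nehari manifold), whereas the paper simply selects its minimizing sequence directly in $\Nscr\sts{\R^{N}}^{\sigma}_{G}\cap C_{c}^{\infty}\sts{\R^{N}}$.
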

%
%
\begin{proof}
On one hand, by the embedding that $\Omega\subset\mathbb{R^{N}}$, we have
$$ \mathrm{m}\sts{\Omega}_{ G }^{\sigma} \geq \mathrm{m}\sts{\R^N}_{ G }^{\sigma}.$$
On the other hand, we fix $x_{_{0}}\in \Omega^{G}$ and choose a sequence $\{\varphi_{n}\}$ in $\Nscr\sts{\mathbb{R^{N}} }^{\sigma}_{G}\cap C_{c}^{\infty}(\mathbb{R^{N}})$ such that $\Ecal\sts{ \varphi_{n} \, ; \,\mathbb{R^{N}} }\rightarrow \mathrm{m}\sts{\R^N}_{ G }^{\sigma}$. Since $\varphi_{n}$ has compact support. we may choose $\lambda_{n}>0$ such that $$\text{supp}\widetilde{\varphi_{n}}=\text{supp}\lambda_{n}^{\frac{-(N-2s)}{2}}\varphi_{n}(\frac{x-x_{0}}{\lambda_{n}})\subset \Omega.$$
As $x_{0}$ is a $G$-fixed point, $\widetilde{\varphi_{n}}$ is $\sigma$-equivariant.
Using the fact that $$\parallel \varphi_{n}\parallel^{2}=\parallel \widetilde{\varphi_{n}}\parallel^{2}$$ and
$$\int_{\R^{N}}\mid \varphi_{n}\mid^{2_{s}^{\ast}}\dx=\int_{\Omega}\mid \widetilde{\varphi_{n}}\mid^{2_{s}^{\ast}}\dx
.$$
We have $\widetilde{\varphi_{n}}\in \Nscr\sts{\Omega} ^{\sigma}_{G}$, hence

  $$\mathrm{m}\sts{\Omega}_{ G }^{\sigma}\leq \Ecal\sts{\widetilde{\varphi_{n}}\, ; \,\Omega }  =\Ecal\sts{\varphi_{n}\, ; \,\mathbb{R^{N}} }\rightarrow \mathrm{m}\sts{\R^N}_{ G }^{\sigma},$$
  hence $ \mathrm{m}\sts{\Omega}_{ G }^{\sigma} \leq \mathrm{m}\sts{\R^N}_{ G }^{\sigma}$, which implies the result.
   \end{proof}

\subsection{Some useful estimates}

\begin{lemm}[{{\cite[Proposition 2.9]{BSY2018dcds}}}]
\label{lem:caccippoli}
Let $ f \in \sts{ \dot{H}^{s}\sts{\Omega} }' $, and if $u\in  \dot{H}^{s}\sts{\Omega}$ satisfies that
\begin{equation*}
  \iint_{\R^N\times \R^N}\frac{ \sts{ u\sts{x}-u\sts{y} }\sts{ \varphi\sts{x}-\varphi\sts{y} } }{ \abs{x-y}^{N+2s} }\dx\dy
  =
  \action{ f }{ \varphi },\quad\text{ for any } \varphi\in \dot{H}^{s}\sts{\Omega},
\end{equation*}
then for any open subset $\tilde{\Omega} $ of $\R^N$ with $ \tilde{\Omega}\cap \Omega\neq\emptyset $ and any nonnegative function
$ \phi\in C_{0}^{\infty}\sts{ \tilde{\Omega} } $, we have
\begin{align*}
  &
  \iint_{\tilde{\Omega} \times \tilde{\Omega}} \frac{ \abs{ u\sts{x}\phi\sts{x}-u\sts{y}\phi\sts{y} }^2 }{ \abs{x-y}^{N+2s} }\dx\dy
  \\
   \leq
   &
   C_{0}
   \iint_{\tilde{\Omega} \times \tilde{\Omega} } \frac{ \abs{ \phi\sts{x}-\phi\sts{y} } }{ \abs{x-y}^{N+2s} }\sts{ \abs{u\sts{x}}^2+\abs{u\sts{y}}^2 }\dx\dy
   \\
   &
   +
   C_{0}
   \sts{ \sup_{ y\in \supp{ \phi } }\int_{\R^N\setminus\tilde{\Omega}}\frac{ \abs{u\sts{x}} }{ \abs{ x-y }^{N+2s} }\dx }
   \int_{\tilde{\Omega} }\abs{u\sts{x}}\sts{\phi\sts{x}}^2\dx
   +
   C_{0}\abs{ \action{ f }{ u \varphi^2 } },
\end{align*}
where $C_{0}$ is an absolute constant.
\end{lemm}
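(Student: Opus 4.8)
The plan is to test the weak equation with $\varphi=u\phi^2$ and exploit an exact pointwise algebraic identity that converts the resulting bilinear expression into the localized Gagliardo seminorm of $u\phi$ plus a controllable remainder. First I would check that $\varphi:=u\phi^2$ is an admissible test function: since $u\in\dot H^s(\Omega)\subset\dot H^s(\R^N)$, $\phi\in C_0^\infty(\tilde\Omega)$, and (by \eqref{sblv:sharp}) $u\in L^{2^{\ast}_{s}}(\R^N)$, a Leibniz-type product estimate for the Gagliardo seminorm (using $s<1$) gives $u\phi^2\in\dot H^s(\R^N)$; since $u$ vanishes a.e.\ outside $\Omega$, so does $u\phi^2$, whence $u\phi^2\in\dot H^s(\Omega)$. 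Rigorously one would first carry out the computation below with $u$ replaced by a bounded truncation and then pass to the limit, which is where $u\in L^{2^{\ast}_{s}}(\R^N)$ is used to guarantee convergence of all the integrals; I suppress this.

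The key algebraic point is the exact identity, valid for all real numbers,
\[
\bigl(u(x)-u(y)\bigr)\bigl(u(x)\phi(x)^2-u(y)\phi(y)^2\bigr)=\abs{u(x)\phi(x)-u(y)\phi(y)}^2-u(x)u(y)\bigl(\phi(x)-\phi(y)\bigr)^2,
\]
obtained by expanding both sides. Substituting $\varphi=u\phi^2$ into the weak formulation and inserting this identity gives
\[
\iint_{\R^N\times\R^N}\frac{\abs{u(x)\phi(x)-u(y)\phi(y)}^2}{\abs{x-y}^{N+2s}}\dx\dy=\action{f}{u\phi^2}+\iint_{\R^N\times\R^N}\frac{u(x)u(y)\bigl(\phi(x)-\phi(y)\bigr)^2}{\abs{x-y}^{N+2s}}\dx\dy .
\]

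Then I would split $\R^N\times\R^N$ into the four blocks cut out by $\tilde\Omega$ and $\R^N\setminus\tilde\Omega$. On the left-hand side the block $(\R^N\setminus\tilde\Omega)\times(\R^N\setminus\tilde\Omega)$ vanishes (there $\phi\equiv0$) and the integrand is nonnegative everywhere, so the left-hand side is at least $\iint_{\tilde\Omega\times\tilde\Omega}\abs{u(x)\phi(x)-u(y)\phi(y)}^2\abs{x-y}^{-N-2s}\dx\dy$, which is the quantity to be bounded. For the remainder integral on the right: the block over $(\R^N\setminus\tilde\Omega)^2$ again vanishes; the block over $\tilde\Omega\times\tilde\Omega$ is, by $\abs{u(x)u(y)}\le\tfrac12(\abs{u(x)}^2+\abs{u(y)}^2)$, at most $\tfrac12\iint_{\tilde\Omega\times\tilde\Omega}(\abs{u(x)}^2+\abs{u(y)}^2)(\phi(x)-\phi(y))^2\abs{x-y}^{-N-2s}\dx\dy$, and since a cutoff may be taken with $0\le\phi\le1$ one has $(\phi(x)-\phi(y))^2\le\abs{\phi(x)-\phi(y)}$, which yields the first term of the asserted bound; on each of the two cross blocks, say $x\in\tilde\Omega$ and $y\in\R^N\setminus\tilde\Omega$, one has $\phi(y)=0$, so the integrand equals $\abs{u(x)}\abs{u(y)}\phi(x)^2\abs{x-y}^{-N-2s}$, and estimating the inner $y$-integral by $\sup_{x\in\supp{\phi}}\int_{\R^N\setminus\tilde\Omega}\abs{u(y)}\abs{x-y}^{-N-2s}\dy$ produces the second (nonlocal tail) term. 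Finally $\action{f}{u\phi^2}\le\abs{\action{f}{u\phi^2}}$ gives the third term, and collecting everything one may take $C_0=\max\{\tfrac12,2,1\}=2$.

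The main obstacle is not the arithmetic but the two points glossed above: (i) making the first step rigorous — that $u\phi^2$ is genuinely admissible and that the full-space double integrals may be split and manipulated termwise — which requires the product estimate in $\dot H^s$ together with a truncation/density argument; and (ii) the a priori finiteness of the nonlocal tail $\sup_{x\in\supp{\phi}}\int_{\R^N\setminus\tilde\Omega}\abs{u(y)}\abs{x-y}^{-N-2s}\dy$. The latter follows from $\mathrm{dist}(\supp{\phi},\R^N\setminus\tilde\Omega)>0$, Hölder's inequality and $u\in L^{2^{\ast}_{s}}(\R^N)$: since $(2^{\ast}_{s})'=\tfrac{2N}{N+2s}$, the kernel $\abs{z}^{-(N+2s)(2^{\ast}_{s})'}=\abs{z}^{-2N}$ is integrable away from the origin, giving a bound uniform in $x\in\supp{\phi}$.
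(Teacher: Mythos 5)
The paper does not prove this lemma; it is quoted verbatim from Proposition 2.9 of Brasco--Squassina--Yang \cite{BSY2018dcds}, so there is no internal proof to compare against. Your argument is the standard one and is essentially the argument that appears in that reference: test the weak equation with $u\phi^2$, use the exact algebraic identity
\[
\bigl(u(x)-u(y)\bigr)\bigl(u(x)\phi(x)^2-u(y)\phi(y)^2\bigr)=\abs{u(x)\phi(x)-u(y)\phi(y)}^2-u(x)u(y)\bigl(\phi(x)-\phi(y)\bigr)^2,
\]
split the full-space double integral into the four blocks produced by $\tilde\Omega$ and $\R^N\setminus\tilde\Omega$, and estimate each piece. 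The identity checks out (both sides expand to $u(x)^2\phi(x)^2+u(y)^2\phi(y)^2-u(x)u(y)\phi(x)^2-u(x)u(y)\phi(y)^2$), the block decomposition is correct, and the nonlocal tail is finite by the H\"older argument you give. One remark: the step $(\phi(x)-\phi(y))^2\le\abs{\phi(x)-\phi(y)}$ requires $0\le\phi\le1$, which you flag; this normalization is implicit in the lemma as stated (the first term on the right has $\abs{\phi(x)-\phi(y)}$ to the first power, which is not scale-invariant in $\phi$, so it is really a statement about cutoffs valued in $[0,1]$). Without that normalization one would instead write $(\phi(x)-\phi(y))^2\le 2\norm{\phi}_{L^\infty}\abs{\phi(x)-\phi(y)}$ and absorb $\norm{\phi}_{L^\infty}$ into the constant. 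The rest — admissibility of $u\phi^2$ via truncation and density, and reading $\phi^2$ for the typo $\varphi^2$ in the last term — is handled appropriately.
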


\begin{prop}[{\cite[Proposition 2.3]{BP2016adv}}]
\label{prop:lcestmt}
Let $0<r<R$. If $ u\in\dot{H}^s\sts{ B\sts{0,r}  } $, then there exists a positive constant $C\sts{ N,s,\frac{R}{r} }$ such that
\begin{equation*}
  \sts{ \int_{ B\sts{0,r} }\abs{u\sts{x}}^{2^{\ast}_{s} }\dx }^{ \frac{ 2 }{2^{\ast}_{s} } }
  \leq
  C\sts{ N,s,\frac{R}{r} }
  \iint_{B\sts{0,R}\times B\sts{0,R}}\frac{ \abs{ u\sts{x}-u\sts{y} }^2 }{ \abs{x-y}^{N+2s} }\dx\dy,
\end{equation*}
where the constant $C\sts{ N,s,\frac{R}{r} }$ goes to $\infty$ as $R$ goes to $r$.
\end{prop}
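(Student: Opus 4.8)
\emph{Strategy.} The plan is to deduce this \emph{local} Sobolev inequality from the \emph{global} one \eqref{sblv:sharp} by controlling the full Gagliardo seminorm of $u$ on $\R^N\times\R^N$ in terms of the truncated quantity
\[
  D:=\iint_{B\sts{0,R}\times B\sts{0,R}}\frac{\abs{u\sts{x}-u\sts{y}}^2}{\abs{x-y}^{N+2s}}\dx\dy .
\]
Recall that $u\in\dot H^s\sts{B\sts{0,r}}$ means $u=0$ a.e.\ outside $B\sts{0,r}$, so that $\int_{B\sts{0,r}}\abs{u}^{2^{\ast}_{s}}\dx=\int_{\R^N}\abs{u}^{2^{\ast}_{s}}\dx$; hence once the seminorm comparison is established, \eqref{sblv:sharp} finishes the proof, and it only remains to keep track of how the constant degenerates as $R\to r$.

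\emph{Step 1: splitting the global seminorm.} Decompose $\R^N\times\R^N$ into $B\sts{0,R}\times B\sts{0,R}$, the two cross regions $B\sts{0,R}\times\sts{\R^N\setminus B\sts{0,R}}$ and $\sts{\R^N\setminus B\sts{0,R}}\times B\sts{0,R}$, and $\sts{\R^N\setminus B\sts{0,R}}\times\sts{\R^N\setminus B\sts{0,R}}$. On the last region both $u\sts{x}$ and $u\sts{y}$ vanish (here $R>r$ is used), so it contributes nothing; on a cross region $u\sts{y}=0$, hence $\abs{u\sts{x}-u\sts{y}}^2=\abs{u\sts{x}}^2$, which is nonzero only for $x\in B\sts{0,r}$. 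Since any $y\in\R^N\setminus B\sts{0,R}$ and $x\in B\sts{0,r}$ satisfy $\abs{x-y}>R-r$, a computation in polar coordinates gives $\int_{\R^N\setminus B\sts{0,R}}\abs{x-y}^{-N-2s}\dy\le C\sts{N,s}\sts{R-r}^{-2s}$, whence
\[
  \iint_{\R^N\times\R^N}\frac{\abs{u\sts{x}-u\sts{y}}^2}{\abs{x-y}^{N+2s}}\dx\dy\ \le\ D+2C\sts{N,s}\sts{R-r}^{-2s}\int_{B\sts{0,r}}\abs{u\sts{x}}^2\dx .
\]

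\emph{Step 2: controlling the $L^2$ mass by $D$ (the crux).} It is tempting to bound $\int_{B\sts{0,r}}\abs{u}^2$ via Hölder's inequality in terms of $\bigl(\int_{B\sts{0,r}}\abs{u}^{2^{\ast}_{s}}\dx\bigr)^{2/2^{\ast}_{s}}$ and then absorb this term into the left-hand side of the final estimate; but the absorption would require a smallness condition that fails precisely as $R\to r$, so that route is the wrong one. Instead, one exploits that $u$ vanishes on the annulus $A:=B\sts{0,R}\setminus\overline{B\sts{0,r}}$, which is nonempty exactly because $R>r$: for $x\in B\sts{0,r}$ and $y\in A$ we have $\abs{u\sts{x}-u\sts{y}}^2=\abs{u\sts{x}}^2$ and $\abs{x-y}<r+R\le 2R$, so restricting the integral defining $D$ to $B\sts{0,r}\times A$ yields
\[
  D\ \ge\ \iint_{B\sts{0,r}\times A}\frac{\abs{u\sts{x}}^2}{\abs{x-y}^{N+2s}}\dx\dy\ \ge\ \frac{\abs{B\sts{0,R}}-\abs{B\sts{0,r}}}{\sts{2R}^{N+2s}}\int_{B\sts{0,r}}\abs{u\sts{x}}^2\dx ,
\]
i.e.\ the localized seminorm $D$ already provides a Poincaré-type control $\int_{B\sts{0,r}}\abs{u}^2\le\dfrac{\sts{2R}^{N+2s}}{\abs{B\sts{0,R}}-\abs{B\sts{0,r}}}\,D$ through the pairs $(x,y)$ straddling $\partial B\sts{0,r}$.

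\emph{Step 3: conclusion.} Inserting Step 2 into Step 1 and then applying \eqref{sblv:sharp},
\[
  \Bigl(\int_{B\sts{0,r}}\abs{u\sts{x}}^{2^{\ast}_{s}}\dx\Bigr)^{2/2^{\ast}_{s}}=\Bigl(\int_{\R^N}\abs{u}^{2^{\ast}_{s}}\dx\Bigr)^{2/2^{\ast}_{s}}\le\mathrm{S}\sts{N,s}\Bigl[\,1+\frac{2C\sts{N,s}\sts{2R}^{N+2s}}{\sts{R-r}^{2s}\bigl(\abs{B\sts{0,R}}-\abs{B\sts{0,r}}\bigr)}\Bigr]D .
\]
Writing $R=tr$ with $t=R/r>1$, the powers of $r$ in the bracket cancel, so the constant depends only on $N$, $s$ and $R/r$; and since $\sts{R-r}^{-2s}\to\infty$ while $\abs{B\sts{0,R}}-\abs{B\sts{0,r}}\to0$ as $R\to r$, this constant tends to $\infty$, as asserted. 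The only genuinely delicate point of the argument is Step 2: the naive Hölder bound is useless here, and one has to notice that $D$ itself controls the $L^2$ mass of $u$ on $B\sts{0,r}$ precisely because $u$ is forced to vanish on the surrounding annulus $B\sts{0,R}\setminus B\sts{0,r}$.
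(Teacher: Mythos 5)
Your proof is correct. Note, however, that the paper does not prove \Cref{prop:lcestmt} at all: it is imported verbatim from Brasco--Parini (\cite[Proposition 2.3]{BP2016adv}), so there is no in-paper argument to compare against. Your self-contained derivation is sound and the bookkeeping is right: Step 1 correctly collapses the two ``far'' cross regions using $|x-y|>R-r$ for $x\in B(0,r)$, $y\notin B(0,R)$, the diagonal far block vanishes since $u$ is supported in $B(0,r)$, and Step 3's scaling check confirms the constant depends only on $N$, $s$, $R/r$ and blows up as $R\downarrow r$. The genuinely nontrivial point is exactly where you flag it, in Step 2: instead of the absorption scheme (which cannot work uniformly as $R\to r$), you extract a Poincar\'e-type bound
\[
\int_{B(0,r)}|u|^2\,\dx \ \le\ \frac{(2R)^{N+2s}}{\,|B(0,R)|-|B(0,r)|\,}\,
\iint_{B(0,R)\times B(0,R)}\frac{|u(x)-u(y)|^2}{|x-y|^{N+2s}}\,\dx\dy
\]
directly from the pairs $(x,y)\in B(0,r)\times\bigl(B(0,R)\setminus\overline{B(0,r)}\bigr)$ on which $u(y)=0$ and $|x-y|<2R$; this is legitimate precisely because $R>r$ forces a nonempty annulus where $u$ vanishes. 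Combined with the global inequality \eqref{sblv:sharp} this yields the claimed local estimate with the correct degeneracy of the constant. This is essentially the standard route for such local fractional Sobolev inequalities (and is in the spirit of the argument in Brasco--Parini), but your write-up is complete and needs no amendment.
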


In order to describe the behaviour of Palais–Smale sequences for the variational problem in \eqref{eq:016},
we define L\'{e}vy's concentration function for any $u\in L^{2^{\ast}_{s}}\sts{\Omega}$ as follows,
\begin{equation*}
  \Qcal_{u}\sts{r}:= \sup_{ z\in\R^N }\int_{ \ball{z}{r} }\abs{ u\sts{x} }^{ 2^{\ast}_{s} }\dx.
\end{equation*}
We collect here some facts about L\'{e}vy's concentration function.
\begin{prop}
\label{prop:levy}
Let $\Omega$ be a bounded domain of $\R^N$. If $u\in L^{2^{\ast}_{s}}\sts{\Omega}$ with $u\sts{x}=0$ a.e. for $x\in\R^N\setminus\Omega$, then the following statements hold.
\begin{enumerate}[label=${(}${\roman{*}}${)}$, ref=${(}${\roman{*}}${)}$]
\item
\label{levy:1}
 For any $\delta$ with $0<\delta<\int_{\Omega}\abs{u\sts{x}}^{ 2^{\ast}_{s} }\dx$, there exists $r>0$, $z_{\max}\in\R^N$ such that
    \begin{equation*}
      \int_{ \ball{z_{\max}}{r} }\abs{ u\sts{x} }^{ 2^{\ast}_{s} }\dx
      =
      \delta,
    \end{equation*}
    where
$
      \dist{z_{\max}}{\Omega}\leq r.
$
\item
\label{levy:2}
 For any $r>0$ and $\xi\in\R^N$,  we have
    \begin{equation*}
      \Qcal_{u_{r,\xi}}\sts{1} = \Qcal_{u}\sts{r},
    \end{equation*}
    where
    $
          u_{r,\xi}\sts{x} = r^{ \frac{N-2s}{2} }u\sts{ r x+\xi }.
    $
\item
\label{levy:3}
 For any $0<r<R$, we have
    \begin{equation}
    \label{eq:018}
      \Qcal_{u}\sts{R}\leq \lfloor\sts{N+1}\frac{R}{r}\rfloor\Qcal_{u}\sts{r}.
    \end{equation}
\end{enumerate}
\end{prop}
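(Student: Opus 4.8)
I will establish the three properties of Lévy's concentration function $\Qcal_u$ in turn, each by elementary covering or rescaling arguments; the only genuine input from the theory is the continuity of $r\mapsto \Qcal_u(r)$, which follows from absolute continuity of the integral $\int_{B(z,r)}|u|^{2^\ast_s}\,\dx$ in $r$ (uniformly in $z$, since $u\in L^{2^\ast_s}$). For part \ref{levy:1}, I would first observe that $\Qcal_u$ is nondecreasing in $r$, that $\Qcal_u(r)\to 0$ as $r\to 0^+$ (by the equi-integrability just mentioned, together with the fact that a ball of small radius meets $\Omega$ in a set of small measure, or more simply: $\sup_z\int_{B(z,r)}|u|^{2^\ast_s}\le$ the modulus of continuity of the measure $|u|^{2^\ast_s}\,\dx$ evaluated at $|B(0,r)|$), and that $\Qcal_u(r)\to\int_\Omega |u|^{2^\ast_s}\,\dx$ as $r\to\infty$ since $\Omega$ is bounded. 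Given $0<\delta<\int_\Omega|u|^{2^\ast_s}\,\dx$, the intermediate value theorem applied to the continuous function $\Qcal_u$ produces a radius $r>0$ with $\Qcal_u(r)=\delta$; I then pick $z_{\max}$ achieving (or nearly achieving, then pass to a limit using that the map $z\mapsto\int_{B(z,r)}|u|^{2^\ast_s}$ is continuous and tends to $0$ as $|z|\to\infty$, so the supremum is attained) the supremum defining $\Qcal_u(r)$. If $\dist(z_{\max},\Omega)>r$ then $B(z_{\max},r)\cap\Omega=\emptyset$, forcing $\int_{B(z_{\max},r)}|u|^{2^\ast_s}=0=\delta$, a contradiction; hence $\dist(z_{\max},\Omega)\le r$.

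Part \ref{levy:2} is a direct change of variables. For fixed $r>0$, $\xi\in\R^N$ and any $z\in\R^N$, the substitution $y=rx+\xi$ gives
\begin{equation*}
  \int_{B(z,1)}|u_{r,\xi}(x)|^{2^\ast_s}\,\dx
  = r^{\frac{(N-2s)2^\ast_s}{2}}\int_{B(z,1)}|u(rx+\xi)|^{2^\ast_s}\,\dx
  = r^{\frac{(N-2s)2^\ast_s}{2}-N}\int_{B(rz+\xi,\,r)}|u(y)|^{2^\ast_s}\,\dy,
\end{equation*}
and since $2^\ast_s=\frac{2N}{N-2s}$ one has $\frac{(N-2s)2^\ast_s}{2}-N=0$, so the prefactor is $1$. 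Taking the supremum over $z\in\R^N$ and noting that $z\mapsto rz+\xi$ is a bijection of $\R^N$, the right-hand side becomes $\sup_{w\in\R^N}\int_{B(w,r)}|u|^{2^\ast_s}=\Qcal_u(r)$, which is exactly $\Qcal_{u_{r,\xi}}(1)=\Qcal_u(r)$.

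Part \ref{levy:3} is the covering estimate. Fix $0<r<R$ and any $z\in\R^N$; I would cover the ball $B(z,R)$ by balls of radius $r$ whose centers lie on a lattice of spacing proportional to $r$. A standard volume count shows $B(0,R)$ can be covered by at most $\lfloor (N+1)\frac{R}{r}\rfloor$ such balls (this is the combinatorial heart of the statement, and I would cite or reproduce the elementary packing bound). Writing $B(z,R)\subseteq\bigcup_{j=1}^{M}B(z_j,r)$ with $M\le\lfloor(N+1)\frac{R}{r}\rfloor$, subadditivity of the integral gives
\begin{equation*}
  \int_{B(z,R)}|u|^{2^\ast_s}\,\dx
  \le \sum_{j=1}^{M}\int_{B(z_j,r)}|u|^{2^\ast_s}\,\dx
  \le M\,\Qcal_u(r)
  \le \Big\lfloor (N+1)\tfrac{R}{r}\Big\rfloor\Qcal_u(r),
\end{equation*}
and taking the supremum over $z$ yields \eqref{eq:018}. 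The main obstacle is not any single step but getting the covering constant exactly as stated; everything else is bookkeeping with the scaling exponent and equi-integrability. I would be careful to record that the attainment of the supremum in the definition of $\Qcal_u$ (used in \ref{levy:1}) relies on $u$ having bounded support, so the integral over $B(z,r)$ vanishes for $|z|$ large.
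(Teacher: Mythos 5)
Your argument follows the paper's for parts (ii) and (iii); for part (i), where the paper simply cites \cite[Lemma 3.1]{BSY2018dcds}, you supply a correct self-contained proof via monotonicity, uniform absolute continuity, the intermediate value theorem for $r\mapsto\Qcal_u(r)$, attainment of the supremum in $z$ (valid precisely because $u$ has bounded support, as you note), and the contradiction if $\dist{z_{\max}}{\Omega}>r$. That part is a genuine and correct filling-in of detail.

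The one real problem is in (iii). You write that ``a standard volume count shows $B(0,R)$ can be covered by at most $\lfloor(N+1)\frac{R}{r}\rfloor$ balls'' of radius $r$. A volume count gives a \emph{lower} bound of order $(R/r)^N$ on any covering number, so it cannot also produce the linear-in-$R/r$ upper bound you assert; for $R/r$ large one has $\lfloor(N+1)\frac{R}{r}\rfloor<(R/r)^N$, so no such cover exists, and the inequality \eqref{eq:018} itself then fails in that regime (take $u\equiv 1$ on $\Omega=B(0,1)$ and $0<r<R\ll1$, where $\Qcal_u(R)/\Qcal_u(r)=(R/r)^N$). You sensed the trouble---``getting the covering constant exactly as stated'' is indeed the crux---but the appeal to a volume count is the wrong justification and in fact undercuts the claim. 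The paper invokes \cite[Corollary 1.3]{G2018dcg} for this step rather than a volume argument; whatever the precise form there, the only downstream use of (iii) is with $R/r=N_0+1$ fixed (see \eqref{eq:031}), so a bound $\Qcal_u(R)\leq C(N,R/r)\,\Qcal_u(r)$ with any explicit constant $C(N,R/r)$ would serve just as well. Your instinct to worry about the constant is sound; the combinatorial bound should be taken from the cited covering result and its stated form double-checked, not derived from a volume count.
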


\begin{proof}
\ref{levy:1}. The proof is standard, please  refer  to \cite[Lemma 3.1]{BSY2018dcds} for the details.

\ref{levy:2}.  For any $z\in\R^N$ , we have by the change of variables that
\begin{align}
\label{eq:017}
\int_{ \ball{z}{1} }\abs{ u_{r,\xi}\sts{x} }^{ 2^{\ast}_{s} }\dx
  =&
    \int_{ \ball{rz+\xi}{r} }\abs{ u\sts{x} }^{ 2^{\ast}_{s} }\dx.
\end{align}
Taking the supremum over $\R^N$ on both sides of \eqref{eq:017}, we obtain that
\begin{align*}
  \Qcal_{u_{r,\xi}}\sts{1}
  =&
  \sup_{ z\in\R^N }\int_{ \ball{z}{1} }\abs{ u_{r,\xi}\sts{x} }^{ 2^{\ast}_{s} }\dx
  \\
  =&
  \sup_{ z\in\R^N }\int_{ \ball{rz+\xi}{r} }\abs{ u\sts{x} }^{ 2^{\ast}_{s} }\dx
  \\
  =&
  \sup_{ z\in\R^N }\int_{ \ball{z}{r} }\abs{ u\sts{x} }^{ 2^{\ast}_{s} }\dx
  \\
  =&\;
  \Qcal_{u}\sts{r}.
\end{align*}

\ref{levy:3}. Let $x\in\R^N$ and $R>0$,  there exists $\lfloor\sts{N+1}\frac{R}{r}\rfloor$ balls
${\ball{z_1}{r}}$, $\ball{z_2}{r}$, $\cdots$, ${\ball{z_{\lfloor\sts{N+1}\frac{R}{r}\rfloor}}{r}} $  in $\R^N$ such that their union
cover $\ball{z}{R}$ (see for instance \cite[Corollary 1.3]{G2018dcg} ), therefore
\begin{equation*}
  \int_{ \ball{z}{R} }\abs{u\sts{x}}^{ 2^{\ast}_{s} }\dx
  \leq
  \sum_{j=1}^{ \lfloor\sts{N+1}\frac{R}{r}\rfloor }\int_{ \ball{z_j}{r} }\abs{u\sts{x}}^{ 2^{\ast}_{s} }\dx,
\end{equation*}
which implies that
\begin{equation}
\label{eq:019}
  \int_{ \ball{z}{R} }\abs{u\sts{x}}^{ 2^{\ast}_{s} }\dx
  \leq
  { \lfloor\sts{N+1}\frac{R}{r}\rfloor }\Qcal_{u}\sts{r}.
\end{equation}
Taking the supremum to the left hand side of \eqref{eq:019}, we can obtain  \eqref{eq:018}.

This ends the proof of \Cref{prop:levy}.
\end{proof}

\section{Palais-Smale sequences}
\label{sec:ps}
As a consequence of a general minimax method, using Ekeland's $\varepsilon$-variational principle (see, for instance \cite{AM2007,Willem1997}), 
we have the following well-known result.
\begin{lemm}
\label{hari:cd}
Suppose that  the assumptions of \Cref{lem:ifntdim} hold. Let $\mathrm{m}\sts{\Omega}_{ G }^{\sigma}$ be defined by \eqref{eq:015}. Then there exists a sequence $ \left\{ u_n \right\}_{n=1}^{\infty}\subseteq \dot{H}^s\sts{\Omega}_{G}^{\sigma}$ such that
\begin{align*}
   \Ecal\sts{ u_n  \, ; \,\Omega }\rightarrow \mathrm{m}\sts{\Omega}_{ G }^{\sigma},
   \text{~~and~~}
   \nabla\Ecal\sts{ u_n  \, ; \,\Omega }\rightarrow 0 \text{  in  } \sts{ \dot{H}^s\sts{\Omega}_{G}^{\sigma} }{'},
   \text{~~as~~} n\rightarrow \infty.
\end{align*}
\end{lemm}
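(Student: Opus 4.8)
The statement is a standard consequence of the mountain pass theorem combined with Ekeland's variational principle, but with the twist that the whole argument must be carried out in the closed subspace $\dot{H}^s(\Omega)^{\sigma}_G$ rather than in the full space $\dot{H}^s(\Omega)$. The plan is therefore as follows. First I would verify that $\Ecal(\cdot\,;\,\Omega)$ restricted to $\dot{H}^s(\Omega)^{\sigma}_G$ has the mountain pass geometry: by \Cref{lem:nehari}\ref{lem:pstv:e} there is a small ball around $0$ on which $\Ecal>0$, and one can check (using \eqref{sblv:sharp} and the homogeneity of the nonlinear term, exactly as in the computation of $\Ecal(\gamma_{\Nscr,u}(t)\,;\,\Omega)$ in the proof of \Cref{lem:hhr2mp}) that on every ray $t\mapsto tu$ with $u\neq 0$ the functional $\Ecal(tu\,;\,\Omega)=\tfrac12 t^2\|u\|^2-\tfrac{1}{2^{\ast}_s}t^{2^{\ast}_s}\int_\Omega|u|^{2^{\ast}_s}$ eventually becomes negative; hence the class $\Theta$ in \eqref{eq:016} is nonempty and $\mathrm{m}(\Omega)^{\sigma}_{G}>0$ by \Cref{lem:nehari} together with the corollary following it. This puts us in the setting of the quantitative deformation / Ekeland argument.

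Next I would apply Ekeland's $\varepsilon$-variational principle to the complete metric space $\Theta$ (which is a complete metric space under the uniform distance, since $\dot{H}^s(\Omega)^{\sigma}_G$ is a closed subspace of a Banach space, hence complete) and the continuous functional $\gamma\mapsto \max_{t\in[0,1]}\Ecal(\gamma(t)\,;\,\Omega)$, which is bounded below by $\mathrm{m}(\Omega)^{\sigma}_{G}$. For each $n$ this produces a path $\gamma_n\in\Theta$ that is nearly optimal and nearly stationary in the $\varepsilon_n$-sense; the classical deformation-lemma argument (done entirely inside $\dot{H}^s(\Omega)^{\sigma}_G$, using the pseudo-gradient flow for $\Ecal$ restricted to that subspace) then yields a point $u_n=\gamma_n(t_n)$ on the path with $\Ecal(u_n\,;\,\Omega)\to \mathrm{m}(\Omega)^{\sigma}_{G}$ and $\|\nabla\Ecal(u_n\,;\,\Omega)\|_{(\dot{H}^s(\Omega)^{\sigma}_G)'}\to 0$. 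This is the content of the standard minimax principle cited (Willem, Ambrosetti–Malchiodi), and since all the geometric hypotheses have been checked in the equivariant subspace, the conclusion holds there. Alternatively, one may obtain the Palais–Smale sequence directly on the Nehari manifold $\Nscr(\Omega)^{\sigma}_G$ using \Cref{lem:natcstr} and the identification $\mathrm{m}(\Omega)^{\sigma}_{\Nscr,G}=\mathrm{m}(\Omega)^{\sigma}_{\mathrm{MP},G}$ from \Cref{lem:hhr2mp}; either route produces the required sequence in $\dot{H}^s(\Omega)^{\sigma}_G$.

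The only genuine point requiring care — the main (though minor) obstacle — is to ensure that the pseudo-gradient vector field and the associated deformation used in the minimax argument stay inside the \emph{equivariant} subspace $\dot{H}^s(\Omega)^{\sigma}_G$, so that the resulting Palais–Smale sequence lies in that subspace and the gradient is measured against $(\dot{H}^s(\Omega)^{\sigma}_G)'$. This is automatic because $G$ acts on $\dot{H}^s(\Omega)$ by isometries commuting with $\Ecal$ (by \eqref{norm} and the $G$-invariance of $\Omega$), so $\dot{H}^s(\Omega)^{\sigma}_G$ is exactly the fixed-point set of the induced isometric action $u\mapsto\sigma(g)^{-1}u(g^{-1}\cdot)$, and one can average any pseudo-gradient over $G$ (as in the averaging trick used in \Cref{lem:weaksln}) to produce an equivariant one tangent to the subspace; the flow it generates then preserves $\dot{H}^s(\Omega)^{\sigma}_G$. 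With this observation the cited minimax machinery applies verbatim and the lemma follows.
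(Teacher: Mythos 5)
Your proposal is correct and takes essentially the same route as the paper: the paper's proof is a one-line appeal to Ekeland's $\varepsilon$-variational principle applied to the mountain-pass level $\mathrm{m}(\Omega)_{\mathrm{MP},G}^{\sigma}$, followed by the identification $\mathrm{m}(\Omega)_{\mathrm{MP},G}^{\sigma}=\mathrm{m}(\Omega)^{\sigma}_{\Nscr,G}=\mathrm{m}(\Omega)_G^{\sigma}$ from \Cref{lem:hhr2mp} and \eqref{eq:015}. You have simply unwound the standard details behind that citation — the mountain-pass geometry in the equivariant subspace (via \Cref{lem:nehari}, which also yields $\mathrm{m}(\Omega)_G^{\sigma}\geq\frac{s}{N}\mathrm{S}(N,s)^{-N/2s}>0$), the nonemptiness of $\Theta$ from \Cref{lem:ifntdim}, the Ekeland/deformation step, and the observation that the pseudo-gradient can be taken equivariant so the flow preserves $\dot{H}^s(\Omega)^{\sigma}_G$ — all of which the paper treats as the ``well-known result'' it is referencing.
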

\begin{proof}
  By Ekeland's $\varepsilon$-variational principle, there exists a sequence
$ \left\{ u_n \right\}_{n=1}^{\infty}\subseteq \dot{H}^s\sts{\Omega}_{G}^{\sigma}$ satisfying 
\begin{align*}
   \Ecal\sts{ u_n  \, ; \,\Omega }\rightarrow \mathrm{m}\sts{\Omega}_{ \mathrm{MP},G }^{\sigma},
   \text{~~and~~}
   \nabla\Ecal\sts{ u_n  \, ; \,\Omega }\rightarrow 0 \text{  in  } \sts{ \dot{H}^s\sts{\Omega}_{G}^{\sigma} }{'},
   \text{~~as~~} n\rightarrow \infty.
\end{align*}
By \eqref{eq:015}, we obtain that  
\begin{align*}
   \Ecal\sts{ u_n  \, ; \,\Omega }\rightarrow \mathrm{m}\sts{\Omega}_{ G }^{\sigma},
   \text{~~and~~}
   \nabla\Ecal\sts{ u_n  \, ; \,\Omega }\rightarrow 0 \text{  in  } \sts{ \dot{H}^s\sts{\Omega}_{G}^{\sigma} }{'},
   \text{~~as~~} n\rightarrow \infty.
\end{align*}
This ends the proof of \Cref{hari:cd}.
\end{proof}
By the change of variables, we have the following result.
\begin{lemm}
\label{lem:sclivrc}
Under the assumptions of \Cref{lem:ifntdim}, for any positive number $\lambda$ and any point $\xi$ in $\R^N$ satisfying
$ G\cdot \xi=\{\xi\}$, let us define the domain
\begin{equation*}
  \Omega_{\xi,\lambda}\doteq\setcdt{ x }{ x\in \frac{1}{\lambda}\sts{ \Omega-\xi } },
\end{equation*}
and the function
\begin{equation*}
  u_{\xi,\lambda}\sts{x}
  \doteq
  \lambda^{ \frac{N-2s}{2} }u\sts{ \lambda x+\xi }.
\end{equation*}
Then the following statements hold :
\begin{enumerate}[label=${(}${\Roman{*}}${)}$, ref=${(}${\Roman{*}}${)}$]
\item $u\in \dot{H}^s\sts{\Omega}_{ G }^{\sigma}$ if and only if
    $u_{\xi,\lambda}\in \dot{H}^s\sts{ \Omega_{\xi,\lambda} }_{ G }^{\sigma}$, moreover, we have
    \begin{equation*}
      \norm{ u }=\norm{ u_{\xi,\lambda} },
      \quad
      \text{and}
      \quad
      \int_{ \Omega }\abs{ u\sts{x} }^{ 2^{\ast}_{s} }\dx
      =
      \int_{ \Omega_{\xi,\lambda} }\abs{ u_{\xi,\lambda}\sts{x} }^{ 2^{\ast}_{s} }\dx.
    \end{equation*}
\item if $u\in \dot{H}^s\sts{\Omega}_{ G }^{\sigma}$, then we have
\begin{equation*}
  {{\sup}}_{\substack{ \phi\in \dot{H}^s\sts{\Omega}_{ G }^{\sigma}\\ \norm{ \phi }=1 }  }\action{ \grad{\Ecal}\sts{u\, ; \,\Omega } }{ \phi }
  =
  {{\sup}}_{\substack{ \psi\in \dot{H}^s\sts{\Omega}_{ G }^{\sigma}\\ \norm{ \psi }=1 }  }\action{ \grad{\Ecal}\sts{u_{\xi,\lambda}\, ; \,\Omega_{\xi,\lambda} } }{ \psi }.
\end{equation*}
\end{enumerate}
\end{lemm}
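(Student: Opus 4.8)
The plan is to derive both statements from the single change of variables $x\mapsto\lambda x+\xi$ applied to the conformal rescaling $u_{\xi,\lambda}\sts{x}=\lambda^{\frac{N-2s}{2}}u\sts{\lambda x+\xi}$; the point is that this substitution preserves each of the three ingredients that build $\Ecal$ and its differential — the Gagliardo bilinear form, the $L^{2^{\ast}_{s}}$-type term, and the $\sigma$-equivariance under $G$ — so that every identity in the statement reduces to a bookkeeping of scaling exponents.

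For the structural half of (I), I would first observe that $u_{\xi,\lambda}\sts{x}\neq 0$ holds exactly when $\lambda x+\xi\in\Omega$, i.e. when $x\in\frac{1}{\lambda}\sts{\Omega-\xi}=\Omega_{\xi,\lambda}$, so that $u$ vanishes a.e. outside $\Omega$ precisely when $u_{\xi,\lambda}$ vanishes a.e. outside $\Omega_{\xi,\lambda}$. For the equivariance I would invoke the hypothesis $G\cdot\xi=\{\xi\}$: since each $g\in G\subset O\sts{N}$ is linear and fixes $\xi$, one has $g\sts{\lambda x+\xi}=\lambda gx+\xi$, which shows at once that $\Omega_{\xi,\lambda}$ is $G$-invariant and that $u_{\xi,\lambda}\sts{gx}=\lambda^{\frac{N-2s}{2}}u\sts{g\sts{\lambda x+\xi}}=\sigma\sts{g}\,u_{\xi,\lambda}\sts{x}$. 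Hence $u\in\dot{H}^s\sts{\Omega}_{G}^{\sigma}$ if and only if $u_{\xi,\lambda}\in\dot{H}^s\sts{\Omega_{\xi,\lambda}}_{G}^{\sigma}$. The two displayed identities in (I) then follow by substituting $x'=\lambda x+\xi$, $y'=\lambda y+\xi$ (so $\dx=\lambda^{-N}\dx'$, $\dy=\lambda^{-N}\dy'$ and $\abs{x-y}^{N+2s}=\lambda^{-(N+2s)}\abs{x'-y'}^{N+2s}$): the norm picks up the net factor $\lambda^{(N-2s)}\cdot\lambda^{-2N}\cdot\lambda^{N+2s}=1$, while the $L^{2^{\ast}_{s}}$-term picks up $\lambda^{\frac{N-2s}{2}\cdot 2^{\ast}_{s}}\cdot\lambda^{-N}=1$ because $\frac{N-2s}{2}\cdot 2^{\ast}_{s}=N$.

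For (II), the key step is the pointwise identity $\action{\grad{\Ecal}\sts{u\,;\,\Omega}}{\phi}=\action{\grad{\Ecal}\sts{u_{\xi,\lambda}\,;\,\Omega_{\xi,\lambda}}}{\phi_{\xi,\lambda}}$, valid for every $\phi\in\dot{H}^s\sts{\Omega}_{G}^{\sigma}$, which I would prove by the same substitution: the bilinear part again contributes the net factor $\lambda^{(N-2s)-2N+(N+2s)}=1$, and the term $\int_{\Omega}\abs{u}^{2^{\ast}_{s}-2}u\,\phi\,\dx$ contributes $\lambda^{\frac{N-2s}{2}(2^{\ast}_{s}-2)+2\cdot\frac{N-2s}{2}-N}=\lambda^{\frac{N-2s}{2}\cdot 2^{\ast}_{s}-N}=1$. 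By (I) the linear map $\phi\mapsto\phi_{\xi,\lambda}$, together with its inverse rescaling, is a norm-preserving bijection of $\dot{H}^s\sts{\Omega}_{G}^{\sigma}$ onto $\dot{H}^s\sts{\Omega_{\xi,\lambda}}_{G}^{\sigma}$; taking the supremum over $\norm{\phi}=1$ in the identity above and re-indexing via $\psi=\phi_{\xi,\lambda}$ then yields exactly the claimed equality of the two suprema. I do not anticipate a genuine obstacle in this argument; the one place demanding attention is the equivariance check, where the fixed-point hypothesis $G\cdot\xi=\{\xi\}$ together with the linearity of the elements of $O\sts{N}$ is needed in order to obtain $g\sts{\lambda x+\xi}=\lambda gx+\xi$ — once that is in hand, everything else is a routine tracking of scaling powers.
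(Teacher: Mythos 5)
Your proof is correct and is exactly the change-of-variables computation the paper tacitly invokes (the paper states the lemma with only the phrase ``By the change of variables'' and no written proof). You correctly use linearity of $g\in O(N)$ together with $g\xi=\xi$ to get $g(\lambda x+\xi)=\lambda gx+\xi$, track the scaling exponents so that each term is invariant, and observe in part (II) that $\phi\mapsto\phi_{\xi,\lambda}$ is an isometric bijection between the equivariant spaces (which, incidentally, shows the index set of the second supremum in the paper's statement should read $\dot H^s(\Omega_{\xi,\lambda})_G^\sigma$ rather than $\dot H^s(\Omega)_G^\sigma$ — a typo your argument implicitly corrects).
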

The following geometrical lemma will be used to show \Cref{lem:ps}.
\begin{lemm}[\cite{Clapp2016jde,CS2020cpde}]
\label{lem:geom}
Let $G$ be a closed subgroup of the group $O\sts{N}$, $\{x_n\}_{n=1}^{\infty}$ be a sequence of $\R^N$ and $\{\lambda_n\}_{n=1}^{\infty}$ be a sequence of $\sts{0,\infty}$. If
$G$ satisfies \ref{assum:G:orbit}, then,
\begin{enumerate}
\item[] either there exist a subsequence (still denoted by $\{x_n\}_{n=1}^{\infty}$) and a sequence $\{\xi_n\}_{n=1}^{\infty}$ of $\sts{\R^N}^{G}$, such that
    \begin{equation*}
      \frac{1}{\lambda_n}\dist{G {\color{red} \cdot }  x_n}{\xi_n}\leq N_0,
    \end{equation*}
    where $N_0$ is some positive integer independent of $n$;
\item[] or for any integer $q$, there exist $\delta>0$ and elements $g_1$, $g_2$, $ \cdots $, $g_q$ of $G$, such that for $1\leq i\neq k\leq q$,
    \begin{equation*}
      \frac{1}{\lambda_n}\abs{ g_{j}x_n-g_{k}x_n }\rightarrow \infty\text{~~as~~} n\rightarrow \infty.
    \end{equation*}
\end{enumerate}
\end{lemm}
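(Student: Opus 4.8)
The plan is to reduce everything to the position of $x_n$ relative to the fixed-point subspace $V:=\sts{\R^N}^{G}$. This is a linear subspace of $\R^N$, and its orthogonal complement $W:=V^{\perp}$ contains no nonzero $G$-fixed vector (any such vector would lie in $W\cap V=\{0\}$). Let $\xi_n$ be the orthogonal projection of $x_n$ onto $V$, put $v_n:=x_n-\xi_n\in W$ and $t_n:=\abs{v_n}/\lambda_n$. Since every $g\in G$ fixes $V$ pointwise and is an isometry of $\R^N$, we have $\abs{gx_n-\xi_n}=\abs{g(x_n-\xi_n)}=\abs{v_n}$ for all $g\in G$, and $gx_n-g'x_n=gv_n-g'v_n$ for all $g,g'\in G$. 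In particular $\dist{G\cdot x_n}{\xi_n}=\abs{v_n}=\dist{x_n}{V}$, so that $\tfrac1{\lambda_n}\dist{G\cdot x_n}{\xi_n}=t_n$; the whole dichotomy is thus governed by the scalar sequence $\{t_n\}$.

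Next I would split according to whether $\{t_n\}$ admits a bounded subsequence. If $\liminf_{n}t_n<\infty$, pass to a subsequence along which $t_n\le N_0$ for some positive integer $N_0$; then the first alternative holds with the points $\xi_n\in V$ above. Otherwise $t_n\to\infty$, so $v_n\neq 0$ for large $n$, and after a further extraction $\hat v_n:=v_n/\abs{v_n}\to\hat v$ for some $\hat v\in S^{N-1}$. Since $W$ is closed, $\hat v\in W$, and $\abs{\hat v}=1$ forces $\hat v\notin V$, i.e. $\hat v$ is not fixed by all of $G$, so $G\cdot\hat v$ is not a single point. This is the one place the hypothesis \ref{assum:G:orbit} is used: it upgrades this to $\dim\sts{G\cdot\hat v}>0$, hence the orbit $G\cdot\hat v$ is an infinite set.

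To finish, fix $q$, choose distinct points $g_1\hat v,\dots,g_q\hat v\in G\cdot\hat v$, and set $2\delta:=\min_{i\neq k}\abs{g_i\hat v-g_k\hat v}>0$. For $n$ large enough that $\abs{\hat v_n-\hat v}\le\delta/2$, one estimates, for $1\le i\neq k\le q$,
\begin{equation*}
  \frac1{\lambda_n}\abs{g_ix_n-g_kx_n}
  =\frac1{\lambda_n}\abs{g_iv_n-g_kv_n}
  =t_n\,\abs{g_i\hat v_n-g_k\hat v_n}
  \ge t_n\bigl(2\delta-2\abs{\hat v_n-\hat v}\bigr)\ge\delta\,t_n\longrightarrow\infty,
\end{equation*}
where we used that the $g_i$ are isometries and that $\xi_n$ is $G$-fixed. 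This is precisely the second alternative.

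The conceptually useful step — and the one I would present most carefully — is the passage to the fixed-point subspace: it simultaneously collapses the orbit distance $\dist{G\cdot x_n}{\xi_n}$ to the elementary quantity $\dist{x_n}{V}$ and, in the unbounded regime, produces a limiting direction $\hat v$ that is provably not $G$-fixed. The hypothesis \ref{assum:G:orbit} is then indispensable: it rules out the finite-orbit obstruction (e.g.\ $G=\{\pm\mathrm{Id}\}$, for which the second alternative already fails at $q=3$) and guarantees infinitely many separated points $g_i\hat v$. The only bookkeeping subtlety is that both alternatives are to be read up to extraction of a subsequence, which is harmless for the concentration-compactness arguments in which this lemma is applied.
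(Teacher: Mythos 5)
Your proof is correct. The paper does not supply its own proof of this lemma but merely cites \cite{Clapp2016jde,CS2020cpde}; the route you take --- orthogonally projecting $x_n$ onto the fixed-point subspace $\sts{\R^N}^{G}$, reducing the dichotomy to whether $t_n:=\mathrm{dist}(x_n,\sts{\R^N}^{G})/\lambda_n$ admits a bounded subsequence, and in the unbounded case invoking \ref{assum:G:orbit} to produce a limiting direction $\hat v\notin\sts{\R^N}^{G}$ whose orbit is positive-dimensional, hence infinite, and extracting $q$ separated points $g_i\hat v$ --- is the standard one and is essentially what the cited references do. Two small remarks. First, in the unbounded case you pass to a further subsequence to make $\hat v_n\to\hat v$, so the second alternative, like the first, is obtained only up to a subsequence; the lemma as printed shows the word ``subsequence'' only in the first branch, but the whole dichotomy is to be read that way, and this is all that is used in Step~3a of the proof of \Cref{lem:ps} (a contradiction along a subsequence is still a contradiction). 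Second, the $\delta$ in the second branch is vestigial in the stated conclusion; your argument produces it naturally as half the minimal angular separation $\min_{i\neq k}|g_i\hat v-g_k\hat v|$ and in fact yields the sharper quantitative bound $\frac1{\lambda_n}|g_jx_n-g_kx_n|\geq\delta\,t_n\to\infty$. Your closing observation that $G=\{\pm\mathrm{Id}\}$ (for which \ref{assum:G:orbit} fails and the second branch breaks at $q=3$) is an apt illustration of why the hypothesis cannot be dropped.
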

Now, we are able to describe the compactness properties of Palais-Smale sequences for \eqref{eq:015}.
\begin{lemm}
\label{lem:ps}
    Let $G$ be  closed subgroup of the group $O\sts{N}$, $\Omega$ be a $G$-invariant  bounded smooth domain in $\mathbb{R^N}$
    and $\sigma:G\mapsto\Z_2$ be a continuous group homomorphism.
    Suppose the group $G$ satisfies \ref{assum:G:orbit}-\ref{assum:G:null}.
If $ \left\{u_n\right\}_{n=1}^{\infty} $ is a sequence of $\dot{H}^s\sts{{\Omega}}_{ G }^{\sigma}$ satisfying
\begin{align}
\label{ps-sqs}
   \Ecal\sts{ u_n  \, ; \,{\Omega} }\rightarrow \mathrm{m}\sts{{\Omega}}_{ G }^{\sigma},
   \text{ and }
   \nabla\Ecal\sts{ u_n  \, ; \,{\Omega} }\rightarrow 0 \text{  in  } \sts{ \dot{H}^s\sts{{\Omega}}_{G}^{\sigma} }{'}
   \text{ as } n\rightarrow \infty,
\end{align}
then, up to a subsequence (still denoted by $\{u_n\}_{n=1}^{\infty}$),
\begin{enumerate}[label=${(}${PS\arabic{*}}${)}$, ref=${(}${PS\arabic{*}}${)}$,wide, labelwidth=!, labelindent=0pt]
  \item\label{ps:compact} either there exists a nontrivial $U\in \Nscr\sts{{\Omega} }_{ G }^{\sigma}$ such that
  \begin{align*}
    \lim_{n\to\infty}\norm{ u_n-U }=0.
  \end{align*}
  \item\label{ps:noncompact} or there exist a sequence $\left\{\xi_n\right\}_{n=1}^{\infty}\subseteq \sts{\R^N}^G$,
    a sequence $\left\{\lambda_n\right\}_{n=1}^{\infty}\subseteq \sts{0,\infty}$ and
    a nontrivial function $V\in \Nscr\sts{\Hcal }_{ G }^{\sigma}$ satisfying
    \begin{equation*}
      \slaplace{V}-\abs{V}^{2^{\ast}_{s}-2}V=0, \quad\text{in } \Hcal,
    \end{equation*}
     such that
    \begin{align*}
      \lim_{n\to\infty}\norm{ u_n\sts{\cdot}-\frac{ 1 }{ \lambda_n^{\frac{ N-2s }{ 2 }} }V\sts{ \frac{ \cdot-\xi_{n} }{ \lambda_n } } }=0,
    \end{align*}
  where $\Hcal$ is either a half space of $\R^N$ or $\Hcal=\R^N$.
\end{enumerate}
\end{lemm}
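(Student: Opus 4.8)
The plan is to run a concentration-compactness (global compactness) analysis in the spirit of Struwe, adapted to the nonlocal operator $\slaplace$ and to the equivariant constraint, with \Cref{lem:geom} controlling where concentration may occur. First, the Palais--Smale conditions and the identity $\Ecal\sts{u_n\, ; \,\Omega}-\frac{1}{2^{\ast}_{s}}\action{\nabla\Ecal\sts{u_n\, ; \,\Omega}}{u_n}=\sts{\frac12-\frac{1}{2^{\ast}_{s}}}\norm{u_n}^2$ give that $\norm{u_n}$ is bounded; passing to a subsequence, $u_n\rightharpoonup u$ in $\dot{H}^s\sts{\R^N}$, $u_n\to u$ a.e.\ and in $L^p_{\mathrm{loc}}$ for $p<2^{\ast}_{s}$ (here boundedness of $\Omega$ is used), $u\in\dot{H}^s\sts{\Omega}^{\sigma}_{G}$, and passing to the limit in $\action{\nabla\Ecal\sts{u_n\, ; \,\Omega}}{\varphi}=o(1)$ and invoking \Cref{lem:weaksln}, $u$ solves \eqref{dnls}; in particular $\Ncal\sts{u\, ; \,\Omega}=0$, so $u\in\Nscr\sts{\Omega}^{\sigma}_{G}$ whenever $u\neq0$. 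Setting $v_n:=u_n-u\rightharpoonup0$, the Brezis--Lieb lemma --- for $\norm{\cdot}^2$, for $\int\abs{\cdot}^{2^{\ast}_{s}}$, and for the nonlinearity in $L^{(2^{\ast}_{s})'}$, the last together with $\nabla\Ecal\sts{u\, ; \,\Omega}=0$ --- yields $\nabla\Ecal\sts{v_n\, ; \,\Omega}\to0$ in the dual norm, whence $\norm{v_n}^2-\int_{\Omega}\abs{v_n}^{2^{\ast}_{s}}\to0$; with $\ell:=\lim\norm{v_n}^2=\lim\int_{\Omega}\abs{v_n}^{2^{\ast}_{s}}$ one also gets $\mathrm{m}\sts{\Omega}^{\sigma}_{G}=\Ecal\sts{u\, ; \,\Omega}+\sts{\frac12-\frac{1}{2^{\ast}_{s}}}\ell$. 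By \eqref{sblv:sharp} every nonzero element of $\Nscr\sts{\Omega}^{\sigma}_{G}$ has norm at least ${\mathrm{S}\sts{N,s}}^{-\frac{N}{4s}}$, so $\mathrm{m}\sts{\Omega}^{\sigma}_{G}>0$, and if $\ell>0$ then likewise $\ell\geq {\mathrm{S}\sts{N,s}}^{-\frac{N}{2s}}$.

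If $\ell=0$, then $u_n\to u$ strongly, $\Ecal\sts{u\, ; \,\Omega}=\mathrm{m}\sts{\Omega}^{\sigma}_{G}>0$ forces $u\neq0$, hence $u\in\Nscr\sts{\Omega}^{\sigma}_{G}$ --- this is \ref{ps:compact} with $U=u$. Assume henceforth $\ell>0$. Then necessarily $u=0$: otherwise $u\in\Nscr\sts{\Omega}^{\sigma}_{G}$ would give $\Ecal\sts{u\, ; \,\Omega}\geq\mathrm{m}\sts{\Omega}^{\sigma}_{G}$, contradicting $\mathrm{m}\sts{\Omega}^{\sigma}_{G}=\Ecal\sts{u\, ; \,\Omega}+\sts{\frac12-\frac{1}{2^{\ast}_{s}}}\ell$. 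Now I fix $\delta_0\in\sts{0,\min\{\ell, {\mathrm{S}\sts{N,s}}^{-\frac{N}{2s}}\}}$, to be taken small below; since $\int_{\Omega}\abs{v_n}^{2^{\ast}_{s}}\to\ell>\delta_0$, \Cref{prop:levy}\ref{levy:1} (with $\delta=\delta_0$) provides $\lambda_n>0$, $z_n\in\R^N$ with $\int_{\ball{z_n}{\lambda_n}}\abs{v_n}^{2^{\ast}_{s}}=\Qcal_{v_n}\sts{\lambda_n}=\delta_0$ and $\dist{z_n}{\Omega}\leq\lambda_n$. Apply \Cref{lem:geom} to $\{z_n\}$, $\{\lambda_n\}$: its second alternative cannot hold, for if it did, then choosing $q>\ell/\delta_0$ and using that $v_n$ is $\sigma$-equivariant (so $\abs{v_n\sts{g_jx}}=\abs{v_n\sts{x}}$, and the balls $\ball{g_jz_n}{\lambda_n}$, pairwise disjoint for large $n$, each carry $\abs{v_n}^{2^{\ast}_{s}}$-mass $\delta_0$) would give $\int_{\Omega}\abs{v_n}^{2^{\ast}_{s}}\geq q\delta_0>\ell$, a contradiction. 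Hence its first alternative holds: there is $\{\xi_n\}\subset\sts{\R^N}^{G}$ with $\lambda_n^{-1}\dist{G\cdot z_n}{\xi_n}\leq N_0$, and, after replacing $z_n$ by a suitable point of $G\cdot z_n$, we may assume $\abs{z_n-\xi_n}\leq N_0\lambda_n$.

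Next I rescale: put $w_n\sts{x}:=\lambda_n^{\frac{N-2s}{2}}v_n\sts{\lambda_nx+\xi_n}$ and $\Omega_n:=\lambda_n^{-1}\sts{\Omega-\xi_n}$. By \Cref{lem:sclivrc} (applicable since $\xi_n$ is $G$-fixed), $w_n\in\dot{H}^s\sts{\Omega_n}^{\sigma}_{G}$, $\norm{w_n}=\norm{v_n}$, $\int_{\Omega_n}\abs{w_n}^{2^{\ast}_{s}}=\int_{\Omega}\abs{v_n}^{2^{\ast}_{s}}$, and $\nabla\Ecal\sts{w_n\, ; \,\Omega_n}\to0$ in the dual norm; moreover $\Qcal_{w_n}\sts{\rho}=\Qcal_{v_n}\sts{\rho\lambda_n}\leq\delta_0$ for $0<\rho\leq1$ by \Cref{prop:levy}\ref{levy:2}. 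Passing to a subsequence, $w_n\rightharpoonup V$ in $\dot{H}^s\sts{\R^N}$ with $V$ $\sigma$-equivariant, and --- using that $\partial\Omega$ is smooth and $\xi_n$ lies within $O\sts{\lambda_n}$ of $\overline{\Omega}$ --- the blow-up domains $\Omega_n$ converge locally either to $\R^N$ or to a $G$-invariant half space $\Hcal$ with $\Hcal^{G}\neq\emptyset$; passing to the limit in the approximate equation, $\slaplace V-\abs{V}^{2^{\ast}_{s}-2}V=0$ in $\Hcal$ (with $\Hcal=\R^N$ in the first case), and testing with $V$ gives $\Ncal\sts{V\, ; \,\Hcal}=0$. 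The decisive point --- and the main obstacle --- is that $V\neq0$. Because $\Qcal_{w_n}\sts{1}\leq\delta_0< {\mathrm{S}\sts{N,s}}^{-\frac{N}{2s}}$, no mass of $\abs{w_n}^{2^{\ast}_{s}}$ can concentrate at unit scale: localizing the approximate equation with a cut-off supported in a unit ball and invoking the Caccioppoli-type inequality \Cref{lem:caccippoli} and the local Sobolev inequality \Cref{prop:lcestmt}, one shows (here the smallness of $\delta_0$ is exploited) that every atom $\nu_j$ of the weak-$\ast$ limit of $\abs{w_n}^{2^{\ast}_{s}}$ satisfies $\nu_j\geq {\mathrm{S}\sts{N,s}}^{-\frac{N}{2s}}$ or $\nu_j=0$; since also $\nu_j\leq\limsup_n\Qcal_{w_n}\sts{1}=\delta_0$, all atoms vanish, so $w_n\to V$ in $L^{2^{\ast}_{s}}_{\mathrm{loc}}\sts{\R^N}$. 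As $\int_{\ball{\sts{z_n-\xi_n}/\lambda_n}{1}}\abs{w_n}^{2^{\ast}_{s}}=\delta_0$ (again \Cref{prop:levy}\ref{levy:2}) and $\sts{z_n-\xi_n}/\lambda_n$ stays bounded, passing to the limit over a fixed ball gives $\int\abs{V}^{2^{\ast}_{s}}\geq\delta_0>0$.

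Finally I would upgrade to strong convergence. From $V\neq0$ and $\Ncal\sts{V\, ; \,\Hcal}=0$ we get $V\in\Nscr\sts{\Hcal}^{\sigma}_{G}$ and $\Ecal\sts{V\, ; \,\Hcal}=\sts{\frac12-\frac{1}{2^{\ast}_{s}}}\norm{V}^2$. A last Brezis--Lieb splitting of $\widetilde{v}_n:=w_n-V\rightharpoonup0$ gives $\norm{\widetilde{v}_n}^2\to\ell-\norm{V}^2=:\ell_1\geq0$ and $\int\abs{\widetilde{v}_n}^{2^{\ast}_{s}}\to\ell_1$, hence $\mathrm{m}\sts{\Omega}^{\sigma}_{G}=\Ecal\sts{V\, ; \,\Hcal}+\sts{\frac12-\frac{1}{2^{\ast}_{s}}}\ell_1$. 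Since $\Hcal\subseteq\R^N$, $\Hcal^{G}\neq\emptyset$, and $\Omega^{G}\neq\emptyset$ (as holds in the intended application of the lemma), \Cref{lem:hari} together with the monotonicity $\mathrm{m}\sts{\R^N}^{\sigma}_{G}\leq\mathrm{m}\sts{\Hcal}^{\sigma}_{G}$ gives $\Ecal\sts{V\, ; \,\Hcal}\geq\mathrm{m}\sts{\Hcal}^{\sigma}_{G}\geq\mathrm{m}\sts{\R^N}^{\sigma}_{G}=\mathrm{m}\sts{\Omega}^{\sigma}_{G}$, forcing $\ell_1=0$. Therefore $w_n\to V$ strongly in $\dot{H}^s\sts{\R^N}$, and undoing the rescaling --- an isometry of $\dot{H}^s$ by \Cref{lem:sclivrc} --- together with $u_n=v_n$, yields $\norm{u_n\sts{\cdot}-\frac{1}{\lambda_n^{\frac{N-2s}{2}}}V\sts{\frac{\cdot-\xi_n}{\lambda_n}}}\to0$, which is \ref{ps:noncompact}. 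To summarize, the hard part is the local $L^{2^{\ast}_{s}}$-compactness of $\{w_n\}$: the critical exponent forbids a naive Rellich argument, and one must quantitatively use that the L\'{e}vy concentration of $w_n$ at unit scale stays below the Sobolev threshold ${\mathrm{S}\sts{N,s}}^{-\frac{N}{2s}}$, which is where the nonlocality of $\slaplace$, handled via \Cref{lem:caccippoli} and \Cref{prop:lcestmt}, makes the estimates delicate; a secondary subtlety is the use of \Cref{lem:geom} to rule out several far-apart bubbles, so that a single equivariant center $\xi_n$ suffices.
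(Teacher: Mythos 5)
Your proposal is correct and follows essentially the same route as the paper: boundedness from the energy identity, dichotomy at the weak limit, L\'evy concentration together with \Cref{lem:geom} to locate an equivariant center $\xi_n\in\sts{\R^N}^G$, rescaling, and a Caccioppoli-plus-local-Sobolev argument to obtain a nontrivial limit profile $V$ on a half-space or all of $\R^N$. The differences are in organization and in some gap-filling. Where the paper shows compactness in the case $U\neq 0$ by directly comparing $\norm{U}^2\geq\frac{N}{s}\mathrm{m}\sts{\Omega}^{\sigma}_{G}$ (from $U\in\Nscr$) against $\norm{U}^2\leq\lim\norm{u_n}^2=\frac{N}{s}\mathrm{m}\sts{\Omega}^{\sigma}_{G}$ (weak lower semicontinuity), you reach the same dichotomy via an explicit Brezis--Lieb splitting $u_n=u+v_n$ and the identity $\mathrm{m}\sts{\Omega}^{\sigma}_{G}=\Ecal\sts{u\, ;\,\Omega}+\sts{\tfrac12-\tfrac{1}{2^{\ast}_{s}}}\ell$; both arguments use the same ingredients and are of comparable length. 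Your proof of $V\neq 0$ is framed as atom elimination \`a la Lions (the weak-$\ast$ limit of $\abs{w_n}^{2^{\ast}_s}$ has no atoms because each atom would exceed the Sobolev threshold yet is capped by $\delta_0$), while the paper argues by direct contradiction from $V=0$; both invoke \Cref{lem:caccippoli} and \Cref{prop:lcestmt} together with the smallness of $\delta$ (or $\delta_0$), so this is the same estimate in two different wrappers. You do add genuine value in the final step: the paper asserts $\norm{v_n-V}\to 0$ without justification, whereas you supply one through a second Brezis--Lieb splitting together with $\mathrm{m}\sts{\Hcal}^{\sigma}_{G}\geq\mathrm{m}\sts{\R^N}^{\sigma}_{G}=\mathrm{m}\sts{\Omega}^{\sigma}_{G}$; an equivalent way, closer to the paper's Step 2, is to compare $\frac{s}{N}\norm{V}^2\geq\mathrm{m}\sts{\Hcal}^{\sigma}_{G}$ with $\norm{V}^2\leq\lim\norm{v_n}^2=\frac{N}{s}\mathrm{m}\sts{\Omega}^{\sigma}_{G}$. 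You are also right to flag that this last step tacitly uses $\Omega^{G}\neq\emptyset$ (so that \Cref{lem:hari} gives $\mathrm{m}\sts{\Omega}^{\sigma}_{G}=\mathrm{m}\sts{\R^N}^{\sigma}_{G}$); this hypothesis is not stated in \Cref{lem:ps} as written, is needed in either proof, and holds in the paper's only application where $\Omega$ is a ball centered at the origin.
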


\begin{proof}
We divide the proof into several steps.
\begin{enumerate}[label=$\textit{\textbf{Step}~~}\textbf{\arabic{*}.}$, ref=$\textit{\textbf{Step}~~}\textbf{\arabic{*}.}$,listparindent=0pt,wide =0.5\parindent,]
  \item[\bf Step 1.] We claim that the sequence $\{u_n\}^{\infty}_{n=1}$ satisfying \eqref{ps-sqs} is uniformly bounded in $ \dot{H}^s\sts{{\Omega}}_{G}^{\sigma} $.
By \eqref{ps-sqs},  we have
\begin{align*}
  \so{1}\norm{u_n}
  =
  \action{ \nabla\Ecal\sts{u_n\, ; \,\Omega } }{ u_n }
  =
  \norm{u_n}^2 - \int_{\Omega}\abs{ u_n\sts{x} }^{ 2^{\ast}_{s} }\dx,
\end{align*}
we obtain that
\begin{align*}
  \frac{s}{N} \norm{u_n}^2
  =&
  { \Ecal\sts{u_n\, ; \,\Omega } }
  -
  \frac{1}{ 2^{\ast}_{s} }
  \action{ \nabla\Ecal\sts{u_n\, ; \,\Omega } }{ u_n }
  \\
  =&
  \mathrm{m}\sts{\Omega}_{G}^{\sigma} + \so{1}\norm{u_n},
\end{align*}
and
\begin{align}
\notag
\label{eq:027}
  \frac{s}{N} \int_{\Omega}\abs{ u_n\sts{x} }^{ 2^{\ast}_{s} }\dx
  =&
  { \Ecal\sts{u_n\, ; \,\Omega } }
  -
  \frac{1}{ 2 }
  \action{ \nabla\Ecal\sts{u_n\, ; \,\Omega } }{ u_n }
  \\
  =&
  \mathrm{m}\sts{\Omega}_{G}^{\sigma} + \so{1}\norm{u_n},
\end{align}
which implies that the sequence $ \{u_n\}_{n=1}^{\infty} $ is uniformly bounded in $ \dot{H}^s\sts{{\Omega}}_{G}^{\sigma} $.
Therefore, by the Rellich–Kondrachov theorem, after passing to a subsequence if necessary, we obtain that
\begin{align}
\label{eq:026}
  u_n\rightharpoonup U &\quad \text{weakly in }  \dot{H}^s\sts{{\Omega}}_{G}^{\sigma};
  \\
\label{eq:020}
  u_n\rightarrow U &\quad \text{ in }  L^p\sts{\Omega} \text{  for all  } 1\leq p< 2^{\ast}_{s};
  \\
  \notag
  u_n\rightarrow U &\quad \text{~almost everywhere in~~}  {\Omega}.
\end{align}
Hence, for each $ \varphi\in \dot{H}^s\sts{{\Omega}}_{G}^{\sigma} $, one has
\begin{equation}
\label{eq:0211}
  \iint_{\R^N\times \R^N}\frac{ \sts{ u_n\sts{x}-u_n\sts{y} }\sts{ \varphi\sts{x}-\varphi\sts{y} } }{ \abs{x-y}^{N+2s} }\dx\dy
  \rightarrow
  \iint_{\R^N\times \R^N} \frac{ \sts{ U\sts{x}-U\sts{y} }\sts{ \varphi\sts{x}-\varphi\sts{y} } }{ \abs{x-y}^{N+2s} }\dx\dy.
\end{equation}
Moreover, by \cite[Theorem A.2]{Willem1997} and  \eqref{eq:020} with $p=2^{\ast}_{s}-1$,  we obtain  that
\begin{equation*}
    \abs{u_n}^{ 2^{\ast}_{s}-2 }u_n  \rightarrow  \abs{U}^{ 2^{\ast}_{s}-2 }U \quad \text{ in }  L^1\sts{\Omega}.
\end{equation*}
Therefore, we have for any $\phi\in C_{c}^{\infty}\sts{\Omega}$ that
\begin{equation*}
  \int_{\Omega}\abs{u_n\sts{x}}^{ 2^{\ast}_{s}-2 }u_n\sts{x}\phi\sts{x}
  \rightarrow
  \int_{\Omega}\abs{U\sts{x}}^{ 2^{\ast}_{s}-2 }U\sts{x}\phi\sts{x}.
\end{equation*}
By density of $C_{c}^{\infty}\sts{\Omega}$ in $\dot{H}^s\sts{\Omega}_{G}^{\sigma}$, we obtain
for any $\varphi\in \dot{H}^s\sts{\Omega}_{G}^{\sigma}$ that
\begin{equation}
\label{eq:022}
  \int_{\Omega}\abs{u_n\sts{x}}^{ 2^{\ast}_{s}-2 }u_n\sts{x}\varphi\sts{x}  \rightarrow  \int_{\Omega}\abs{U\sts{x}}^{ 2^{\ast}_{s}-2 }U\sts{x}\varphi\sts{x},
\end{equation}
which  together with \eqref{ps-sqs} and \eqref{eq:0211} imlies for any $\varphi\in \dot{H}^s\sts{\Omega}_{G}^{\sigma}$ that
\begin{align}
\label{eq:023}
  \action{ \nabla\Ecal\sts{U\, ; \,\Omega } }{ \varphi  } &=0.
\end{align}

\item[\bf Step 2.] For the case $U\neq 0$, we claim that \ref{ps:compact} holds. In fact, on the one hand, by choosing $\varphi=U$ in \eqref{eq:023}, we obtain that
\begin{align*}
  \Ncal\sts{U\, ; \,\Omega }
  =0,
\end{align*}
which means $U\in \Nscr\sts{\Omega }_{G}^{\sigma}$. By \eqref{m:nehari} and \eqref{eq:015}, we obtain that
\begin{equation}
\label{eq:024}
  \frac{ s }{ N }\norm{U}^2
  =
  { \Ecal\sts{U\, ; \,\Omega } }-\frac{1}{ 2^{\ast}_{s} }\Ncal\sts{U\, ; \,\Omega }
  \geq
  \mathrm{m}\sts{\Omega}_{G}^{\sigma}.
\end{equation}
 On the other hand, by the weak lower semicontinuity of the norm $\|\cdot\|$,
we have
\begin{equation}
\label{eq:025}
  \norm{U}^2\leq \lim_{n\to\infty}\norm{u_n}^2 = \frac{N}{s}\mathrm{m}\sts{\Omega}_{G}^{\sigma}.
\end{equation}
Therefore,  we get
\begin{equation*}
  \lim_{n\to\infty}\norm{u_n} = \norm{U},
\end{equation*}
which together with \eqref{eq:026} implies that
\begin{equation*}
  \lim_{n\to\infty}\norm{{ u_n-U }} =0.
\end{equation*}
Hence \eqref{ps:compact} holds.

\item[\bf Step 3.] For the case $U=0$.
By \eqref{eq:027}, up to a subsequence (we still denote by $\{u_n\}_{n=1}^{\infty}$), we get
\begin{equation}\label{eq:009}
  \int_{\Omega}\abs{ u_n\sts{x} }^{ 2^{\ast}_{s} }\dx
  \geq
  \frac{ N }{ 2s }\mathrm{m}\sts{\Omega}_{G}^{\sigma}.
\end{equation}

Let us choose $\delta$ satisfying $0<\delta <\frac{ N }{ 2s }\mathrm{m}\sts{\Omega}_{G}^{\sigma}$ to be determined later.
By \Cref{prop:levy}, there exist a sequence $\{z_n\}_{n=1}^{\infty}$ of $\R^N$ and
 a sequence $\{\lambda_n\}_{n=1}^{\infty}$ of $\sts{0,+\infty}$, such that
 \begin{align*}
  \delta &=\int_{ \ball{z_n}{\lambda_n} } \abs{ u_n\sts{x} }^{ 2^{\ast}_{s} }\dx.
\end{align*}
\item[\bf Step 3a.] We claim that
 after passing to another subsequence if necessary,
 there exists a sequence $\{\xi_n\}_{n=1}^{\infty}$ of $\sts{\R^N}^{G}$
such that
\begin{equation}
\label{eq:028}
  \frac{1}{\lambda_n}\dist{G\cdot z_n}{\xi_n}\leq N_0,
\end{equation}
where the constant $N_0$ is the positive integer in \Cref{lem:geom}.
We suppose by contradiction that \eqref{eq:028} does not hold. On the one hand, by \Cref{lem:geom}, for each $q\in\N$, one can find
$q$ elements $g_1$, $g_2$, $\cdots$, $g_q$ of $G$, such that, for $n$ sufficiently large,
\begin{equation*}
  \abs{ g_j z_n- g_k z_n }\geq 2{ \lambda_n },\text{~~if~~} j\neq k,
\end{equation*}
which implies that
\begin{equation}\label{eq:029}
  \ball{ g_j z_n }{ \lambda_n }\cap \ball{ g_k z_n }{ \lambda_n }=\emptyset.
\end{equation}
On the other hand, using the fact that $\abs{u\sts{gx}}=\abs{u\sts{x}}$ for all $u\in\dot{H}^s\sts{{\Omega}}_{G}^{\sigma}$,
by the change of variables, we have,
\begin{equation}\label{eq:030}
  \int_{\ball{ g_j z_n }{ \lambda_n }}\abs{ u_n\sts{x} }^{ 2^{\ast}_{s} }\dx
  =
  \int_{ \ball{z_n}{\lambda_n} } \abs{ u_n\sts{x} }^{ 2^{\ast}_{s} }\dx,
  \text{~~for all~~} j=1,2,\cdots,q.
\end{equation}
Now, combining \eqref{eq:029} with \eqref{eq:030}, we obtain for each $q\in\N $ that,
\begin{equation*}
  q\delta
  =
  \sum_{j=1}^{q}\int_{\ball{ g_j z_n }{ \lambda_n }}\abs{ u_n\sts{x} }^{ 2^{\ast}_{s} }\dx
  \leq \int_{ {\Omega} }\abs{ u_n\sts{x} }^{ 2^{\ast}_{s} }\dx,
\end{equation*}
which contradicts with the uniform boundedness of $\int_{ {\Omega} }\abs{ u_n\sts{x} }^{ 2^{\ast}_{s} }\dx$, see \eqref{eq:027}.
Therefore, \eqref{eq:028} holds. For each $z_n$, there exist $g_n\in G$ and $\xi_n\in\sts{\R^N}^G$ such that
$\abs{ g_n z_n-\xi_n }<N_0 \lambda_n$, hence we have
\begin{equation}
\label{eq:032}
  \delta
   \leq\int_{ \ball{\xi_n}{\sts{N_0+1}\lambda_n} } \abs{ u_n\sts{x} }^{ 2^{\ast}_{s} }\dx,
\end{equation}
which implies that
\begin{equation}\label{eq:036}
  \abs{ \Omega \cap { \ball{\xi_n}{\sts{N_0+1}\lambda_n} } }>0.
\end{equation}

However, by \Cref{prop:levy}, we obtain that
\begin{equation}\label{eq:031}
\begin{aligned}[b]
  \int_{ \ball{\xi_n}{\sts{N_0+1}\lambda_n} } \abs{ u_n\sts{x} }^{ 2^{\ast}_{s} }\dx
  \leq &
  \Qcal_{u_n}\sts{ \sts{N_0+1}\lambda_n }
  \\
  \leq &
   \sts{N+1}\sts{ N_0+1 }
  \Qcal_{u_n}\sts{ \lambda_n }
  \\
  \leq &
  \sts{N+1}\sts{ N_0+1 }\delta.
\end{aligned}
\end{equation}

From now on, we consider the new sequence $\sts{v_n}_{n=1}^{\infty}$ which is defined by
\begin{equation*}
   v_{n}\sts{x}:= \lambda_n^{ \frac{ N-2s }{ 2 } }u_n\sts{ \lambda_n x+\xi_n }.
\end{equation*}
By letting ${\Omega_n}=\setcdt{ x }{ x\in \frac{1}{\lambda_n}\sts{ \Omega-\xi_n } }$ , we have $v_{n}\in \dot{H}^s\sts{\Omega_n}_{G}^{\sigma}$, and
\begin{align}
\label{eq:034}
  \norm{v_n}&=\norm{u_n},
  \\
  \notag
  \int_{ {\Omega_n} } \abs{ v_n\sts{x} }^{ 2^{\ast}_{s} }\dx
  &=\int_{\Omega} \abs{ u_n\sts{x} }^{ 2^{\ast}_{s} }\dx.
\end{align}
Moreover, by \eqref{eq:032} and \eqref{eq:031}, we have
\begin{equation}
\label{eq:033}
  \delta
  \leq
  \int_{ \ball{0}{N_0+1} } \abs{ v_n\sts{x} }^{ 2^{\ast}_{s} }\dx
  \leq
  \sts{N+1}\sts{ N_0+1 }\delta.
\end{equation}
By the Rellich-Kondrachov theorem, after passing to a subsequence if necessary, we have
\begin{align*}
  v_n\rightharpoonup V &\quad \text{weakly in~~}  \dot{H}^s\sts{\R^N}_{G}^{\sigma};
  \\
  v_n\rightarrow V &\quad \text{in~~}  L^p_{\text{loc}}\sts{\R^N} \text{  for all  } 1\leq p< 2^{\ast}_{s};
  \\
  v_n\rightarrow V &\quad \text{almost everywhere in~~}  {\R^N}.
\end{align*}
\item[\bf Step 3b.] We claim that $V\neq 0$. Arguing by contradiction, we assume that $V= 0$.
Let $h\in C_{c}^{\infty}\sts{\R^N}$ be a radially symmetric function satisfying
\begin{align*}
h\sts{x}=
  \begin{cases}
    1 , & \mbox{if } x\in \ball{0}{ N_0+1 }, \\
    0 , & \mbox{if } x\notin \ball{0}{ 2\sts{N_0+1} }.
  \end{cases}
\end{align*}
By the fact that $v_{n}\in \dot{H}^s\sts{\Omega_n}_{G}^{\sigma}$, and \eqref{eq:033}, we have
\begin{equation*}
  \Omega_n\cap \ball{0}{ N_0+1 }\neq\emptyset.
\end{equation*}
Therefore, by replacing
$ \Omega     $, $ \widetilde{\Omega}        $, $ u   $
and
$ f $
with
$ \Omega_{n} $, $  \ball{0}{ 4\sts{N_0+1} } $, $ v_n $
and
$\abs{ v_n }^{ 2^{\ast}_{s}-2 }v_n + { \nabla\Ecal\sts{v_n\, ; \,\Omega_n } }  $ respectively in \Cref{lem:caccippoli},
we obtain that
\begin{align}\label{eq:010}
  &
  \int_{ \ball{0}{4\sts{N_0+1}} }\int_{ \ball{0}{4\sts{N_0+1}} }
  \frac{
    \abs{ v_n\sts{x}h\sts{x}-v_n\sts{y}h\sts{y} }^2
  }{
    \abs{x-y}^{N+2s}
    }
  \dx\dy
  \\
  \label{eq:011}
  \leq &\;
  C_{0}
  \int_{ \ball{0}{4\sts{N_0+1}} }\int_{ \ball{0}{4\sts{N_0+1}} }
  \frac{
    \abs{ h\sts{x}-h\sts{y} }^2
  }{
    \abs{x-y}^{N+2s}
    }
  \sts{ \abs{ v_n\sts{x} }^2 + \abs{ v_n\sts{y} }^2 }
  \dx\dy
  \\
  \label{eq:012}
  &+
  C_{0}
  \sts{
    \sup_{ y\in \ball{0}{2\sts{N_0+1}} }
    \int_{ \R^N\setminus \ball{0}{4\sts{N_0+1}} }
    \frac{ \abs{ v_n\sts{x} } }{ \abs{x-y}^{N+2s} }
    \dx
  }
  \int_{ \R^N } \abs{v_n\sts{x}}h^2\sts{x}\dx
  \\
  \label{eq:013}
  &+
  C_{0}
  \int_{ \R^N } h^2\sts{x}\abs{ v_n\sts{x} }^{ 2^{\ast}_{s} }\dx
  \\
  \label{eq:014}
  &+
  C_{0}
  \abs{ \action{ \nabla\Ecal\sts{v_n\, ; \,\Omega_{n} } }{ h^2 v_n } }.
\end{align}

Since for any $x\in \ball{0}{4\sts{N_0+1}}$,
\begin{equation}
\label{eq:035}
\begin{aligned}[b]
  &
  \int_{ \ball{0}{4\sts{N_0+1}} } \frac{1}{ \abs{ x-y }^{N+2s-2} }\dy
  \\
  = &
  \int_{ \ball{-x}{4\sts{N_0+1}} } \frac{1}{ \abs{ y }^{N+2s-2} }\dy
  \\
  \leq &
  \int_{ \ball{0}{8\sts{N_0+1}} } \frac{1}{ \abs{ y }^{N+2s-2} }\dy
  \\
  \leq &
  C_{1}\sts{ N,N_0 },
\end{aligned}
\end{equation}
where $ C_{1}\sts{ N,N_0 } $ is a constant depending only on $N$ and $N_0$. Using \eqref{eq:035}, we get
\begin{align*}
  \eqref{eq:011}
  \leq &
  2C_{0}
  \norm{\nabla h}^{2}_{L^{\infty}}
  \int_{ \ball{0}{4\sts{N_0+1}} }
  \abs{ v_n\sts{x} }^2
  \sts{
  \int_{ \ball{0}{4\sts{N_0+1}} }
  \frac{
    1
  }{
    \abs{x-y}^{N+2s-2}
    }
  \dy
  }
  \dx
  \\
  \leq &
  2C_{0}
  \norm{\nabla h}^{2}_{L^{\infty}}C_{1}\sts{ N,N_0 }
  \norm{ v_n }_{ L^2\sts{ \ball{0}{4\sts{N_0+1}} } }
  \\
  = &
  \so{1},
\end{align*}
where we used that
$\lim\limits_{n\to\infty}\int_{ \ball{0}{4\sts{N_0+1}} }\abs{ v_n\sts{x}-V\sts{x} }^2\dx=0$ and $V=0$.

Since for any $x\in R^N\setminus \ball{0}{4\sts{N_0+1}}$ and any $ y\in  \ball{0}{2\sts{N_0+1}}$, we have
$\frac{ \abs{x} }{2}\leq \abs{x-y}\leq \frac{3}{2}\abs{x}$, there exists a positive constant
$ C_{2}\sts{ N,N_0 } $ depending only on $N$ and $N_0$ such that
\begin{align*}
  \int_{R^N\setminus \ball{0}{4\sts{N_0+1}}} \frac{1}{ \abs{x-y}^{ \sts{N+2s}\frac{ 2^{\ast}_{s} }{ 2^{\ast}_{s}-1 } } }\dx
  &\leq
  C_{2}\sts{ N,N_0 },
\end{align*}
which together with the H\"older inequality yields that
\begin{align*}
  \eqref{eq:012}
  = &
  C_{0}
  \sts{
    \sup_{ y\in \ball{0}{2\sts{N_0+1}} }
    \int_{ \R^N\setminus \ball{0}{4\sts{N_0+1}} }
    \frac{ \abs{ v_n\sts{x} } }{ \abs{x-y}^{N+2s} }
    \dx
  }
  \int_{ \R^N } \abs{v_n\sts{x}}h^2\sts{x}\dx
  \\
  \leq &
  C_{0}C_{2}\sts{ N,N_0 }^{\frac{2^{\ast}_{s}-1}{2^{\ast}_{s}}}
  (\int_{{\Omega_n}}\abs{ v_n\sts{x} }^{ 2^{\ast}_{s} }\dx)^{\frac{1}{2^{\ast}_{s}}}
  \int_{ \ball{0}{2\sts{N_0+1}} }\abs{ v_n\sts{x} }\dx
  \\
  = &
  \so{1},
\end{align*}
where we used the uniform boundedness of $ \int_{{\Omega_n}}\abs{ v_n\sts{x} }^{ 2^{\ast}_{s} }\dx $,
$\lim\limits_{n\to\infty}\int_{ \ball{0}{2\sts{N_0+1}} }\abs{ v_n\sts{x}-V\sts{x} }\dx=0$ and $V=0$.

Next, combining \cite[Lemma A.1]{BSY2018dcds} with \eqref{sblv:sharp} and \Cref{lem:sclivrc}, we obtain that
\begin{align*}
  \eqref{eq:014}
  = &
  \so{1}\norm{ h^2 v_n }
  =
  \so{1}\norm{ v_n }.
\end{align*}
Now, we deal with \eqref{eq:013}. By the H\"older inequality and \Cref{prop:lcestmt}, we have
\begin{align*}
  \int_{ \R^N } h^2\sts{x}\abs{ v_n\sts{x} }^{ 2^{\ast}_{s} }\dx
  \leq &
  \sts{ \int_{ \ball{0}{2\sts{N_0+1}} }\abs{ v_n\sts{x} }^{ 2^{\ast}_{s} }\dx }^{ \frac{ 2^{\ast}_{s}-2 }{ 2^{\ast}_{s} } }
  \sts{ \int_{\R^N}\abs{ h\sts{x}v_n\sts{x} }^{ 2^{\ast}_{s} }\dx  }^{ \frac{ 2 }{ 2^{\ast}_{s} } }
  \\
  \leq &
  C\sts{ N,s,2 }
  \sts{ \int_{ \ball{0}{2\sts{N_0+1}} }\abs{ v_n\sts{x} }^{ 2^{\ast}_{s} }\dx }^{ \frac{ 2^{\ast}_{s}-2 }{ 2^{\ast}_{s} } }
  \norm{ h v_n }_{\dot{H}^{s}\sts{ \ball{0}{4\sts{N_0+1}} }}^2
  \\
  \leq &
  C\sts{ N,s,2 }\sts{2\sts{ N_0+1 }\sts{N+1}\delta}^{ \frac{ 2^{\ast}_{s}-2 }{ 2^{\ast}_{s} } }
  \norm{ h v_n }_{\dot{H}^{s}\sts{ \ball{0}{4\sts{N_0+1}} }}^2.
\end{align*}
Next, by choosing
\begin{equation*}
  \delta
  =
  \min
  \left\{
    { \frac{ N }{ 2s }\mathrm{m}\sts{\Omega}^{\sigma}_{G} }
    ,
    \frac{ 1 }{ 2\sts{ N_0+1 }\sts{ N+1 } }\sts{ \frac{ 1 }{ 2 C\sts{ N,s,2 } } }^{ \frac{ 2^{\ast}_{s} }{ 2^{\ast}_{s}-2 } }
  \right\},
\end{equation*}
we have
\begin{equation*}
  \norm{ h v_n }_{\dot{H}^{s}\sts{ \ball{0}{4\sts{N_0+1}} }}^2
  =\so{1}.
\end{equation*}
Therefore,
\begin{align*}
  \int_{ \ball{0}{\sts{N_0+1}} } \abs{ v_n\sts{x} }^{ 2^{\ast}_{s} }\dx
  \leq &
  \int_{ \ball{0}{2\sts{N_0+1}} } \abs{ h\sts{x}v_n\sts{x} }^{ 2^{\ast}_{s} }  \dx
  \\
  \leq &
  \norm{ h v_n }_{\dot{H}^{s}\sts{ \ball{0}{4\sts{N_0+1}} }}^{\frac{ 2^{\ast}_{s} }{ 2 }}.
\end{align*}
which contradicts with \eqref{eq:033}. Therefore $V\neq 0$.

\item[\bf Step 3c.] Without loss of generality, one may assume, after passing to another subsequence if necessary, as $n\rightarrow \infty$

\begin{equation*}
  \xi_n\rightarrow \xi^0,
  \quad
  \lambda_n \rightarrow \lambda^0,
\end{equation*}
with
$ \xi^0\in\sts{\R^N}^G $ and $ \lambda^0\geq 0 $.
%
%
Now, we distinguish two cases according to the fact whether
$ \frac{ 1 }{ \lambda_n }\lim\limits_{n\to\infty}\dist{ \xi_n }{\partial\Omega}$ is finite
or not.
   \item[\bf Case I.] $ \frac{ 1 }{ \lambda_n }\lim\limits_{n\to\infty}\dist{ \xi_n }{\partial\Omega}=\infty $.
  In this case, by \eqref{eq:036}, we have $\xi_n\in\Omega$.
  therefore, for each compact subset $F$ of $\R^N$, there exists $n_0$ such that
  \begin{equation*}
    F\subseteq \Omega_n\quad\text{ for all } n\geq n_0.
  \end{equation*}
  On the one hand, for each $ \varphi\in  C_{c}^{\infty}\sts{\R^N}^{\sigma}_{G}$, for $n$ sufficiently large, we obtain that
  \begin{align*}
    &
    \action{ \nabla\Ecal\sts{v_n\, ; \,\R^N } }{ \varphi  }
    \\
    =
    &
    \action{ \nabla\Ecal\sts{v_n\, ; \,\Omega_n } }{ \varphi  }
    \\
    =
    &
    \int_{\R^N}\int_{\R^N} \frac{ \sts{ v_n\sts{x}-v_n\sts{y} }\sts{ \varphi\sts{x}-\varphi\sts{y} } }{ \abs{x-y}^{N+2s} }\dx\dy
    -
    \int_{\Omega_n}\abs{ v_n\sts{x} }^{ 2^{\ast}_{s}-2 }v_n\sts{x} \varphi\sts{x}\dx
    \\
    =
    &
    \int_{\R^N}\int_{\R^N} \frac{ \left[{ u_n\sts{x}-u_n\sts{y} }\right]
    \left[{
        \lambda_n^{ -\frac{N-2s}{2} }\varphi\sts{\frac{x-\xi_n}{\lambda_n} }
        -
        \lambda_n^{ -\frac{N-2s}{2} }\varphi\sts{\frac{y-\xi_n}{\lambda_n} } }
    \right]
    }{ \abs{x-y}^{N+2s} }\dx\dy
    \\
    & -
    \int_{\Omega}\abs{ u_n\sts{x} }^{ 2^{\ast}_{s}-2 }u_n\sts{x} \sts{ \lambda_n^{ -\frac{N-2s}{2} }\varphi\sts{\frac{x-\xi_n}{\lambda_n} } }\dx
    \\
    =
    &
    \action{ \nabla\Ecal\sts{u_n\, ; \,\Omega } }{ \lambda_n^{ -\frac{N-2s}{2} }\varphi\sts{\frac{\cdot-\xi_n}{\lambda_n} } }
    \\
    =
    &
    \so{1}\norm{ \lambda_n^{ -\frac{N-2s}{2} }\varphi\sts{\frac{\cdot-\xi_n}{\lambda_n} } }
    \\
    =
    &
    \so{1}\norm{ \varphi }.
  \end{align*}

  On the other hand, for each $ \varphi\in C_{c}^{\infty}\sts{\R^N}^{\sigma}_{G} $, one has
\begin{equation}
\label{eq:021}
  \iint_{\R^N\times \R^N }\frac{ \sts{ v_n\sts{x}-v_n\sts{y} }\sts{ \varphi\sts{x}-\varphi\sts{y} } }{ \abs{x-y}^{N+2s} }\dx\dy
  \rightarrow
  \iint_{\R^N\times \R^N} \frac{ \sts{ V\sts{x}-V\sts{y} }\sts{ \varphi\sts{x}-\varphi\sts{y} } }{ \abs{x-y}^{N+2s} }\dx\dy,
\end{equation}
moreover,  by \cite[Theorem A.2]{Willem1997} and \eqref{eq:020} with $p=2^{\ast}_{s}-1$, we can  obtain that
\begin{equation*}
    \abs{v_n}^{ 2^{\ast}_{s}-2 }v_n  \rightarrow  \abs{V}^{ 2^{\ast}_{s}-2 }V \quad \text{ in }  L^1_{\mathrm{loc}}\sts{\R^N},
\end{equation*}
which implies that
\begin{equation*}
  \int_{\R^N}\abs{v_n\sts{x}}^{ 2^{\ast}_{s}-2 }v_n\sts{x}\varphi\sts{x}\dx
  \rightarrow
  \int_{\R^N}\abs{V\sts{x}}^{ 2^{\ast}_{s}-2 }V\sts{x}\varphi\sts{x}\dx,
  \text{~~for all~~}\varphi\in C_{c}^{\infty}\sts{\R^N}^{\sigma}_{G}.
\end{equation*}
Therefore
\begin{equation*}
  \action{ \nabla\Ecal\sts{V\, ; \,\R^N } }{ \varphi  }=0,\text{~~for all~~}\varphi\in C_{c}^{\infty}\sts{\R^N}^{\sigma}_{G},
\end{equation*}
hence
\begin{equation*}
  \action{ \nabla\Ecal\sts{V\, ; \,\R^N } }{ \psi  }=0,\text{~~for all~~}\psi\in \dot{H}^s\sts{\R^N}_{G}^{\sigma}.
\end{equation*}
and
\begin{equation*}
  {\Ncal}\sts{V\, ; \,\R^N }=0.
\end{equation*}
So $V\in \Ncal(\R^{N})_{G}^{\sigma}$, and
$$ \norm{v_n - V}\rightarrow 0. $$


  \item[\bf Case II.] $ \frac{ 1 }{ \lambda_n }\lim\limits_{n\to\infty}\dist{ \xi_n }{\partial\Omega}=\rho $ with
  $\rho\in\sts{0,\infty}$.

When $\left\{ \xi_n \right\}_{n=1}^{\infty}\subseteq \overline{\Omega}$, let
    \begin{equation*}
      \Hcal
     :=
      \setcdt{ x\in\R^N }{ x\cdot\nu>-\rho },
    \end{equation*}
    where $\nu$ is the inward pointing unit normal to $\partial\Omega$ at $\xi^{0}$. Since $\xi^{0}\in\sts{\R^N}^G$, so is $\nu$. Thus, we have $ \Hcal$ is $G$-invariant.

When $\left\{ \xi_n \right\}_{n=1}^{\infty}\nsubseteq \overline{\Omega}$, let
    \begin{equation*}
      \Hcal
    :=
      \setcdt{ x\in\R^N }{ x\cdot\nu>\rho }.
    \end{equation*}
    Similarly, we also have $ \Hcal$ is $G$-invariant.

    In both cases, it is easy to see that $\Hcal$ is a half plane in  $\R^{N}$, therefore if $X$ is compact and $X\subseteq \Hcal$, there exists $n_{0}$ such that $X\subseteq\Omega_{n}$ for all $n\geq n_{0}$, moreover if $X$ is compact and $X\subseteq \R^{N}\setminus \Hcal$, then $X\subseteq \R^{N}\setminus \Omega_{n}$ for $n$ large enough. As $V_{n}\rightarrow V \  \text{a.e.}$ in $\R^{N}$, this implies $V\neq 0$ in $\Omega_{n}$, in particular, $V=0\  \text{a.e.}$ in $\R^{N}\setminus \Hcal$. So $V\in \dot{H}^s\sts{\Hcal}^{\sigma}_{G}$. Moreover, for each $\varphi \in C_{c}^{\infty}(\Hcal)_{G}^{\sigma}$, for $n$ sufficiently large, we have
    \begin{align*}
    &
    \action{ \nabla\Ecal\sts{v_n\, ; \,\Hcal } }{ \varphi  }
    \\
    =
    &
    \action{ \nabla\Ecal\sts{v_n\, ; \,\Omega_n } }{ \varphi  }
    \\
    =
    &
    \int_{\R^N}\int_{\R^N} \frac{ \sts{ v_n\sts{x}-v_n\sts{y} }\sts{ \varphi\sts{x}-\varphi\sts{y} } }{ \abs{x-y}^{N+2s} }\dx\dy
    -
    \int_{\Omega_n}\abs{ v_n\sts{x} }^{ 2^{\ast}_{s}-2 }v_n\sts{x} \varphi\sts{x}\dx
    \\
    =
    &
    \int_{\R^N}\int_{\R^N} \frac{ \left[{ u_n\sts{x}-u_n\sts{y} }\right]
    \left[{
        \lambda_n^{ -\frac{N-2s}{2} }\varphi\sts{\frac{x-\xi_n}{\lambda_n} }
        -
        \lambda_n^{ -\frac{N-2s}{2} }\varphi\sts{\frac{y-\xi_n}{\lambda_n} } }
    \right]
    }{ \abs{x-y}^{N+2s} }\dx\dy
    \\
    & -
    \int_{\Omega}\abs{ u_n\sts{x} }^{ 2^{\ast}_{s}-2 }u_n\sts{x} \sts{ \lambda_n^{ -\frac{N-2s}{2} }\varphi\sts{\frac{x-\xi_n}{\lambda_n} } }\dx
    \\
    =
    &
    \action{ \nabla\Ecal\sts{u_n\, ; \,\Omega } }{ \lambda_n^{ -\frac{N-2s}{2} }\varphi\sts{\frac{\cdot-\xi_n}{\lambda_n} } }
    \\
    =
    &
    \so{1}\norm{ \lambda_n^{ -\frac{N-2s}{2} }\varphi\sts{\frac{\cdot-\xi_n}{\lambda_n} } }
    \\
    =
    &
    \so{1}\norm{ \varphi }.
  \end{align*}
   On the other hand, for each $ \varphi\in C_{c}^{\infty}\sts{\Hcal}^{\sigma}_{G} $, one has
\begin{equation}
\label{eq:041}
 \iint_{\R^N\times \R^N} \frac{ \sts{ v_n\sts{x}-v_n\sts{y} }\sts{ \varphi\sts{x}-\varphi\sts{y} } }{ \abs{x-y}^{N+2s} }\dx\dy
  \rightarrow
  \iint_{\R^N\times \R^N} \frac{ \sts{ V\sts{x}-V\sts{y} }\sts{ \varphi\sts{x}-\varphi\sts{y} } }{ \abs{x-y}^{N+2s} }\dx\dy,
\end{equation}
and by \cite[Theorem A.2]{Willem1997}, we can see that
\begin{equation*}
    \abs{v_n}^{ 2^{\ast}_{s}-2 }v_n  \rightarrow  \abs{V}^{ 2^{\ast}_{s}-2 }V \quad \text{ in }  L^1_{\mathrm{loc}}\sts{\Hcal},
\end{equation*}
which implies that
\begin{equation*}
  \int_{\Hcal}\abs{v_n\sts{x}}^{ 2^{\ast}_{s}-2 }v_n\sts{x}\varphi\sts{x}\dx
  \rightarrow
  \int_{\Hcal}\abs{V\sts{x}}^{ 2^{\ast}_{s}-2 }V\sts{x}\varphi\sts{x}\dx,
  \text{~~for all~~}\varphi\in C_{c}^{\infty}\sts{\Hcal}^{\sigma}_{G}.
\end{equation*}
Therefore
\begin{equation*}
  \action{ \nabla\Ecal\sts{V\, ; \,\Hcal } }{ \varphi  }=0,\text{~~for all~~}\varphi\in C_{c}^{\infty}\sts{\Hcal}^{\sigma}_{G},
\end{equation*}
hence
\begin{equation*}
  \action{ \nabla\Ecal\sts{V\, ; \,\Hcal } }{ \psi  }=0,\text{~~for all~~}\psi\in \dot{H}^s\sts{\Hcal}_{G}^{\sigma}.
\end{equation*}
and
\begin{equation*}
  {\Ncal}\sts{V\, ; \,\Hcal }=0.
\end{equation*}
So $V\in \Ncal(\Hcal)_{G}^{\sigma}$,
we obtain that $$ \norm{v_n - V}\rightarrow 0. $$

After a change of variable, we have
\begin{equation}
\begin{aligned}
\label{eq:042}
 &\parallel u_{n}(x)-\lambda_{n}^{\frac{-(N-2s)}{2}}V(\frac{x-\xi_{n}}{\lambda_{n}})\parallel^{2}\\
 &=\iint_{\R^{N}\times \R^N}\frac{\mid u_{n}(x)-u_{n}(y)-\lambda_{n}^{\frac{-(N-2s)}{2}}V(\frac{x-\xi_{n}}{\lambda_{n}})+\lambda_{n}^{\frac{-(N-2s)}{2}}V(\frac{y-\xi_{n}}{\lambda_{n}})\mid^{2}}{\mid x-y\mid^{N+2s}}\dx\dy\\
 &=\iint_{\R^{N}\times \R^N}\frac{\mid \lambda_{n}^{\frac{N-2s}{2}}u_{n}(\lambda_{n}x+\xi_{n})-\lambda_{n}^{\frac{N-2s}{2}}u_{n}(\lambda_{n}y+\xi_{n})-V(x)+V(y)\mid^{2}}{\mid x-y\mid^{N+2s}}\dx\dy\\
 &=\iint_{\R^{N}\times \R^N}\frac{\mid v_{n}(x)-v_{n}(y)-V(x)+V(y)\mid^{2}}{\mid x-y\mid^{N+2s}}\dx\dy\\
 &=\parallel v_{n}-V \parallel^{2},
\end{aligned}
\end{equation}
therefore we obtain that
 \begin{align*}
      \lim_{n\to\infty}\norm{ u_n\sts{\cdot}-\frac{ 1 }{ \lambda_n^{\frac{ N-2s }{ 2 }} }V\sts{ \frac{ \cdot-\xi_{n} }{ \lambda_n } } }=0,
    \end{align*}
  \end{enumerate}
 which completes the proof.
\end{proof}

\section{Entire nodal solutions}
\label{sec:mainresult}
In this section we prove the main result.
\begin{lemm}
\label{lem:term}Let $G$ be a closed subgroup of $O(N)$ and $\sigma : G\rightarrow Z_{2}$ be a continuous homomorphism  and is surjective, and the group $G$ satisfies \ref{assum:G:orbit}-\ref{assum:G:null}. Then $\Ecal\sts{ u \, ; \,\R^{N}}$ attains its minimum on $\Nscr\sts{\R^{N}}^{\sigma}_{G}$. Consequently, the problem \eqref{nls} has a nontrival $\sigma$-equivariant solution in  $  \dot{H}^s\sts{\R^N}$.
\end{lemm}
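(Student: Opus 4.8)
The strategy is to realize $\mathrm{m}(\R^N)_G^\sigma$ as an attained minimum by transferring the problem to a bounded $G$-invariant domain, extracting there a Palais--Smale sequence from the minimax characterization, and then invoking the compactness dichotomy of \Cref{lem:ps}. I would take $\Omega=B(0,1)$, which (as $G\subseteq O(N)$) is a bounded $G$-invariant domain with smooth boundary and satisfies $0\in\Omega^G$, so $\Omega^G\neq\emptyset$. By \Cref{lem:ifntdim} (which uses \ref{assum:G:null}) the space $\dot H^s(\Omega)_G^\sigma$ is nontrivial; scaling any nonzero element of it onto the Nehari set shows $\Nscr(\Omega)_G^\sigma\neq\emptyset$, and together with \eqref{sblv:sharp} this gives $0<\mathrm{m}(\Omega)_G^\sigma<\infty$. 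By \Cref{lem:hari}, $\mathrm{m}(\Omega)_G^\sigma=\mathrm{m}(\R^N)_G^\sigma$, and \Cref{hari:cd} then produces a sequence $\{u_n\}\subseteq\dot H^s(\Omega)_G^\sigma$ with $\Ecal(u_n;\Omega)\to\mathrm{m}(\Omega)_G^\sigma$ and $\nabla\Ecal(u_n;\Omega)\to0$. By the computation carried out in the first step of the proof of \Cref{lem:ps}, $\{u_n\}$ is bounded in $\dot H^s(\Omega)_G^\sigma$ and $\tfrac{s}{N}\|u_n\|^2\to\mathrm{m}(\R^N)_G^\sigma$.

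Feeding $\{u_n\}$ into \Cref{lem:ps} gives two alternatives. In alternative \ref{ps:compact} one obtains a nontrivial $U\in\Nscr(\Omega)_G^\sigma$ with $\|u_n-U\|\to0$, hence $\|U\|^2=\lim_n\|u_n\|^2=\tfrac{N}{s}\mathrm{m}(\R^N)_G^\sigma$; since $\Ncal(U;\Omega)=0$ this gives $\Ecal(U;\Omega)=\tfrac{s}{N}\|U\|^2=\mathrm{m}(\R^N)_G^\sigma$. In alternative \ref{ps:noncompact} one obtains $\{\xi_n\}\subseteq(\R^N)^G$, $\{\lambda_n\}\subseteq(0,\infty)$ and a nontrivial $V\in\Nscr(\Hcal)_G^\sigma$, with $\Hcal$ a half-space of $\R^N$ or $\Hcal=\R^N$, such that $\|u_n-\lambda_n^{-(N-2s)/2}V((\cdot-\xi_n)/\lambda_n)\|\to0$; since the $\dot H^s$-norm is invariant under this rescaling (\Cref{lem:sclivrc}), the quantity $\|\lambda_n^{-(N-2s)/2}V((\cdot-\xi_n)/\lambda_n)\|=\|V\|$ is independent of $n$, so we again get $\|V\|^2=\lim_n\|u_n\|^2=\tfrac{N}{s}\mathrm{m}(\R^N)_G^\sigma$, and $\Ncal(V;\Hcal)=0$ then gives $\Ecal(V;\Hcal)=\tfrac{s}{N}\|V\|^2=\mathrm{m}(\R^N)_G^\sigma$.

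In either case let $W$ denote the resulting function ($W=U$ or $W=V$); it is supported in a $G$-invariant set $D$ with smooth boundary ($D=\Omega$, $D=\Hcal$, or $D=\R^N$), so $W\in\dot H^s(D)\subseteq\dot H^s(\R^N)$, and because $D$ is $G$-invariant and $W$ vanishes a.e.\ outside $D$, the identity $W(gx)=\sigma(g)W(x)$ propagates from $D$ to all of $\R^N$; thus $W\in\dot H^s(\R^N)_G^\sigma$. Moreover $\|W\|$ and $\int_{\R^N}|W|^{2^\ast_s}$ are unchanged when the domains of integration are restricted to $D$, so $\Ncal(W;\R^N)=\Ncal(W;D)=0$ and $\Ecal(W;\R^N)=\Ecal(W;D)=\mathrm{m}(\R^N)_G^\sigma$. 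Hence $W\in\Nscr(\R^N)_G^\sigma$ realizes $\mathrm{m}(\R^N)_G^\sigma$, which proves the first assertion. (It is not necessary to exclude the half-space case in \ref{ps:noncompact} by hand: it still produces a minimizer on $\Nscr(\R^N)_G^\sigma$; a posteriori the solution constructed below cannot vanish on a half-space, by unique continuation for $(-\Delta)^s$, so in fact $\Hcal=\R^N$, but this is not needed here.)

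Finally, since $W\in\Nscr(\R^N)_G^\sigma$ attains $\mathrm{m}(\R^N)_G^\sigma$, \Cref{lem:natcstr} applied with $\Omega=\R^N$ yields $\langle\nabla\Ecal(W;\R^N),u\rangle=0$ for every $u\in\dot H^s(\R^N)_G^\sigma$; in particular this holds for all $\varphi\in C_c^\infty(\R^N)_G^\sigma$, so \Cref{lem:weaksln} with $\Omega=\R^N$ upgrades it to $\langle\nabla\Ecal(W;\R^N),\widetilde\varphi\rangle=0$ for every $\widetilde\varphi\in\dot H^s(\R^N)$; that is, $W$ is a weak solution of \eqref{nls}, and since $W\neq0$ and is $\sigma$-equivariant it is the asserted nontrivial $\sigma$-equivariant solution. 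The part I expect to require the most care is the energy bookkeeping in alternative \ref{ps:noncompact}: one must combine the exact scaling invariance of $\|\cdot\|$ with the characterization $\dot H^s(\Hcal)=\{u\in\dot H^s(\R^N):u=0 \text{ a.e.\ on } \R^N\setminus\Hcal\}$ to transport the limiting profile $V$ back to $\R^N$ without altering its energy and while keeping $\sigma$-equivariance on all of $\R^N$.
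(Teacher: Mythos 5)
Your proof is correct and follows the same route as the paper's: take $\Omega=B(0,1)$, transfer the minimization level via \Cref{lem:hari}, extract a Palais--Smale sequence via \Cref{hari:cd}, and apply the dichotomy of \Cref{lem:ps} together with the inclusion $\Nscr(D)_G^\sigma\subseteq\Nscr(\R^N)_G^\sigma$ for $G$-invariant $D$. You supply slightly more detail than the paper --- in particular the energy bookkeeping in the noncompact alternative and the final passage from constrained minimizer to weak solution via \Cref{lem:natcstr} and \Cref{lem:weaksln} --- but the underlying argument is the same.
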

\begin{proof}
The unit ball $\Omega := \{x\in \R^{N}: \mid x\mid<1\}$ is $G$-invariant for every $G$, as $0\in\Omega$, we have that $\Omega^{G}\neq \emptyset$. Then, by \Cref{lem:hari}, we know $\mathrm{m}\sts{\Omega}_{ G }^{\sigma} = \mathrm{m}\sts{\R^N}_{ G }^{\sigma}.$

By \Cref{hari:cd}, there exists a subsequence $\{u_{n}\}_{n=1}^{\infty}$ satisfying
\begin{align*}
   \Ecal\sts{ u_n  \, ; \,\Omega }\rightarrow \mathrm{m}\sts{\Omega}_{ G }^{\sigma},
   \text{ and }
   \nabla\Ecal\sts{ u_n  \, ; \,\Omega }\rightarrow 0 \text{  in  } \sts{ \dot{H}^s\sts{\Omega}_{G}^{\sigma} }{'}
   \text{ as } n\rightarrow \infty.
\end{align*}
Then \Cref{lem:ps} asserts that there are two possibilities :
\begin{enumerate}[label=\textbf{Case } \textbf{\roman*}., ref=\textbf{Case } \textbf{\roman*.},]

\item\label{lemm:nehair5} there exists $u\in \Nscr\sts{\Omega}^{\sigma}_{G}$, such that $\Ecal\sts{ u \, ; \,\Omega} =\mathrm{m}\sts{\Omega}_{ G }^{\sigma}.$

\item\label{lemm:nehair6} there exists $V\in \Nscr\sts{\Hcal}^{\sigma}_{G}$, such that $\Ecal\sts{ V \, ; \,\Hcal} =\mathrm{m}\sts{\Hcal}_{ G }^{\sigma}. $
\end{enumerate}

As $\Nscr\sts{\Theta}_{G}^{\sigma}\subset \Nscr\sts{\R^{N}}_{G}^{\sigma}$ for every $G$-invariant domain $\Theta$ in $\R^{N}$, from the above two possibilities we can get $\Ecal\sts{\cdot \, ; \,\R^{N}} $ attains its minimum on $\Nscr\sts{\R^{N}}_{G}^{\sigma}.$
\end{proof}

In order to prove Theorem \ref{thm:main},  it suffices to show that there are $n$ groups with the properties stated in the following lemma.
 \begin{lemm}
 \label{lem:neharic}
  Let $N=4n+m$ with $n\geq1$ and $m\in\{0,1,2,3\}$, then for each $j=1,\cdot\cdot\cdot,n$, there exists a closed subgroup $G_{j}$ of $O(N)$ and a continuous homomorphism $\sigma_{j} : G_{j}\rightarrow Z_{2}$ with the followinng properties :
 \begin{enumerate}[label={(}\texttt{\alph{*}}{)}, ref={(}{\alph{*}}{)}]
 \item\label{lemm:hari2h}$G_{j}$ and $\sigma_{j}$ satisfy \ref{assum:G:orbit}-\ref{assum:G:null}.
 \item\label{lemm:hari3h}If $u,v : \R^{N}\rightarrow \R$ are nontrivial functions, $u$ is $\sigma_{i}$-equivariant and $v$ is $\sigma_{j}$-equivariant with $i\neq j$, then $u\neq v$.
 \end{enumerate}
 \end{lemm}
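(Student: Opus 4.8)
The plan is to realize all $n$ equivariance types with a \emph{single} group and $n$ different sign characters, which makes part (b) essentially tautological. Write $\mathbb{R}^N = L_1 \oplus \cdots \oplus L_{2n} \oplus \mathbb{R}^m$ with each $L_k$ a copy of $\mathbb{R}^2$, set $a := \lceil \log_2(n+1)\rceil$ (so that $a \le n$ and $2^a-1 \ge n$), and let $P \cong (\mathbb{Z}_2)^a$ be the subgroup of $O(N)$ generated by the $a$ commuting involutions $t_1,\dots,t_a$, where $t_r$ exchanges the planes $L_{2r-1}$ and $L_{2r}$ (legitimate since $2a \le 2n$) and fixes everything else. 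Let $T := SO(2)^{2n}$ act block-diagonally on $L_1 \oplus \cdots \oplus L_{2n}$ and trivially on $\mathbb{R}^m$. Since $P$ normalizes $T$, the semidirect product $G := T \rtimes P$ is a compact, hence closed, subgroup of $O(N)$, and I would take $G_j := G$ for every $j=1,\dots,n$.

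For the homomorphisms I would choose $n$ pairwise distinct \emph{nontrivial} characters $\bar\sigma_1,\dots,\bar\sigma_n \colon P \to \mathbb{Z}_2$ --- available because $P$ has exactly $2^a-1 \ge n$ of them --- and define $\sigma_j := \bar\sigma_j \circ \mathrm{pr}_P$, i.e.\ $\sigma_j$ is trivial on $T$ and equals $\bar\sigma_j$ on $P$. Each $\sigma_j$ is a continuous (locally constant) surjective homomorphism, which settles surjectivity and makes \Cref{lem:term} applicable. Then (b) is immediate: if $u$ were a nontrivial function that is simultaneously $\sigma_i$-equivariant and $\sigma_j$-equivariant with $i\ne j$, pick $\pi\in P$ with $\bar\sigma_i(\pi)\ne\bar\sigma_j(\pi)$, say $\sigma_i(\pi)=1$ and $\sigma_j(\pi)=-1$; then $u(x)=u(\pi x)=-u(x)$ for all $x$, forcing $u\equiv 0$, a contradiction. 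Hence no nontrivial $\sigma_i$-equivariant function equals a nontrivial $\sigma_j$-equivariant function.

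It remains to verify (a), i.e.\ \ref{assum:G:orbit}-\ref{assum:G:null} for $G$. For \ref{assum:G:orbit}: if $x=(x_1,\dots,x_{2n},z)$ has some $x_k\ne 0$, then the $SO(2)$-factor of $T$ rotating $L_k$ sweeps out a circle inside $G\cdot x$, so $\dim(G\cdot x)\ge 1$; and if every $x_k=0$ then $G$ fixes $x$, so $G\cdot x=\{x\}$. For \ref{assum:G:null}: take $\xi=(\xi_1,\dots,\xi_{2n},0)$ with $0<|\xi_1|<\cdots<|\xi_{2n}|$; any $(r;p)\in G_\xi$ must satisfy $|\xi_{p^{-1}(k)}|=|\xi_k|$ for all $k$, whence $p=e$ since the norms are distinct, and then $r_k\xi_k=\xi_k$ with $\xi_k\ne 0$ forces $r_k=e$, so $G_\xi=\{e\}$ and $\sigma_j(G_\xi)=\{1\}$. (As a byproduct, each solution obtained from $(G,\sigma_j)$ via \Cref{lem:term} is non-radial, since it changes sign under some $t_r\in O(N)$ while a radial function is $O(N)$-invariant; combined with (b) this produces the $n$ distinct non-radial sign-changing solutions of \Cref{thm:main}.)

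The step I expect to require the most care is the bookkeeping showing that the hypothesis $N=4n+m$ is exactly what is needed: one wants $2n$ coordinate planes so that $(\mathbb{Z}_2)^{\lceil\log_2(n+1)\rceil}$ embeds in $S_{2n}$ \emph{and} still supplies at least $n$ distinct nontrivial characters, and one must keep the continuous torus $T$ inside $G$ precisely so that \ref{assum:G:orbit} is not destroyed by the finite part $P$ (a purely finite group has orbits that are finite sets, generally with more than one point, which would violate \ref{assum:G:orbit}). Once those two points are arranged, everything else --- closedness of $G$ in $O(N)$, continuity of the $\sigma_j$, and the stabilizer computation --- is routine.
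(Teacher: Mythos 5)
Your construction is correct but genuinely different from the paper's, and arguably slicker. The paper uses a \emph{nested family} of distinct groups $G_j = \Gamma^j \times \Lambda_j$ with $\Lambda_j = O(N-4j)$ (or trivial when $j=n$), where $\Gamma$ acts on $\mathbb{C}^2$ through rotations and a conjugation-like involution $\varrho$ with $\sigma(\varrho)=-1$; the homomorphism $\sigma_j$ is the product of the $\Gamma$-signs across the $j$ copies, and part~(b) is proved by observing that for $i<j$ the element $\varrho_j$ lies both in the $j$-th $\Gamma$-factor of $G_j$ (odd sign) and in the $O(N-4i)$-factor of $G_i$ (trivial sign), forcing any common function to vanish. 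You instead fix a \emph{single} group $G = SO(2)^{2n} \rtimes (\mathbb{Z}_2)^a$ acting on $2n$ coordinate planes, use the torus to guarantee \ref{assum:G:orbit}, use a generic fixed point with trivial stabilizer for \ref{assum:G:null}, and feed $n$ pairwise distinct nontrivial characters $\bar\sigma_j$ of the finite part through the quotient map. With a single group, part~(b) collapses to the elementary fact that a nonzero function cannot transform under two distinct $\mathbb{Z}_2$-characters of the same group. Both proofs go through, and the bookkeeping you flag is correct ($\lceil\log_2(n+1)\rceil \le n$ and $2^{\lceil\log_2(n+1)\rceil}-1\ge n$ for all $n\ge 1$); what your version buys is a cleaner and more symmetric part~(b), while the paper's version has the feature that the $G_j$ are tailored so that each $\sigma_j$-equivariant function is automatically invariant under a large orthogonal subgroup on the remaining coordinates, which can be convenient elsewhere. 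One small remark: the lemma as stated only needs the $n$ pairs $(G_j,\sigma_j)$ to produce pairwise distinct equivariance classes, so choosing them all equal as a group (as you do) is perfectly admissible.
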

 \begin{proof}
 Let $\Gamma$ be the group generated by $\{e^{i\theta}, \varrho : \theta \in[0,2\pi)\}$, acting on $\C^{2}$ by
 $$ e^{i\theta}(\zeta_{1},\zeta_{2}):= (e^{i\theta}\zeta_{1},e^{i\theta}\zeta_{2}),\quad \varrho(\zeta_{1},\zeta_{2}):= (-\overline{\zeta}_{2},\overline{\zeta}_{1})\quad\text{for}\ (\zeta_{1},\zeta_{2})\in \C^{2}, $$
 and let $\sigma: \Gamma\rightarrow\Z_{2}$ be the homomorphism given by $\sigma(e^{i\theta}):=1$ and $\sigma(\varrho):= -1$. Note that the $\Gamma$-orbit of a point $z\in \C^{2}$ is the union of two circles that lie in orthogonal planes if $z\neq 0$, and it is ${0}$ if $z=0$.

 Define $\Lambda_{j}:=O(N-4j)$ if $j=1,\cdot\cdot\cdot,n-1$ and $\Lambda_{n}:= 1$, then $\Lambda_{j}$-orbit of a point $y\in \R^{N-4j}$ is an $(N-4j-1)$-dimensional sphere if $j=1,\cdot\cdot\cdot,n$, and it is a single point if $j=n$.

 Define $G_{j}=\Gamma_{j}\times\Lambda_{j}$, acting coordinatewise on $\R^{N}\equiv (\C^{2})^{j}\times \R^{N-4j}$, i.e.,
 \begin{equation}
 (\gamma_{1},\cdot\cdot\cdot,\gamma_{j},\eta)\sts{z_{1},\cdot\cdot\cdot,z_{j},y}^{\intercal}=\sts{\gamma_{1}z_{1},\cdot\cdot\cdot,\gamma_{j}z_{j},\eta y}^{\intercal},
 \end{equation}
where $\gamma_{i}\in\Gamma$, $\eta\in\Lambda_{j}$, $z_{i}\in\C^{2}$ and $y\in \R^{N-4j}$, and $\sigma_{j}: G_{j}\rightarrow\Z_{2}$ be the homomorphism
\begin{equation}
\sigma_{j}(\gamma_{1},\cdot\cdot\cdot,\gamma_{j},\eta)=\sigma(\gamma_{1})\sigma(\gamma_{2})\cdots\sigma(\gamma_{j}).
\end{equation}
Obviously $\sigma_{j}$ is surjective.

 We firstly show that  $G_{j}$ and $\sigma_{j}$ satisfy \ref{assum:G:orbit}-\ref{assum:G:null}. On one hand, let $x=(x_{1},\cdot\cdot\cdot,x_{j},y)\in \R^{N}$, and if $G_{j}x\neq  \{x\}$, there exists $(\gamma_{1},\cdot\cdot\cdot,\gamma_{j},\eta)$ and $(\beta_{1},\cdot\cdot\cdot,\beta_{j},\alpha)\in G_{j}$ such that
\begin{equation}
(\gamma_{1},\gamma_{2},\cdot\cdot\cdot,\gamma_{j},\eta)x\neq(\beta_{1},\beta_{2},\cdot\cdot\cdot,\beta_{j},\alpha)x\in G_{j}{ \cdot }x,
\end{equation}
therefore $\dim(G_{j} { \cdot } x)>0$ and
\begin{align*}
\dim\sts{G_{j}  { \cdot} x}=
  \begin{cases}
   N-2j-1 , & \mbox{if } j=1,\dots,n-1, \\
   2j+1 , & \mbox{if } j=n.
  \end{cases}
\end{align*}

On the other hand, let $\xi=(\xi_{1},\cdot\cdot\cdot,\xi_{j},0)$, suppose $g\xi=\xi$, it is easy to see
$$g {=} \left(\e^{\i k_{1}\theta}\varrho^{4n_{1}},\e^{\i k_{2}\theta}\varrho^{4n_{2}},\cdot\cdot\cdot,e^{\i k_{j}\theta}\varrho^{4n_{j}},1\right),$$
where $k_{j}\in\mathbb{N}$, $n_{j}\in\mathbb{N}$ and $\theta=0$.

Therefore, we obtain that
\begin{align*}
\sigma_{j}(g) =\sigma(\e^{\i k_{1}\theta}\varrho^{4n_{1}})\sigma(\e^{\i k_{2}\theta}\varrho^{4n_{2}})\cdots\sigma(\e^{\i k_{j}\theta}\varrho^{4n_{j}})
  =1.
\end{align*}
Therefore, \ref{assum:G:null} hold.

Secondly, we prove \ref{lemm:hari3h}. Suppose $u\neq 0$ is $\sigma_{i}$-equivariant and 
$v\neq 0$ is $\sigma_{j}$-equivariant with $i<j$, 
and $u(x)=v(x)$ for some $x=(z_{1},\cdot\cdot\cdot,z_{j},y)\in(\C^{2})^{j}\times \R^{N-4j}$. Then
\begin{align*}
&
u(z_{1},\cdot\cdot\cdot,\varrho_{j}z_{j},y)\\
=&u(z_{1},\cdot\cdot\cdot,z_{j},y)\\
=&v(z_{1},\cdot\cdot\cdot,z_{j},y)\\
=&-v(z_{1},\cdot\cdot\cdot,\varrho_{j}z_{j},y),
\end{align*}
which implies that $u(z_{1},\cdot\cdot\cdot,\varrho_{j}z_{j},y)\neq v(z_{1},\cdot\cdot\cdot,\varrho_{j}z_{j},y)$.
Therefore $u\neq v$.
 \end{proof}

We are ready to prove Theorem \ref{thm:main}.
\begin{proof}[Proof of \Cref{thm:main}]
Let $N=4n+m$ with $n\geq1$ and $m\in\{0,\cdot\cdot\cdot,3\}$, for each $j=1,\cdot\cdot\cdot,n$, let $G_{j}$ be the closed subgroup of $O(N)$ and $\sigma_{j}$ be the continuous homomorphisn given by \Cref{lem:neharic}. The \Cref{lem:term} tell us that there  be $\sigma_{j}$-equivariant solution $u_{j}$ to the problem \eqref{nls}. \Cref{lem:neharic} shows that the solutions $u_{1},\cdot\cdot\cdot,u_{n}$ are pairwise distinct.
\end{proof}

\end{document}